\providecommand{\tabularnewline}{\\}
\newcommand{\lyxdot}{.}
\newtheorem{theorem}{Theorem}[section]
\newtheorem{remark}[theorem]{Remark}
\newtheorem{proposition}[theorem]{Proposition}
\newtheorem{hyp}[theorem]{Hypothesis}
\newcommand{\norm}[1]{\left\Vert #1\right\Vert}
\begin{document}

\begin{frontmatter}{}

\title{An explicit asymptotic preserving low Froude scheme for the multilayer
shallow water model with density stratification}

\author[imt-insa]{F.~Couderc}

\ead{couderc@math.univ-toulouse.fr}

\author[imt-insa]{A.~Duran}

\ead{aduran@math.univ-toulouse.fr}

\author[imt-insa]{J.-P.~Vila\corref{cor1}}

\ead{vila@insa-toulouse.fr}

\cortext[cor1]{Corresponding author}

\address[imt-insa]{Institut de Mathématiques de Toulouse; UMR5219, Université de Toulouse;
CNRS, INSA, F-31077 Toulouse, France.}
\begin{abstract}
We present an explicit scheme for a two-dimensional multilayer shallow
water model with density stratification, for general meshes and collocated
variables. The proposed strategy is based on a regularized model where
the transport velocity in the advective fluxes is shifted proportionally
to the pressure potential gradient. Using a similar strategy for the
potential forces, we show the stability of the method in the sense
of a discrete dissipation of the mechanical energy, in general multilayer
and non-linear frames. These results are obtained at first-order in
space and time and extended using a simple second-order MUSCL extension.
With the objective of minimizing the diffusive losses in realistic
contexts, sufficient conditions are exhibited on the regularizing
terms to ensure the scheme's linear stability at first and second-order
in time and space. The other main result stands in the consistency
with respect to the asymptotics reached at small and large time scales
in low Froude regimes, which governs large-scale oceanic circulation.
Additionally, robustness and well-balanced results for motionless
steady states are also ensured. These stability properties tend to
provide a very robust and efficient approach, easy to implement and
particularly well suited for large-scale simulations. Some numerical
experiments are proposed to highlight the scheme efficiency: an experiment
of fast gravitational modes, a smooth surface wave propagation, an
initial propagating surface water elevation jump considering a non
trivial topography, and a last experiment of slow Rossby modes simulating
the displacement of a baroclinic vortex subject to the Coriolis force. 
\end{abstract}
\begin{keyword}
multilayer shallow water \sep asymptotic preserving scheme \sep
non-linear stability \sep energy dissipation. 
\end{keyword}

\end{frontmatter}{}

\section{Introduction}

The study of geophysical phenomena involves three-dimensional and
turbulent free surface flows with complex geometries. Numerical simulation
of such flows still remains a very demanding challenge, continuously
motivated by environmental, security or economic issues. Since the
past decades, substantial advances have been realized in terms of
mathematical modelling to reduce the original primitive equations
complexity, leading to the emergence of \textit{shallow water} models.
In the particular case of oceans, the density stratification, which
is mainly related to the temperature and salinity variations, can
profoundly affect the water flow dynamics. Taking these aspects under
consideration, the inviscid multilayer shallow water model, which
involves an arbitrary number of superposed immiscible layers, offers
a simple way to integrate the vertical density distribution with a
satisfactory time computation request. The model presented in this
work thus corresponds to a vertical discretization of the primitive
equations, where the flow is described through a superposition of
layers with constant density, as detailed in \citep{Vallis2006},
and shown in Fig.\ref{fig:multilayer-sw-sketch}. One should note
that, thanks to a general formulation of the pressure law, the model
and associated numerical scheme presented in this work has a larger
applicability range, possibly unrelated to large-scale oceanic circulation.
Let us mention for instance the single-layer case, with specific one
or two-dimensional applications to hydraulic or coastal engineering,
or the Euler equations for gas dynamics.

\begin{figure}[!tbh]
\begin{centering}
\includegraphics[height=5cm]{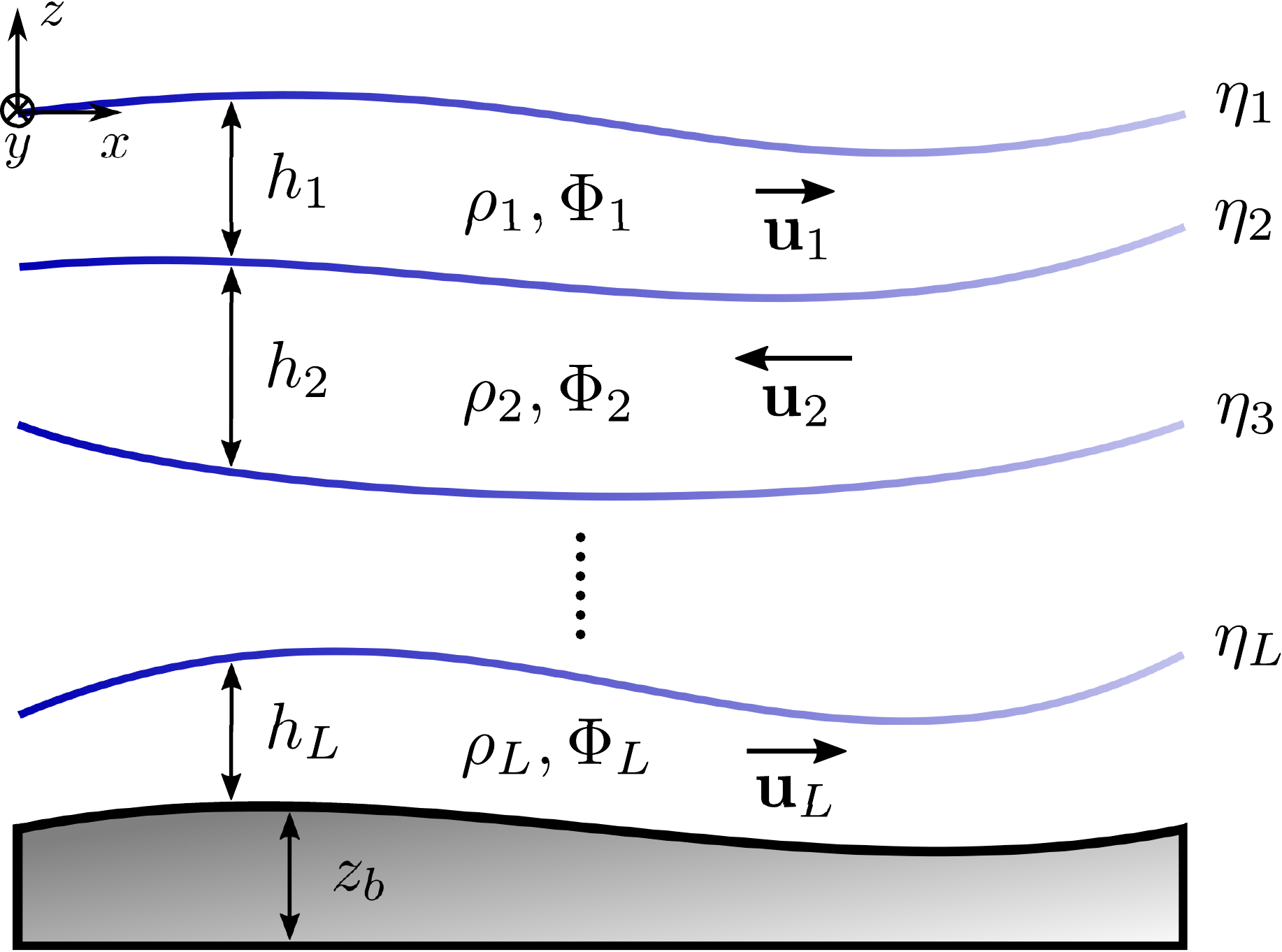} 
\par\end{centering}
\caption{Multilayer shallow water model with density stratification sketch.
$h_{i},u_{i},\rho_{i}$ respectively stand for the water height, horizontal
velocity field and density of the fluid in the \textit{i-th} layer.
$\eta_{i}=z_{b}+\sum_{k=i}^{^{L}}h_{k}$ is the water surface elevation
with respect to the bottom topography $z_{b}$, and the effective
mass in each layer is $H_{i}=h_{i}\rho_{i}$. All the model variables
are collocated along the $z$ coordinate.\label{fig:multilayer-sw-sketch}}
\end{figure}

Naturally, allowing an arbitrary number of layers confers a much more
complex nature to the flow. Indeed, in addition to non-linearities,
it is a known fact that the multilayer equations exhibit particular
structural properties, making the system theoretically and numerically
more demanding. For instance, the hyperbolic structure can be violated
if the shear velocity between two layers is too high, possibly leading
to Kelvin-Helmholtz instabilities. Preventing the complex eigenvalues
appearance is a quite complicated task, and this possible local hyberbolicity
loss can significantly reduce the application range of the numerical
schemes. These stability conditions are rigorously characterized in
\citep{Monjarret_PHD}, where a general criterion of hyperbolicity
and local well-posedness is given, under a particular asymptotic regime
and weak stratification assumptions of the densities and the velocities.
A similar study has been realized in \citep{Duchene2016} in the limit
of small density contrast. It is shown that, under reasonable conditions
on the flow, the system is well-posed on a large time interval. A
second notable difficulty comes from the pressure law, introducing
a non conservative coupling between the layers in the general case. 

As a consequence, if a large range of approaches devoted to the single
layer case are available in the literature, with the handling of complex
geometries and rugged topography using unstructured environments (\citep{Bollermann2013},
\citep{Hou2013}, \citep{Nikolos2009}), robust treatment of friction
forces with wetting and drying (\citep{Burguete2008}, \citep{Cea2012},
\citep{Murillo2012}), and allowing high order resolutions (\citep{Duran2015},
\citep{Xing2013}, \citep{Meister2016}), the quantity of advances
concerning the multilayer system is less plentiful. Nevertheless,
when the number of layers is restricted to two, several techniques
have been proposed on the basis of classical non-linear stability
criteria, generally borrowed from the advances made on the single
layer system. Thus, as concerns the two layers approximations, one
can note for instance the \textit{Q-scheme} proposed in \citep{Castro2001},
the recent relaxation approach \citep{Abgrall2009} able to guarantee
the preservation of motionless steady states, or the so called central-upwind
scheme in \citep{Kurganov2009}. Other splitting and upwind schemes
can be found, with for instance in \citep{Diaz2014} (see also its
extension to three layers proposed in \citep{Chertok2013} with a
study of the hyperbolicity range), the \textit{f-wave} propagation
finite volume method in \citep{Mandli2013} handling dry states or
the \textit{well-balancing} and positivity-preserving results established
in \citep{Berthon2015} within a splitting approach. At last, numerical
methods for one-dimensional multilayer shallow water models with mass
exchange are also proposed without density stratification in \citep{Audusse2014}
and with in \citep{Audusse2011}. The approach is quite different
since the layer depths are not independent variables and only the
free surface is treated, and also because a part of the coupling terms
are treated as a source term. 

That being so, and although a first relevant approximation for ocean
modelling may be provided by a bi-fluid stratification, the number
of layers involved in most of current oceanic flow simulations with
modern operational softwares is much more important in practice, in
the order of several tens. This level of refinement ensures a reasonable
compromise between the needs imposed by an accurate vertical discretization
and computational constraints. Unfortunately, extending the approaches
previously mentioned to the general case is quite difficult to achieve.
One of the reasons is that they are not specially designed to preserve
the asymptotics observed in low Froude number regimes. This requirement
is mandatory for the simulation of oceanic flows, since the velocities
magnitude are very moderate compared to the gravity wave speed far
from the coast. Considering the integration time of realistic simulations,
this limitation is also due to the paramount importance of the mechanical
energy dissipation, which has to be guaranteed in order to produce
physically acceptable solutions.

Adapting the choices made to express the distribution of the pressure
law, which is also generally formulated, in some sense, by mean of
staggered discretizations of the vertical direction, the multilayer
equations formulated in this work are closely connected to those used
in the majority of operational oceanic simulation softwares like HYCOM
\citep{Bleck2002}, ROMS \citep{Shchepetkin2005} or NEMO \citep{Madec2008},
in isopycnal coordinates (\textit{i.e.} when the flow is represented
along the lines of constant density). These softwares have been developed
on staggered grids, sharing an Arakawa C-grid type as a general basis
with orthogonal curvilinear coordinates to take into account irregular
lateral boundaries. This kind of horizontal space discretization prevents
from well known spurious computational modes observed in low Froude
number regimes. The barotropic and baroclinic modes are resolved with
a time splitting technique allowing to use different time steps, as
the barotropic wave speed is much higher than the larger baroclinic
one, and this allows to save time computation. The barotropic continuity
equation is often resolved with a FCT (Flux Corrected Transport) scheme
and the momentum equations discretized with centered schemes of order
two or four. As concerns time integration, Leapfrog-type schemes are
usually employed, coupled with stabilization procedures using a Robert-Asselin
filter in order to minimize the dissipation. A detailed report outlining
the stability aspects related to oceanic modelling is available in
\citep{Lemarie2015}. If these approaches have been largely successfully
applied, they can exhibit some weaknesses for some practical applications.
The global stability of the numerical methods is not always guaranteed,
threatened for instance by the occurrence of vanishing water heights
or the difficulty to handle boundary conditions.

The permanent willingness to improve the quality and the versatility
of numerical resolutions gave rise to an incresing interest for unstructured
geometries during the past decade. The use of such environments may
appear of major interest for many practical applications, and notably
for oceanic circulation, for which geometrical flexibility allows
to describe complex shaped shoreline coastlines and many different
scales. Thus, an increasing number of projects are based on unstructured
meshes, coping with numerical and implementation issues that have
not yet been overcomed on these geometries. In this connection, a
quite complete review of the most recent results oriented toward ocean
modelling can be found in \citep{Danilov2013}. The SLIM \citep{SLIM}
and FVCOM \citep{FVCOM} projects can be cited as examples. Among
the available works, mention can be made of \citep{LeRoux2012} with
the study of Finite Element methods stability applied to the rotating
shallow water equations. It is concluded that all the numerical schemes
considered are, at some point, concerned with spurious solutions.
Some reference works devoted to the derivation of numerical schemes
for the single layer rotating shallow water equations using unstructured
meshes can be cited, as for instance the collocated upwind Finite
Volume approach in \citep{Beljadid2013}, or the works on hexagonal
staggered grids in \citep{Ringler2010} and \citep{Thuburn2009},
dedicated to the geostrophic balance and the modelling of Rossby waves.
These works were recently extended in \citep{Cotter2014} in the context
of higher order discretizations. Note that such stability problems
were recently addressed on regular C-grids in \citep{Stewart2016},
where the issue of mechanical energy conservation is also investigated.
Note also the fully unstructured edge-based method available in \citep{Szmelter2010},
or the staggered scheme \citep{Gassmann2012} devoted to the conservation
of mechanical energy.

The present work describes a numerical strategy devoted to approximate
the solutions of the two-dimensional multilayer shallow water system
with a density stratification. The scheme is formulated in a fully
explicit context and applicable for general meshes. On the basis of
the constraints discussed above, the main objective is the enforcement
of two essential stability results that are the \textit{asymptotic-preserving}
property with respect to low Froude number regimes, and the discrete
dissipation of mechanical energy. The outline of this paper is organized
as follows. In \S 2, we propose a regularization of the model that
allows a better control of the mechanical energy production. We then
give the formulation of the explicit scheme, designed to provide a
discrete equivalent to this formalism, i.e. that allows the decrease
of the mechanical energy. The \S 3 is devoted to stability issues.
Well-balanced and robustness properties are addressed first. We then
show a control on the mechanical energy production, and put it in
correlation with our investigations in the linear case. Asymptotic
preserving properties are established in a semi-continuous context
in \S 4. A last step of numerical validation is finally proposed
to assess the scheme abilities for large-scale simulations. Four test
cases are proposed, implying the study of linear and non-linear solutions,
analysis of convergence rate considering a non trivial topography,
discontinuous solutions, and a last test in a realistic context.

\section{Preliminaries}

\subsection{Physical model}

Denoting $L$ the number of layers involved in the description of
the flow, $t$ and $\mathbf{x}=(x,y)$ the time and space variables,
the dynamics is governed by a general conservation law which consists
of a set of $3\times L$ equations linking the mass in each layer
$H_{i}(t,\mathbf{x})\geq0$ to the horizontal velocity $\mathbf{u}_{i}(t,\mathbf{x})$.
The system is submitted to gravitational forces through the scalar
potential $\Phi_{i}({\boldsymbol{H}},\mathbf{x})$, where ${\boldsymbol{H}}=\,^{t}\left(H_{1},\cdots,H_{L}\right)$:
\begin{equation}
\left\{ \begin{array}{lclcl}
\partial_{t}H_{i} & + & \mathrm{div}\left(H_{i}{\textbf{u}_{i}}\right) & = & 0\\
\partial_{t}(H_{i}{\textbf{u}_{i}}) & + & \mathrm{div}\left(H_{i}{\textbf{u}_{i}}\otimes{\textbf{u}_{i}}\right) & = & -H_{i}\nabla\Phi_{i}/\varepsilon^{2}
\end{array}\,.\right.\label{Model0}
\end{equation}
In the above equations, the parameter $\varepsilon$ is introduced
to account for the scale factor between inertial and potential forces.
This ratio is commonly referred to as \textit{Froude number }or\textit{
Mach number, }depending on the physical context. Similarly, the scalar
potential $\Phi_{i}$ introduced to account for the pressure law may
take different formulations. In the case of the multilayer shallow
water system, and assuming a constant density ${\rho_{i}}$ for each
layer $i$, the effective mass corresponds to $H_{i}={\rho_{i}}{h_{i}}$,
${h_{i}}$, standing for the layer thickness (see Fig.\ref{fig:multilayer-sw-sketch}).
Then, denoting by $z_{b}$ the bottom topography, the scalar potential
is given by (see \citep{Vallis2006}) :

\begin{equation}
{\Phi_{i}}=g\left(z_{b}+{\displaystyle {\sum_{j=1}^{L}}\dfrac{{\rho_{j}}}{\rho_{\max(i,j)}}{h_{j}}}\right)\,.\label{phii}
\end{equation}

From a more general viewpoint, the potential and kinetic energies
attached to the system are defined by $\partial_{H_{i}}\mathcal{E}={\Phi_{i}}$
and ${\mathcal{K}_{i}}=\dfrac{1}{2}H_{i}\norm{\textbf{u}_{i}}^{2}$.
We recall the conservation law satisfied by the mechanical energy
$E=\mathcal{E}/\varepsilon^{2}+{\displaystyle {\sum_{i=1}^{L}}\:{\mathcal{K}_{i}}}$
for regular solutions, corresponding to the second law of thermodynamics:
\begin{equation}
\partial_{t}E+{\sum_{i=1}^{L}}\:\mathrm{div}\Big(\left(H_{i}{\Phi_{i}}/\varepsilon^{2}+{\mathcal{K}_{i}}\right)\,{\textbf{u}_{i}}\Big)=0\,.\label{Energy}
\end{equation}

As concerns numerical resolution of (\ref{Model0}), based on the
constraints discussed above, several guidelines are to be followed,
principally based on two particular stability criteria. The first
one, that has so far not been rigorously addressed in the general
multilayer case, concerns the capability to describe the low Froude
number asymptotics (i.e. when $\varepsilon\ll1$). In these regimes,
and as shown in our numerical experiments, Godunov-type schemes may
bring too much dissipation and do not guarantee a good description
of the flow. It is therefore crucial to work on the basis of rigorous
consistency results. As stated in \citep{Dellacherie2010} in the
context of Euler equations, these asymptotic behaviours are principally
governed by the gradient pressure treatment (corresponding to $\nabla\Phi_{i}$
in (\ref{Model0})), for which centred approaches should be favoured. 

The second essential point is related to the mechanical energy dissipation.
More precisely, this means that the total energy attached to the discrete
system will not increase in time, in accordance with the continuous
frame (\ref{Energy}). This property is crucial for geophysical flows,
since an inappropriate discretization of the system may lead to energy
production and break the stability of the system in large times. Such
considerations of physically admissible solutions are studied in the
numerical approach \citep{Bouchut2010} for the one-dimensional model,
where a semi-discrete entropy inequality is established in addition
to the well-balancing property, treating the non-conservative coupling
part as a source term. A stronger result is obtained in the two layers
case with a fully discrete version \citep{Bouchut2008}. An interesting
approach can be found in \citep{Grenier2013}, in the context of a
compressible multifluid model. Inspired from the ideas of the AUSM
methods for gas dynamics (see \citep{Liou1993} and \citep{Liou2006})
the formalism implies a modified velocity transport, shifted proportionally
to the pressure gradient, whose goal is to provide a control on the
energy budget at the continuous level. On this basis, a simple and
efficient Finite Volume like scheme is derived, designed to provide
a discrete equivalent of this result. More recently, a general extension
has been proposed in \citep{Parisot2015} with the semi-implicit scheme
for the two-dimensional multilayer shallow water model. Note that
in addition, the mentioned approaches have the common feature of being
asymptotic-preserving with respect to low Froude number regimes, notably
thanks to a centred discretization of the pressure gradient, as discussed
above.

To get a better picture of the formalism, we point out that this strategy
can be interpreted at the continuous level as a discrete form of the
following regularized model: 
\begin{equation}
\left\{ \begin{array}{lclcl}
\partial_{t}H_{i} & + & \mathrm{div}\left(H_{i}\left(\textbf{u}_{i}-\delta\textbf{u}_{i}\right)\right) & = & 0\\
\partial_{t}(H_{i}\textbf{u}_{i}) & + & \mathrm{div}\left(H_{i}\textbf{u}_{i}\otimes\left(\textbf{u}_{i}-\delta\textbf{u}_{i}\right)\right) & = & -H_{i}\nabla\Phi_{i}/\varepsilon^{2}
\end{array}\,,\right.\label{Model}
\end{equation}
$\delta{\textbf{u}_{i}}$ standing for a generic perturbation on the
velocity. This modification has the following impact on the energy
conservation (\ref{Energy}): 
\begin{equation}
\partial_{t}E+\sum_{i=1}^{L}\:div\Big(\left(H_{i}\Phi_{i}/\varepsilon^{2}+\mathcal{K}_{i}\right)\,\left(\textbf{u}_{i}-\delta\textbf{u}_{i}\right)\Big)=-\sum_{i=1}^{L}\:\delta\textbf{u}_{i}.\nabla\Phi_{i}/\varepsilon^{2}\,,\label{Energy2}
\end{equation}
which formally justifies a calibration of $\delta{\textbf{u}_{i}}$
in terms of the pressure gradient, to ensure a global decrease of
the mechanical energy. 

Following these lines, we aim at proposing a discrete equivalent of
(\ref{Energy2}), in a fully explicit context. In this environment,
the use of a shifted velocity transport $\left({\textbf{u}_{i}}-\delta{\textbf{u}_{i}}\right)$
is not sufficient to ensure a mechanical energy control, and a correction
term is also needed on the scalar potential $\Phi_{i}$. It may also
be shown that this adjustment, expressed in terms of discharge divergence,
has also regularizing virtues on the energy budget at the continuous
level. The practical advantages of an explicit formulation in comparison
with the semi-implicit formulation proposed in \citep{Parisot2015}
stand in the exemption of resolving the nonlinear system arising from
for the continuity equation, an easier implementation of boundary
conditions, and high order extensions in space and time can be more
relatively easily derived. If the time step can be more restrictive,
it is far from being obvious to compare the relative performances
of the explicit and semi-implicit approaches in terms of accuracy
\textit{vs.} computation time. The particular difficulty to derive
high order time stepping schemes for semi-implicit strategies without
loosing strong stability properties makes things worse. At last, mixed
formulations can also be derived decoupling the time advancement of
the fast barotropic mode with the semi-implicit scheme, and the slow
baroclinic modes using the explicit scheme, like it is already done
in oceanic simulation softwares. Such a numerical model, that couples
the benefits of the two approaches, is currently under study.

As mentioned before, the equations (\ref{Model0}) enjoys a large
range of applicability, so that the present approach is not only limited
to large-scale oceanic circulation. Generally, we need the following
regularity hypothesis on the potential forces:

\begin{hyp}\textit{Regularity assumptions on the potential forces}
\label{Regularity} 
\begin{itemize}
\item The potential $\mathcal{E}$ is a regular and convex function of the
mass, which means that the Hessian ${\boldsymbol{\mathcal{H}}}$ given
by (see \citep{Vallis2006}):
\begin{equation}
{\boldsymbol{\mathcal{H}}_{ij}}=\partial_{H_{i}{H_{j}}}^{2}\mathcal{E}=\partial_{{H_{j}}}{\Phi_{i}}\:,\:(i,j)\in\llbracket1,\ldots,L\rrbracket^{2}\:,\:\label{Hessian}
\end{equation}
is positive-definite. 
\item The potential is a symmetric and linear function of the mass, that
is $\Phi={\boldsymbol{\mathcal{H}}}.\boldsymbol{H}$ and ${\boldsymbol{\mathcal{H}}}$
symmetric. 
\item The $L^{2}$ norm of ${\boldsymbol{\mathcal{H}}}$ is uniformly bounded
with respect to space and time, more precisely: 
\begin{equation}
\vert\vert\vert{\boldsymbol{\mathcal{H}}}(\boldsymbol{H},\mathbf{x})\vert\vert\vert_{L^{2}}\,\leq\,{C_{{\boldsymbol{\mathcal{H}}}}}\,.\label{HL2}
\end{equation}
\end{itemize}
\end{hyp}

\begin{remark} \label{Hess}

In the case where the scalar potential is given by (\ref{phii}),
the Hessian ${\boldsymbol{\mathcal{H}}}({\boldsymbol{H}},\mathbf{x})$
is constant in space and time: 
\begin{equation}
{\boldsymbol{\mathcal{H}}}_{i,j}=g\:\rho_{j}/\rho_{max(i,j)}\,,\label{Hij}
\end{equation}
and the requirements listed in Hypothesis \ref{Regularity} are trivially
satisfied. The $L^{2}$ norm of $\mathcal{H}$ is thus evaluated in
a pre-processing step, and we simply take ${C_{{\boldsymbol{\mathcal{H}}}}}\,=\vert\vert\vert{\boldsymbol{\mathcal{H}}}(\boldsymbol{H},\mathbf{x})\vert\vert\vert_{L^{2}}.$
Note also that this formulation automatically brings the conservation
of the total momentum, as shown in \citep{Parisot2015}. However,
this is not sufficient to guarantee the well-posedness of the problem:
some conditions can be found in \citep{Monjarret_PHD}, regarding
${\boldsymbol{\mathcal{H}}}$ as a natural symmetrizer of the system.
These conditions are based on smallness assumptions on the shear velocity
and are sufficient to ensure that the system is hyperbolic. These
low-shear conditions, easy to check numerically, were always widely
satisfied in our operational situations. Hence, these aspects will
not be discussed further in this work, and we refer to the references
above for details.

\end{remark}

\subsection{Notations}

We consider in this work a tesselation $\mathbb{T}$ of the computational
domain $\Omega\subset\mathbb{R}^{2}$. We will denote ${m_{K}}$ the
area and ${m_{\partial K}}$ the perimeter of a cell $K\in\mathbb{T}$.
The boundary of $K$ will be denoted $\partial K$, and for any edge
$e\in\partial K$, ${m_{e}}$ the length of the corresponding boundary
interface and $\textbf{n}_{e,K}$ the outward normal to $e$ pointing
to the neighbour $K_{e}$ (see Fig.\ref{Geom1}).

\begin{figure}[!tbh]
\centering{}\includegraphics[width=0.3\textwidth]{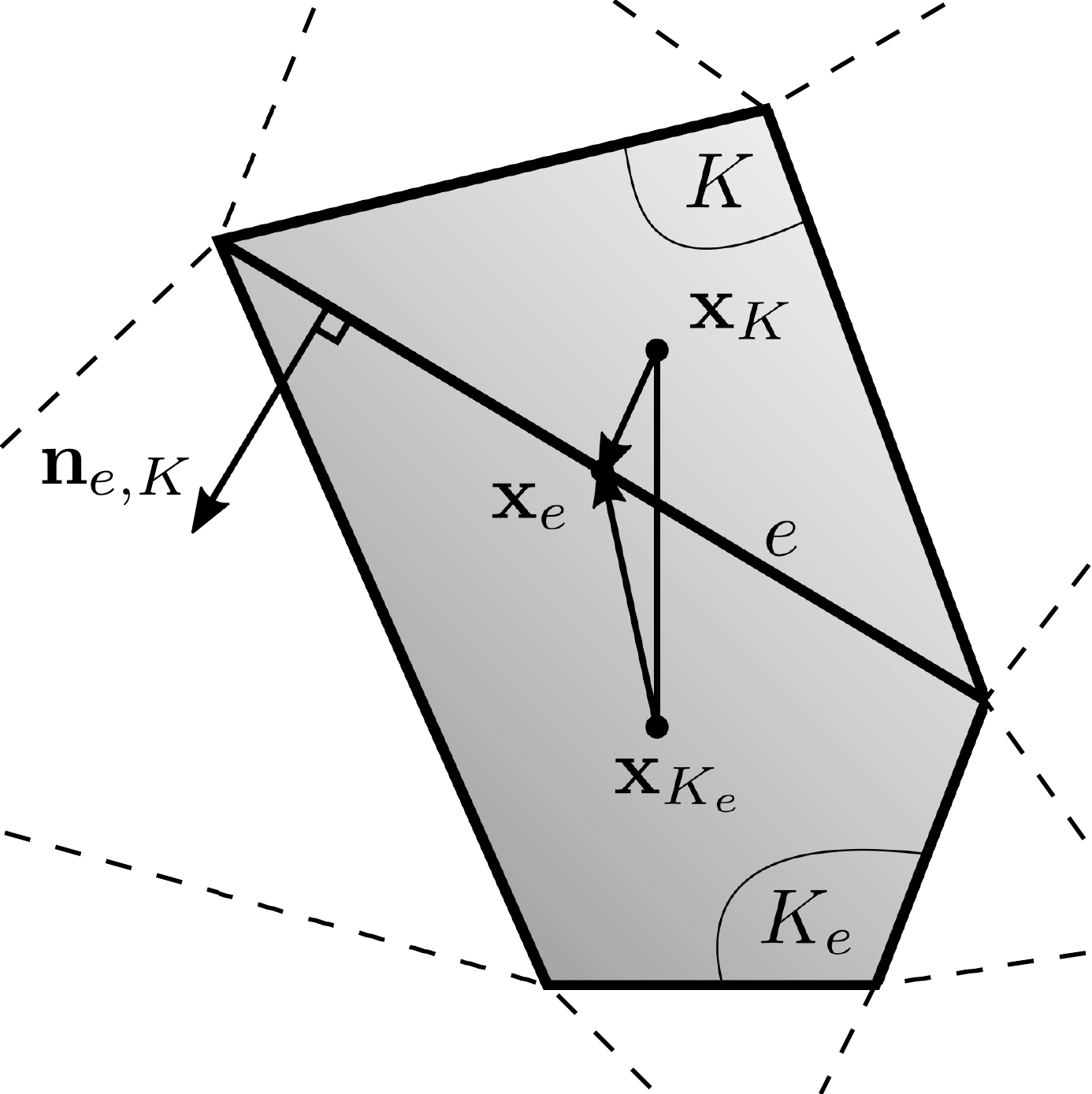}\caption{Geometric settings - Focus on the edge $e\in\partial K\cap\partial K_{e}$;
$\textbf{n}_{e,K}$ is the outward normal to $e$, pointing to $K_{e}$,
$\mathbf{x}_{K}$ indicates the mass center of $K$ and $\mathbf{x}_{e}$
is the middle of $e$. \label{Geom1}}
\end{figure}

Let's now introduce some useful notations. For a scalar piecewise
constant function $w$ we define: 
\[
\overline{w}_{e}=\dfrac{1}{2}\left(w_{K_{e}}+w_{K}\right)\quad,\quad\boldsymbol{\delta}w_{e}=\dfrac{1}{2}\left(w_{K_{e}}-w_{K}\right)\textbf{n}_{e,K}\,,
\]
and similary, for a piecewise constant vectorial function $\textbf{w}$:
\[
\overline{\textbf{w}}_{e}=\dfrac{1}{2}\left(\textbf{w}_{K_{e}}+\textbf{w}_{K}\right)\quad,\quad{\delta}\textbf{w}_{e}=\dfrac{1}{2}\left(\textbf{w}_{K_{e}}-\textbf{w}_{K}\right).\textbf{n}_{e,K}\,.
\]
We also set: $w^{\pm}=\dfrac{1}{2}(w\pm\vert w\vert)$ the positive
and negative parts of a scalar function $w$.

\subsection{Numerical approach}

The numerical scheme we consider is the following:%

\begin{subequations}
\begin{empheq}[left=\empheqlbrace]{alignat=3}
& H_{K,i}^{n+1} & \;=\; & H_{K,i}^{n} & \;-\; & {\displaystyle \dfrac{\Delta t}{m_{K}}\sum_{e\in\partial K}\left(\mathcal{F}_{e,i}^{n}.\mathbf{n}_{e,K}\right)m_{e}} \label{mass} \\
 & H_{K,i}^{n+1}\mathbf{u}_{K,i}^{n+1} & \;=\; & H_{K,i}^{n}\mathbf{u}_{K,i}^{n} & \;-\; & \dfrac{\Delta t}{m_{K}}\sum_{e\in\partial K}\left(\mathbf{u}_{K,i}^{n}\left(\mathcal{F}_{e,i}^{n}.\mathbf{n}_{e,K}\right)^{+}+\mathbf{u}_{K_{e},i}^{n}\left(\mathcal{F}_{e,i}^{n}.\mathbf{n}_{e,K_{e}}\right)^{-}\right)m_{e} \label{mom} \\ 
 &  &  &  & \;-\; & \dfrac{\Delta t}{m_{K}}H_{K,i}^{n}\sum_{e\in\partial K}\left(\dfrac{\Phi_{e,i}^{n,\ast}}{\varepsilon^{2}}\mathbf{n}_{e,K}\right)m_{e} \nonumber
\end{empheq}
\end{subequations}where we have set:

\begin{subequations}

\begin{alignat}{3}
 & \mathcal{F}_{e,i}^{n} & \;=\; & \overline{H\textbf{u}}_{e,i}^{n}-\Pi_{e,i}^{n} & \;=\; & \left(\dfrac{H_{K,i}^{n}\textbf{u}_{K,i}^{n}+H_{K_{e},i}^{n}\textbf{u}_{K_{e},i}^{n}}{2}\right)-\Pi_{e,i}^{n}\,,\label{fe}\\
 & \Phi_{e,i}^{n,\ast} & \;=\; & \overline{\Phi}_{e,i}^{n}-\Lambda_{e,i}^{n} & \;=\; & \left(\dfrac{\Phi_{K,i}^{n}+\Phi_{K_{e},i}^{n}}{2}\right)-\Lambda_{e,i}^{n}\,.\label{phiea}
\end{alignat}

\end{subequations}

The quantities ${\Lambda_{e,i}^{n}}$ and $\Pi_{e,i}^{n}$ introduced
above stand for the perturbations respectively assigned to the potential
forces and numerical fluxes, designed to ensure the stability of the
method. They are defined as follows: 
\begin{align}
\Pi_{e,i}^{n} & =\gamma\:\Delta t\:\left(\frac{\widehat{H}}{\text{\ensuremath{\Delta}}}\right)_{e,i}^{n}\:\dfrac{\boldsymbol{\delta}\Phi_{e,i}^{n}}{\varepsilon^{2}}\quad,\quad\gamma\geq0\,,\label{pien}\\
\Lambda_{e,i}^{n} & ={\normalcolor {\normalcolor {\color{teal}{\normalcolor \alpha\:\Delta t\:\left(\frac{C_{\boldsymbol{\mathcal{H}}}}{\Delta_{e}}\right)\:\delta(H\textbf{u})_{e,i}^{n}}}}}\quad,\quad\alpha\geq0\,,\label{alpen}
\end{align}
with the geometric constant: 
\begin{equation}
\frac{1}{\Delta_{e}}=\dfrac{1}{2}\left(\frac{1}{\Delta_{K}}+\frac{1}{\Delta_{K_{e}}}\right)=\dfrac{1}{2}\left(\dfrac{m_{\partial K}}{m_{K}}+\dfrac{m_{\partial K_{e}}}{m_{K_{e}}}\right)\,,\label{eq:d}
\end{equation}
where $d$ is the problem dimension, and the weighted average: 

\begin{equation}
\left(\frac{\widehat{H}}{\text{\ensuremath{\Delta}}}\right)_{e,i}^{n}=\dfrac{1}{2}\left(\left(\frac{\widehat{H}}{\text{\ensuremath{\Delta}}}\right)_{K,i}^{n}+\left(\frac{\widehat{H}}{\text{\ensuremath{\Delta}}}\right)_{K_{e},i}^{n}\right)=\dfrac{1}{2}\left(\widehat{H}_{K}^{n}\dfrac{m_{\partial K}}{2m_{K}}+\widehat{H}_{K_{e}}^{n}\dfrac{m_{\partial K_{e}}}{2m_{K_{e}}}\right)\,,\label{hmu}
\end{equation}
where $\widehat{H}_{K}^{n}$ is $H_{K}^{n}$ in practice and for all
the presented simulations. Nervertheless, for simplification purposes,
the non-linear stability analysis developed in this work implies an
implicit definition of $\widehat{H}_{K}^{n}$, according to (\ref{hath})
(see Remark \ref{Implicit} below). We also refer to Theorem \ref{dissipation}
and subsequent Remark \ref{Rq1} for the calibration of the stabilization
constants $\alpha$ and $\gamma$.

As the increase of space and time order will be discussed throughout
the paper, we give the second-order extensions in space and time in
the Appendix \ref{subsec:Second-order-extension}. A MUSCL spatial
reconstruction scheme (\ref{subsec:second-order-scheme}) is used
(substituting at each side of the edge $e$ the primitive variables
$H_{K}$, $\mathbf{u}_{K}$, $H_{K_{e}}$ and $\mathbf{u}_{K_{e}}$
by reconstructed primitive variables $H_{e,K}$, $\mathbf{u}_{e,K}$,
$H_{e,K_{e}}$ and $\mathbf{u}_{e,K_{e}}$ to evalutate the numerical
fluxes in (\ref{mass}) and (\ref{mom})). The temporal discretization
is achieved using the Heun's method (\ref{subsec:time-stepping-coriolis}).

\begin{remark}\label{def_pi}

$\Pi_{e,i}^{n}$ is related to the potential pressure gradient $\boldsymbol{\delta}\Phi_{e,i}^{n}$
and is intended to reproduce the stabilizing effects of the generic
perturbation $\delta\textbf{u}_{i}$ introduced in the continuous
frame in (\ref{Model}) to regularize the energy budget (\ref{Energy2}).
Similarly, it can be shown that the continuous equivalent of $\Lambda_{e,i}^{n}$,
which involves an approximation of the discharge divergence, brings
an additional dissipation term in (\ref{Energy2}).\end{remark}

\begin{remark}\label{Implicit}

$\widehat{H}_{K,i}^{n}$ is indeed explicit in practice. If the mechanical
energy dissipation will be demonstrated here with an implicit definition
of $\widehat{H}_{K,i}^{n}$ according to (\ref{hath}), taking $\widehat{H}_{K}^{n}=H_{K,i}^{n}$
introduces an error in $\mathcal{O}(\Delta t)$ and is widely sufficient
to preserve the overall stability. These conclusions have been reached
with the support of many numerical experiments (including the propagation
of discontinuous initial solutions), with a particular focus on low
Froude number regimes ($\varepsilon\ll1$), for which we observed
no significant impact. Moreover, the proof of mechanical energy dissipation
proposed in Appendix \ref{Proofs} can be realized in a fully explicit
way, at the price of a more complex analysis and a slight adaptation
of $\Pi_{e,i}^{n}$, leading to very similar results. For readability
reasons we chose not to detail this proof and some insights are available
in Remark \ref{Explicit}. The implicit definition of $\widehat{H}_{K,i}^{n}$
in the non-linear stability proof is considerably lighter and provides
a good overview of the employed strategy.\end{remark}

Let us finally remark that the numerical scheme satisfied by the velocity
is: 
\begin{equation}
\textbf{u}_{K,i}^{n+1}=\textbf{u}_{K,i}^{n}-\dfrac{\Delta t}{m_{K}}{\sum_{e\in\partial K}}\dfrac{{\textbf{u}_{K_{e},i}^{n}}-\textbf{u}_{K,i}^{n}}{{H_{K,i}^{n+1}}}{\big(\mathcal{F}_{e,i}^{n}.\textbf{n}_{e,K}\big)^{-}}{m_{e}}-\dfrac{\Delta t}{m_{K}}\dfrac{{H_{K,i}^{n}}}{{H_{K,i}^{n+1}}}{\sum_{e\in\partial K}}\dfrac{{\Phi_{e,i}^{n,\ast}}}{\varepsilon^{2}}\textbf{n}_{e,K}{m_{e}}\,,\label{u1}
\end{equation}
and note that:
\begin{equation}
{\sum_{e\in\partial K}}{\Phi_{e,i}^{n,\ast}}\textbf{n}_{e,K}{m_{e}}={\sum_{e\in\partial K}}{\boldsymbol{\delta}\Phi_{e,i}^{n}}{m_{e}}-{\sum_{e\in\partial K}}{\Lambda_{e,i}^{n}}\textbf{n}_{e,K}{m_{e}}\,,\label{RelPhi}
\end{equation}
since the main term of (\ref{phiea}) involves a centred discretization
of the potential.

We finally recall the explicit CFL condition on which are usually
based Godunov-type schemes (see \citep{Godlewski1996}):

\begin{equation}
\left(\vert\overline{\textbf{u}}_{e,i}^{n}.\textbf{n}_{e,K}\vert+\dfrac{c_{e,i}^{n}}{\varepsilon}\right)\Delta t\max\left(\dfrac{{m_{e}}}{{m_{K}}},\dfrac{{m_{e}}}{{m_{K_{e}}}}\,\right)\leq\tau_{CFL}\,,\label{CFL_1}
\end{equation}
where $\left(c_{e,i}^{n}\right)_{1\leq i\leq L}$ corresponds to the
square root of the eigenvalues of the matrix $H_{i}{\boldsymbol{\mathcal{H}}}_{i,j}$.

\section{Stability issues}

In this section we focus on crucial linear and non-linear stability
criterion: motionless steady states preservation, water height positivity
preservation, mechanical energy dissipation and linear stabily analysis.
These essential points need to be integrated in the construction of
numerical schemes expected to respond to practical issues. Traditionally,
providing a numerical approach able to account for all these aspects
simultaneously remains a quite complicated task, especially in the
context of general geometries and stratified multiscale models. Nevertheless,
the formalism employed here allows a quite simple treatment of well-balanced
and robustness properties. We finally provide the complete linear
stability analysis in order to derive relaxed stability conditions
comparatively to ones ensuring the mechanical energy dissipation which
are not optimal. Indeed, it will be shown that the linear stability
conditions are far less restrictive.

\subsection{Well Balancing\label{well-balancing}}

As a first stability criterion we study the problem of steady states
preservation. From a general point of view, regarding the difficulty
to derive and handle numerically the full set of steady states observed
in most of realistic evolution processes, it is classical to focus
first on rest states. In our formalism, this leads to the following
trivial solution:
\[
\textbf{u}_{K,i}=0\quad,\quad\Phi_{K,i}=\Phi_{i}\,,
\]
for all volume control $K$ and layer $i$. This trivial solution
is nothing but the generalization to the multilayer case of the classical
\textit{lake at rest} solution in the $L=1$ case: 
\[
\textbf{u}=0\quad,\quad h+z_{b}=0\,,
\]
which has indeed to be exactly preserved to avoid the appearance of
non physical perturbations in the vicinity of flat free surface configurations.
The capability to preserve these particular steady states already
stands for a discriminating property, even in the one layer case,
notably with the increasing interest of unstructured meshes and high
order space schemes. In spite of these difficulties, the proposed
discretization allows their exact preservation without needing any
correction at first-order in space and in a very simple way at second-order.
The following approach is thus intrinsically adapted to the preservation
of such equilibria, standing for a good alternative to the classical
well-balanced methods.

\begin{proposition}\textit{Well Balancing} \\
 The scheme (\ref{mass},\ref{mom}) equipped with the numerical fluxes
(\ref{fe}) and discrete potential (\ref{phiea}) preserves the steady
states at rest defined by $\textbf{u}_{K,i}^{n}=0$ and ${\Phi_{K,i}^{n}}=\Phi_{i}$.

\end{proposition}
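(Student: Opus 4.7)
The plan is to verify directly that, under the stated assumptions, every contribution to the scheme either vanishes identically or telescopes to zero through the elementary geometric identity $\sum_{e\in\partial K}\mathbf{n}_{e,K}\,m_{e}=0$, valid for every closed control volume $K$.

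First I would substitute the rest-state hypotheses $\mathbf{u}_{K,i}^{n}=0$ and $\Phi_{K,i}^{n}=\Phi_{i}$ (independent of $K$) into the definitions (\ref{fe})--(\ref{alpen}). The momentum products $H_{K,i}^{n}\mathbf{u}_{K,i}^{n}$ vanish, so $\overline{H\mathbf{u}}_{e,i}^{n}=0$ and the jump $\delta(H\mathbf{u})_{e,i}^{n}=0$, which forces $\Lambda_{e,i}^{n}=0$. Simultaneously, the uniformity of $\Phi_{K,i}^{n}$ across cells yields $\boldsymbol{\delta}\Phi_{e,i}^{n}=0$, so $\Pi_{e,i}^{n}=0$. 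Consequently the interface flux reduces to $\mathcal{F}_{e,i}^{n}=0$ and the starred potential to $\Phi_{e,i}^{n,\ast}=\Phi_{i}$.

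Next I would feed these simplifications into the update formulas (\ref{mass}) and (\ref{mom}). The mass update immediately gives $H_{K,i}^{n+1}=H_{K,i}^{n}$, since every flux at the interfaces is zero. For the momentum update, the advective sum vanishes because it is built from $\mathcal{F}_{e,i}^{n}$, while the potential contribution becomes
\[
\dfrac{\Delta t}{m_{K}}\,H_{K,i}^{n}\,\dfrac{\Phi_{i}}{\varepsilon^{2}}\sum_{e\in\partial K}\mathbf{n}_{e,K}\,m_{e}\;=\;0,
\]
by the geometric identity applied to the polygonal boundary of $K$. Therefore $H_{K,i}^{n+1}\mathbf{u}_{K,i}^{n+1}=0$, and since $H_{K,i}^{n+1}=H_{K,i}^{n}$ is unchanged (so in particular the water column is not empty wherever it was not empty at time $n$), we conclude $\mathbf{u}_{K,i}^{n+1}=0$.

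Finally, since $\Phi_{i}$ is determined by $\boldsymbol{H}$ alone (and the fixed topography) through the pressure law, and since the mass vector is preserved cell by cell, the discrete potential at the new time step still satisfies $\Phi_{K,i}^{n+1}=\Phi_{i}$. No real obstacle appears here: the only point that deserves to be stated explicitly is the vanishing of $\sum_{e\in\partial K}\mathbf{n}_{e,K}\,m_{e}$, which is precisely what allows a centred discretization of the constant pressure state to produce no spurious momentum, and which is the structural reason why no bespoke well-balanced correction is needed at first order.
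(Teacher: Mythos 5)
Your proof is correct and follows essentially the same route as the paper's: both show $\boldsymbol{\delta}\Phi_{e,i}^{n}=0$ forces $\Pi_{e,i}^{n}=0$ hence $\mathcal{F}_{e,i}^{n}=0$ and mass conservation, that $\Lambda_{e,i}^{n}=0$ kills the potential correction, and that the remaining constant-potential term vanishes by $\sum_{e\in\partial K}\mathbf{n}_{e,K}m_{e}=0$. Your closing observation that the preserved masses imply $\Phi_{K,i}^{n+1}=\Phi_{i}$ (so the rest state persists at all subsequent steps) is a small but worthwhile explicit addition that the paper leaves implicit.
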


\begin{proof} Since the perturbation $\Pi_{e,i}^{n}$ (\ref{alpen})
is expressed in terms of ${\boldsymbol{\delta}\Phi_{e,i}^{n}}$, we
immediately have $\mathcal{F}_{e,i}^{n}=0$ and (\ref{mass}) gives
${H_{K,i}^{n+1}}={H_{K,i}^{n}}$. Then, since ${\Lambda_{e,i}^{n}}=0$,
the momentum equation (\ref{mom}) reduces to: 
\begin{equation}
{H_{K,i}^{n+1}}\textbf{u}_{K,i}^{n+1}=-\dfrac{\Delta t}{m_{K}}{H_{K,i}^{n}}\left(\dfrac{\Phi_{i}}{\varepsilon^{2}}\right)\left({\sum_{e\in\partial K}}\textbf{n}_{e,K}{m_{e}}\right)=0\,,
\end{equation}
which allows to conclude.\end{proof}

The second-order MUSCL spatial reconstruction requires to evaluate
a vectorial slope in each volume control K for all primitive variables
(one can also compute the vectorial slopes from conservative or entropic
variables to reconstruct at the end the primitive variables at the
edge). The resulting scheme (\ref{eq:exp-scheme-1-1}-\ref{eq:exp-scheme-2-1})
produces a non well-balanced scheme in most of practical cases. This
is because the water surface elevation ${\normalcolor \eta_{i}={\color{teal}{\normalcolor z_{b}+}}\sum_{k=i}^{^{L}}h_{k}}$
must be locally linear to produce for each edge $e$ two equal reconstructed
water surface elevation $\eta_{K,i}$ and $\eta_{K_{e},i}$. As a
consequence, the MUSCL spatial reconstruction breaks the well-balanced
property demonstrated previously. One simple way to resolve this drawback
is to evalute the vectorial slope for the water surface elevation
$\eta_{i}$ rather than for the water height $h_{i}$. Considering
an arbiratrary bed elevation $z_{b_{e}}$ at the edge $e$ (that can
be directly evaluated from a continuous function or taking the half
sum from the two adjacent volume control $K$ and $K_{e}$), the two
water heights are finally evaluated substracting the edge bed elevation
$z_{b_{e}}$ to the two reconstructed water surface elevation $\eta_{K,i}$
and $\eta_{K_{e},i}$.

\subsection{Robustness\label{Robustness}}

We investigate here the problem of robustness by proposing a CFL condition
allowing to obtain the preservation of the water height positivity.

\begin{proposition}\textit{Robustness}\label{Rob}\\
We consider the numerical scheme (\ref{mass},\ref{mom}) equipped
with the numerical fluxes (\ref{fe}) and discrete potential (\ref{phiea}).
Assume a CFL condition of the type: 
\begin{equation}
\Delta t\max\left(\dfrac{{m_{\partial K}}}{{m_{K}}},\dfrac{{m_{\partial K_{e}}}}{{m_{K_{e}}}}\right)\left(\vert\overline{\textbf{u}}_{e,i}^{n}.\textbf{n}_{e,K}\vert+\sqrt{\gamma}\sqrt{\dfrac{\vert{\boldsymbol{\delta}\Phi_{e,i}^{n}}\vert}{\varepsilon^{2}}}\right)\leq\left(\dfrac{\beta}{\beta+1}\right)\xi_{e,i}^{n}\label{CFL2}
\end{equation}
for each edge $e=\partial K\cap\partial K_{e}$ , where $0<\beta\leq1$
and:

\begin{equation}
\xi_{e,i}^{n}=\dfrac{\min\left({H_{K,i}^{n}},{H_{K_{e},i}^{n}}\right)}{\max\left({\widehat{H}_{K,i}^{n}},{\widehat{H}_{K_{e},i}^{n}},{H_{K,i}^{n}},{H_{K_{e},i}^{n}}\right)}\,.\label{xien}
\end{equation}
Then:

\begin{equation}
{H_{K,i}^{n+1}}\geq\dfrac{1}{\beta}\dfrac{\Delta t}{m_{K}}{\sum_{e\in\partial K}}-{\big(\mathcal{F}_{e,i}^{n}.\textbf{n}_{e,K}\big)^{-}}{m_{e}}\geq0\,.\label{Robustesse}
\end{equation}
\end{proposition}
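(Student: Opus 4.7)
\textbf{The plan} is to reduce the claim to a single sufficient ``outflow $\leq H_{K,i}^{n}$'' estimate and to establish the latter edge by edge using (\ref{CFL2}). Splitting each flux as $\mathcal{F}_{e,i}^{n}\cdot\mathbf{n}_{e,K}=(\mathcal{F}_{e,i}^{n}\cdot\mathbf{n}_{e,K})^{+}+(\mathcal{F}_{e,i}^{n}\cdot\mathbf{n}_{e,K})^{-}$ and rearranging (\ref{mass}), the first inequality in (\ref{Robustesse}) is equivalent to
$$H_{K,i}^{n}\;\geq\;\frac{\Delta t}{m_{K}}\sum_{e\in\partial K}\Bigl[(\mathcal{F}_{e,i}^{n}\cdot\mathbf{n}_{e,K})^{+}+\bigl(\tfrac{1}{\beta}-1\bigr)\bigl(-(\mathcal{F}_{e,i}^{n}\cdot\mathbf{n}_{e,K})^{-}\bigr)\Bigr]m_{e},$$
while the second inequality in (\ref{Robustesse}) is automatic since $-(\mathcal{F}\cdot\mathbf{n})^{-}\geq 0$. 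On each edge only one of $(\mathcal{F}\cdot\mathbf{n})^{+}$ and $-(\mathcal{F}\cdot\mathbf{n})^{-}$ is non-zero and both are bounded by $|\mathcal{F}\cdot\mathbf{n}|$, so the bracket is at most $\tfrac{1}{\beta}|\mathcal{F}_{e,i}^{n}\cdot\mathbf{n}_{e,K}|$ for any $\beta\in(0,1]$. It therefore suffices to prove the outflow-type estimate
$$\frac{\Delta t}{m_{K}}\sum_{e\in\partial K}|\mathcal{F}_{e,i}^{n}\cdot\mathbf{n}_{e,K}|\,m_{e}\;\leq\;\beta\,H_{K,i}^{n}.$$

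To prove this, I would use the triangle inequality $|\mathcal{F}_{e,i}^{n}\cdot\mathbf{n}_{e,K}|\leq|\overline{H\mathbf{u}}_{e,i}^{n}\cdot\mathbf{n}_{e,K}|+|\Pi_{e,i}^{n}\cdot\mathbf{n}_{e,K}|$ and treat each piece with one half of (\ref{CFL2}). For the advective contribution, the goal is to extract a factor $|\overline{\mathbf{u}}_{e,i}^{n}\cdot\mathbf{n}_{e,K}|$ times $\max(H_{K,i}^{n},H_{K_{e},i}^{n})$; combined with the advective part of the CFL and the structural identity $\max(\widehat{H},H)\,\xi_{e,i}^{n}\leq\min(H_{K,i}^{n},H_{K_{e},i}^{n})\leq H_{K,i}^{n}$, its summed contribution is bounded by $\tfrac{\beta}{\beta+1}H_{K,i}^{n}$. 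For the pressure perturbation, rewriting
$$|\Pi_{e,i}^{n}\cdot\mathbf{n}_{e,K}|\;=\;\Delta t\,(\widehat{H}/\Delta)_{e,i}^{n}\Bigl(\sqrt{\gamma}\sqrt{|\boldsymbol{\delta}\Phi_{e,i}^{n}|/\varepsilon^{2}}\Bigr)^{2}$$
makes a ``CFL-squared'' structure manifest; together with $(\widehat{H}/\Delta)_{e,i}^{n}\leq\tfrac{1}{2}\max(\widehat{H})\max(\tfrac{m_{\partial K}}{m_{K}},\tfrac{m_{\partial K_{e}}}{m_{K_{e}}})$ and the pressure part of (\ref{CFL2}) squared, its summed contribution is bounded by $\tfrac{1}{2}\bigl(\tfrac{\beta}{\beta+1}\bigr)^{2}H_{K,i}^{n}$. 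Summation over edges is enabled by the telescoping $\sum_{e}m_{e}/m_{K}\leq\max(\tfrac{m_{\partial K}}{m_{K}},\tfrac{m_{\partial K_{e}}}{m_{K_{e}}})$, which absorbs the geometric factor pulled out of (\ref{CFL2}). The final constant $\tfrac{\beta}{\beta+1}+\tfrac{1}{2}\bigl(\tfrac{\beta}{\beta+1}\bigr)^{2}=\tfrac{\beta(3\beta+2)}{2(\beta+1)^{2}}$ is indeed $\leq\beta$ (equivalent to $2\beta^{2}+\beta\geq 0$), which closes the estimate.

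\textbf{The main technical obstacle} lies in the advective bound: one has the algebraic identity $\overline{H\mathbf{u}}_{e,i}^{n}\cdot\mathbf{n}_{e,K}=\overline{H}_{e,i}^{n}\,\overline{\mathbf{u}}_{e,i}^{n}\cdot\mathbf{n}_{e,K}+\tfrac{1}{4}(H_{K,i}^{n}-H_{K_{e},i}^{n})(\mathbf{u}_{K,i}^{n}-\mathbf{u}_{K_{e},i}^{n})\cdot\mathbf{n}_{e,K}$; the first summand is readily controlled by $\max(H)\,|\overline{\mathbf{u}}\cdot\mathbf{n}|$, which plugs cleanly into (\ref{CFL2}), but the velocity-jump correction is not seen directly by the CFL and must be either absorbed into the $\max(H)$ estimate via sharper bookkeeping, re-grouped with the pressure correction, or controlled by invoking (\ref{CFL2}) on the neighbouring edges where the jump forces the CFL to be sharp. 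This is where the precise $\xi$-weighting of (\ref{CFL2}) becomes essential; once this subtlety is handled, the arithmetic above concludes.
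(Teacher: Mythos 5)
Your overall route is the paper's route: reduce (\ref{Robustesse}) to a bound of the form $\frac{\Delta t}{m_{K}}\sum_{e}\vert\mathcal{F}_{e,i}^{n}\cdot\mathbf{n}_{e,K}\vert m_{e}\leq c(\beta)H_{K,i}^{n}$, split $\vert\mathcal{F}\cdot\mathbf{n}\vert$ into the centred advective part and the $\Pi$ perturbation, feed the first into the advective half of (\ref{CFL2}) and the second into the pressure half (exploiting that the squared pressure term is dominated by the linear one because the right-hand side of (\ref{CFL2}) is at most $1$ — the paper does this via $\mu b\leq1\Rightarrow\mu^{2}b^{2}\leq\mu b$, you do it via $\xi^{2}\leq\xi$, which is the same mechanism), and close with the $\xi$-weighting $\max(\widehat{H},H)\,\xi_{e,i}^{n}=\min(H_{K,i}^{n},H_{K_{e},i}^{n})\leq H_{K,i}^{n}$. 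Your preliminary reduction is organised slightly differently (you work with $H_{K,i}^{n}\geq\sum_{e}[\,(\mathcal{F}\cdot\mathbf{n})^{+}+(\tfrac1\beta-1)(-(\mathcal{F}\cdot\mathbf{n})^{-})]$ and end with the constant $\tfrac{\beta}{\beta+1}+\tfrac12(\tfrac{\beta}{\beta+1})^{2}\leq\beta$, whereas the paper groups things so that $(1+\beta)\sum_{e}\vert\mathcal{F}\cdot\mathbf{n}\vert\leq\beta H_{K}^{n}$ suffices), but both reductions are correct and the arithmetic you give checks out.

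The one step you leave open — controlling $\vert\overline{H\mathbf{u}}_{e,i}^{n}\cdot\mathbf{n}_{e,K}\vert$ by something the CFL sees — is indeed the only delicate point, but your diagnosis makes it harder than it is. You do not need the identity $\overline{H\mathbf{u}}_{e}\cdot\mathbf{n}=\overline{H}_{e}\,\overline{\mathbf{u}}_{e}\cdot\mathbf{n}+\tfrac14(H_{K}-H_{K_{e}})(\mathbf{u}_{K}-\mathbf{u}_{K_{e}})\cdot\mathbf{n}$ and there is no cross term to chase into neighbouring edges: the direct triangle inequality gives $\vert\overline{H\mathbf{u}}_{e}\cdot\mathbf{n}\vert\leq\tfrac12\left(H_{K}\vert\mathbf{u}_{K}\cdot\mathbf{n}\vert+H_{K_{e}}\vert\mathbf{u}_{K_{e}}\cdot\mathbf{n}\vert\right)\leq\max(H_{K},H_{K_{e}})\cdot\tfrac12\left(\vert\mathbf{u}_{K}\cdot\mathbf{n}\vert+\vert\mathbf{u}_{K_{e}}\cdot\mathbf{n}\vert\right)$, which plugs into the $\xi$-weighting exactly as you intend. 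The only residual subtlety is that $\tfrac12(\vert\mathbf{u}_{K}\cdot\mathbf{n}\vert+\vert\mathbf{u}_{K_{e}}\cdot\mathbf{n}\vert)\geq\vert\overline{\mathbf{u}}_{e}\cdot\mathbf{n}\vert$, with equality only when the two normal velocities have the same sign; so to be literal one must read the advective speed in (\ref{CFL2}) as $\tfrac12(\vert\mathbf{u}_{K}\cdot\mathbf{n}\vert+\vert\mathbf{u}_{K_{e}}\cdot\mathbf{n}\vert)$ rather than $\vert\overline{\mathbf{u}}_{e}\cdot\mathbf{n}\vert$. The paper's own proof makes this identification silently (it passes from $\vert\overline{H\mathbf{u}}_{e}^{n}\cdot\mathbf{n}_{e,K}\vert$ to $\mu\vert\overline{\mathbf{u}}_{e}^{n}\cdot\mathbf{n}_{e,K}\vert$ after dividing by $\max(\widehat{H},H)$ with no comment), so you have correctly located the one informal step of the argument; you just should have committed to the "sharper bookkeeping" option, which is a one-line triangle inequality, rather than leaving three alternatives on the table.
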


\begin{proof} The result being specific to each layer, we drop the
subscript \textit{``i''} for the sake of clarity. Gathering
\[
\dfrac{\Delta t}{m_{K}}{\sum_{e\in\partial K}}-{\big(\mathcal{F}_{e}^{n}.\textbf{n}_{e,K}\big)^{-}}{m_{e}}\leq\dfrac{\Delta t}{m_{K}}{\sum_{e\in\partial K}}\vert\mathcal{F}_{e}^{n}.\textbf{n}_{e,K}\vert{m_{e}}\,,
\]
and 
\[
{H_{K}^{n+1}}\geq{H_{K}^{n}}-\dfrac{\Delta t}{m_{K}}{\sum_{e\in\partial K}}\vert\mathcal{F}_{e}^{n}.\textbf{n}_{e,K}\vert{m_{e}}\,,
\]
we get: 
\[
\begin{split}\beta{H_{K}^{n+1}}-\dfrac{\Delta t}{m_{K}}{\sum_{e\in\partial K}}-{\big(\mathcal{F}_{e}^{n}.\textbf{n}_{e,K}\big)^{-}}{m_{e}}\geq\beta{H_{K}^{n}} & -(1+\beta)\dfrac{\Delta t}{m_{K}}{\sum_{e\in\partial K}}\vert{\overline{H\textbf{u}}_{e}^{n}}.\textbf{n}_{e,K}\vert{m_{e}}\\
 & -(1+\beta)\dfrac{\Delta t}{m_{K}}\gamma\Delta t{\sum_{e\in\partial K}}\left(\frac{\widehat{H}}{\text{\ensuremath{\Delta}}}\right)_{e}^{n}\dfrac{\vert{\boldsymbol{\delta}\Phi_{e}^{n}}.\textbf{n}_{e,K}\vert}{\varepsilon^{2}}{m_{e}}\,.
\end{split}
\]
From this, a sufficient condition to obtain (\ref{Robustesse}) can
be expressed locally as: 
\[
\begin{split} & (1+\beta)\dfrac{\Delta t}{m_{K}}\vert{\overline{H\textbf{u}}_{e}^{n}}.\textbf{n}_{e,K}\vert+(1+\beta)\gamma\Delta t\dfrac{\Delta t}{m_{K}}\left(\frac{\widehat{H}}{\text{\ensuremath{\Delta}}}\right)_{e}^{n}\dfrac{\vert{\boldsymbol{\delta}\Phi_{e}^{n}}.\textbf{n}_{e,K}\vert}{\varepsilon^{2}}\,\leq\beta\dfrac{{H_{K}^{n}}}{{m_{\partial K}}}\,,\end{split}
\]
This leads to: 
\begin{equation}
\mu\vert\overline{\textbf{u}}_{e}^{n}.\textbf{n}_{e,K}\vert+\mu^{2}\gamma\dfrac{\vert{\boldsymbol{\delta}\Phi_{e}^{n}}.\textbf{n}_{e,K}\vert}{\varepsilon^{2}}\leq\left(\dfrac{\beta}{1+\beta}\right)\xi_{e}^{n}\,,
\label{CFL2b}
\end{equation}
where $\mu=\Delta t\max\left(\dfrac{{m_{\partial K}}}{{m_{K}}},\dfrac{{m_{\partial K_{e}}}}{{m_{K_{e}}}}\right)$.
Since the right member of the previous inequality is lower than $1$,
we conclude that (\ref{CFL2b}) is ensured under (\ref{CFL2}).

\end{proof}

\begin{remark}\label{Rob2} $\boldsymbol{\delta}\Phi_{e,i}^{n}$
being in the order of the mesh size, the advective terms govern the
CFL condition (\ref{CFL2}), which is thus far less restrictive than
a time step restriction of the form (\ref{CFL_1}) in the case of
practical applications implying low Froude numbers. Note also that
in these contexts the water heights are far from zero, preventing
the quantity $\xi_{e,i}^{n}$ (\ref{xien}) from being arbitrarily
small. In practice, $\xi_{e,i}^{n}$ reduces to $\dfrac{\min\left({H_{K,i}^{n}},{H_{K_{e},i}^{n}}\right)}{\max\left({H_{K,i}^{n}},{H_{K_{e},i}^{n}}\right)}$
(see Remark \ref{Implicit}) and is very nearly $1$. In more general
terms, solutions are proposed in \citep{Bollermann2013},\citep{Bollermann2011}
to deal with wet/dry fronts when considering CFL conditions of the
form (\ref{CFL2}). From now, taking these aspects under consideration,
we assume that for all $\beta>0$ the positivity result (\ref{Robustesse})
holds under the CFL constraint (\ref{CFL_1}). In other terms:
\begin{equation}
\dfrac{\Delta t}{m_{K}}{\sum_{e\in\partial K}}-{\big(\mathcal{F}_{e,i}^{n}.\textbf{n}_{e,K}\big)^{-}}{m_{e}}\leq\beta{H_{K,i}^{n+1}}\,.\label{Robustesse2}
\end{equation}
In our stability results, we need $\beta=1/4$ (see (\ref{SK}) and
below){\small{}. }We numerically verified that this time step restriction
was always less restrictive than the classical explicit CFL condition
given in (\ref{CFL_1}), based on the gravity wave speed.{\small{}
}Note, however, that $\beta$ can be taken smaller to obtain relaxed
conditions on the stabilization constants $\alpha$ and $\gamma$
(see Remark \ref{Rq1}).\end{remark}

\subsection{Energy dissipation\label{dissip_energy}}

The main result of the current section concerns the dissipation of
the mechanical energy at the discrete level. Denoting ${E^{n}}={\displaystyle \sum_{K\in\mathbb{T}}{m_{K}}\left({\mathcal{E}_{K}^{n}}/\varepsilon^{2}+{\displaystyle {\sum_{i=1}^{L}}{\mathcal{K}_{K,i}^{n}}}\right)}$
the discrete energy at time $n$, we have the following result: \begin{theorem}\label{dissipation}
\textit{Control of the mechanical energy}

We consider the numerical scheme (\ref{mass},\ref{mom}), together
with the corrected potential (\ref{phiea},\ref{alpen}):
\[
{\Phi_{e,i}^{n,\ast}}={\overline{\Phi}_{e,i}^{n}}-{\Lambda_{e,i}^{n}}\quad,\quad{\Lambda_{e,i}^{n}}=\alpha\Delta t{C_{{\boldsymbol{\mathcal{H}}}}}\frac{{{\delta}(H\textbf{u})_{e,i}^{n}}}{\Delta_{e}}\,\,,
\]
and numerical fluxes (\ref{fe},\ref{pien}):
\[
\mathcal{F}_{e,i}^{n}={\overline{H\textbf{u}}_{e,i}^{n}}-\Pi_{e,i}^{n}\quad,\quad\Pi_{e,i}^{n}=\gamma\Delta t\left(\frac{\widehat{H}}{\text{\ensuremath{\Delta}}}\right)_{e,i}^{n}\dfrac{{\boldsymbol{\delta}\Phi_{e,i}^{n}}}{\varepsilon^{2}}\,\,,
\]
Assume that the time step is governed by an explicit CFL condition
(\ref{CFL_1}). Then, the stabilization constants 
\[
\gamma=4\;,\;\alpha=2
\]
ensure the control of the mechanical energy production: 
\begin{equation}
{E^{n+1}}-{E^{n}}\leq\,0\,.
\label{ControlE}
\end{equation}
\end{theorem}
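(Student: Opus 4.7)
The plan is to control $E^{n+1}-E^n$ by splitting the mechanical energy cell-wise into its potential and kinetic parts, applying an exact discrete chain rule to each, and then using (\ref{mass})--(\ref{mom}) to turn the increments into edge-based sums that either telescope or are manifestly dissipative. For the potential part, Hypothesis~\ref{Regularity} makes $\mathcal{E}$ quadratic with symmetric Hessian $\boldsymbol{\mathcal{H}}$, so the exact Taylor expansion
\[
\mathcal{E}_K^{n+1}-\mathcal{E}_K^n \;=\; \Phi_K^n\cdot(\boldsymbol{H}_K^{n+1}-\boldsymbol{H}_K^n) \;+\; \tfrac{1}{2}(\boldsymbol{H}_K^{n+1}-\boldsymbol{H}_K^n)^T\boldsymbol{\mathcal{H}}(\boldsymbol{H}_K^{n+1}-\boldsymbol{H}_K^n)
\]
holds. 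For the kinetic part I would use the identity
\[
\mathcal{K}_{K,i}^{n+1}-\mathcal{K}_{K,i}^n \;=\; \textbf{u}_{K,i}^n\cdot\bigl(H_{K,i}^{n+1}\textbf{u}_{K,i}^{n+1}-H_{K,i}^n\textbf{u}_{K,i}^n\bigr) \;+\; \tfrac{1}{2}H_{K,i}^{n+1}\norm{\textbf{u}_{K,i}^{n+1}-\textbf{u}_{K,i}^n}^2 \;-\; \tfrac{1}{2}\norm{\textbf{u}_{K,i}^n}^2(H_{K,i}^{n+1}-H_{K,i}^n).
\]
The piece $-\tfrac{1}{2}\|\textbf{u}^n\|^2(H^{n+1}-H^n)$ combines cleanly with the advective flux, while $\Phi_K^n\cdot(\boldsymbol{H}_K^{n+1}-\boldsymbol{H}_K^n)/\varepsilon^2$ becomes the discrete counterpart of the power $H\textbf{u}\cdot\nabla\Phi/\varepsilon^2$ as soon as (\ref{mass}) is inserted.

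I would then substitute (\ref{mass}) in the potential increment and (\ref{mom}) in the linear kinetic term, and re-index the resulting cell sums as sums over edges $e=\partial K\cap\partial K_e$. Using $\textbf{n}_{e,K}=-\textbf{n}_{e,K_e}$, the centred parts $\overline{H\textbf{u}}_{e,i}^n$ and $\overline{\Phi}_{e,i}^n$ produce antisymmetric edge contributions that collapse in pairs (or vanish under the boundary treatment), while by (\ref{RelPhi}) and the definition (\ref{hmu}) the correctors $\Pi_{e,i}^n$ and $\Lambda_{e,i}^n$ from (\ref{pien})--(\ref{alpen}) leave behind two manifestly non-positive dissipation reservoirs, of the form $-\gamma\,\Delta t\sum_{e,i}m_e(\widehat{H}/\Delta)_{e,i}^n\,\|\boldsymbol{\delta}\Phi_{e,i}^n\|^2/\varepsilon^4$ and $-\alpha\,\Delta t\sum_{e,i}m_e(C_{\boldsymbol{\mathcal{H}}}/\Delta_e)\,\|{\delta}(H\textbf{u})_{e,i}^n\|^2/\varepsilon^2$.

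The delicate step, which I expect to be the main obstacle, is to show that the three non-negative ``error'' terms produced by the time discretization---the Hessian quadratic $\tfrac{1}{2}(\boldsymbol{H}^{n+1}-\boldsymbol{H}^n)^T\boldsymbol{\mathcal{H}}(\boldsymbol{H}^{n+1}-\boldsymbol{H}^n)/\varepsilon^2$, the kinetic jump $\tfrac{1}{2}H^{n+1}\norm{\textbf{u}^{n+1}-\textbf{u}^n}^2$, and the positive upwind cross-term in the advective momentum flux---are absorbed edge by edge by the two reservoirs above. To do so I would substitute the explicit flux expression (\ref{mass}) for $\boldsymbol{H}^{n+1}-\boldsymbol{H}^n$ and (\ref{u1}) for $\textbf{u}^{n+1}-\textbf{u}^n$, bound the Hessian quadratic through (\ref{HL2}), and transform cell sums into edge sums via the geometric weight (\ref{eq:d}) together with the implicit choice of $\widehat{H}$ described in Remark~\ref{Implicit}. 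The upwind cross-term is then controlled by the robustness bound (\ref{Robustesse2}) with $\beta=1/4$, while the pressure-gradient term is controlled by the acoustic part of the CFL condition (\ref{CFL_1}). A careful accounting of the numerical factors in the resulting edge-wise scalar inequality should reveal that $\alpha\geq 2$ is just sufficient to absorb the Hessian-type error through $C_{\boldsymbol{\mathcal{H}}}$, and $\gamma\geq 4$ is just sufficient to absorb the velocity-jump error through the acoustic CFL margin, yielding the calibration announced in the theorem and the inequality (\ref{ControlE}).
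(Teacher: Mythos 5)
Your proposal follows essentially the same route as the paper's proof in Appendix \ref{Proofs}: the same cell-wise kinetic/potential splitting with the exact Taylor expansion of $\mathcal{E}$ and the discrete kinetic identity, the same exact cancellation of the pressure-work terms, the same two dissipation reservoirs generated by $\Pi_{e,i}^{n}$ and $\Lambda_{e,i}^{n}$, and the same absorption of the three positive residuals via Jensen/Cauchy--Schwarz, the robustness bound (\ref{Robustesse2}) with $\beta=1/4$, and the explicit CFL condition. One small correction: the conditions on $\gamma$ and $\alpha$ are not one-sided as your closing sentence suggests (``$\gamma\geq4$'', ``$\alpha\geq2$''); because the correctors feed back quadratically into the residuals (through $\Pi^{2}$ in the Hessian remainder and $\Lambda^{2}$ in the velocity jump), the final edge-wise balance reduces to upward parabolas $p(\gamma)\leq0$ and $q(\alpha)\leq0$ that are negative only on bounded intervals, of which $\gamma=4$ and $\alpha=2$ are admissible points under the stated CFL restriction on $\rho_{\varepsilon}$.
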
 To establish the announced result, we first give an
estimate for the kinetic and potential energy productions, and finally
show that the choice $\gamma=4$ and $\alpha=2$ in (\ref{alpen})
and (\ref{pien}) allows a global control of these contributions.
The proof is given in Appendix \ref{Proofs} and organized around
the following steps: 
\begin{itemize}
\item $\#A$ - Estimation of the kinetic energy production (Appendix \ref{App-propK},
Proposition \ref{propK}). 
\item $\#B$ - Estimation of the potential energy production (Appendix \ref{App-propE},
Proposition \ref{propE}). 
\item $\#C$ - Control of the mechanical energy (Appendix\textbf{ }\ref{TotalE}):
we gather the two inequalities resulting from $\#A$ and $\#B$ to
deduce a sufficient condition on the stabilization constants $\gamma$
and $\alpha$ present in the correction terms (\ref{alpen}, \ref{pien})
. 
\end{itemize}
\begin{remark}\label{Rq0} The proof is mainly based on the negativity
of the quadratic polynomials given in (\ref{p}) and (\ref{q}), which
dominant coefficient is expressed in terms of the following quantity:
\[
\rho_{\varepsilon}^{2}=2\dfrac{\left(\Delta t\right)^{2}}{\varepsilon^{2}}\frac{C_{\boldsymbol{\mathcal{H}}}}{\Delta_{e}}\left(\frac{\widehat{H}}{\text{\ensuremath{\Delta}}}\right)_{e,i}^{n}\,.
\]

A basic analysis of the discriminant gives admissibiliy conditions
on $\rho_{\varepsilon}$ that are expected to limit the time step.
In one dimension and in the single layer case for instance, the quantity
$\rho_{\varepsilon}$ reduces to $2{\displaystyle \dfrac{\Delta t}{\Delta x}\dfrac{c}{\varepsilon}}$,
where $c=\sqrt{gh}$ is the gravity wave speed, so that the smallness
assumptions made on $\rho_{\varepsilon}$ are satisfied under a classical
explicit CFL condition (i.e. of the form (\ref{CFL_1})). This is
also the case for the general $L$ layers case in two dimensions,
where the conditions required on $\rho_{\varepsilon}$ are always
satisfied with such a time constraint.\end{remark}

\begin{remark}\label{Rq1}As it has been confirmed by our numerical
experiments, if the values $\gamma=4$ and $\alpha=2$ ensure a global
decrease of the mechanical energy, they also bring too much diffusion
in practice, entailing dramatic restrictions on the space step. This
compels us to seek for relaxed conditions, that can be extracted from
a more general (and complex) analysis of the discrete energy budgets,
not described here for the sake of readability. As a matter of fact,
the optimality of the current approach has been lost within the Jensen's
inequalities used during the estimations of the kinetic and potential
energies (formulas (\ref{JensenK}) and (\ref{JensenP}) respectively).
If an explicit choice has been made on the weights to make things
more concrete, a global result can be established introducing a general
set of constants in these two inequalities. Playing with these parameters
and $\beta$ (see CFL condition (\ref{CFL2})), one can significantly
relax the conditions on the stabilization constants. In the single
layer case and one-dimensional problem for instance, the condition
on $\gamma$ becomes: 
\begin{equation}
\gamma\in\left[\gamma^{-},\gamma^{+}\right]\quad,\quad\text{with}\quad\gamma^{\pm}=\dfrac{1\pm\sqrt{1-\rho_{\varepsilon}^{2}}}{\rho_{\varepsilon}^{2}}\,,\label{gamma_law}
\end{equation}
where we recall that $\rho_{\varepsilon}=2\dfrac{\Delta t}{\Delta x}\dfrac{c}{\varepsilon}$.
A very close result is obtained for $\alpha$:

\begin{equation}
\alpha\in\left[\alpha^{-},\alpha^{+}\right]\quad,\quad\text{with}\quad\alpha^{\pm}=\dfrac{1\pm\sqrt{1-2\rho_{\varepsilon}^{2}}}{2\rho_{\varepsilon}^{2}}\,.\label{alpha_law}
\end{equation}
When $\rho_{\varepsilon}$ (or equivalently the CFL number) decreases,
a more important latitude regarding the choice of $\gamma$ and $\alpha$
is obtained, as illustrated in Fig.\ref{AGK}. And when $\rho_{\varepsilon}$
tends to zero, one recovers the critical value $\gamma=\alpha=1/2$.

\begin{figure}[H]
\begin{centering}
\includegraphics[width=0.49\textwidth]{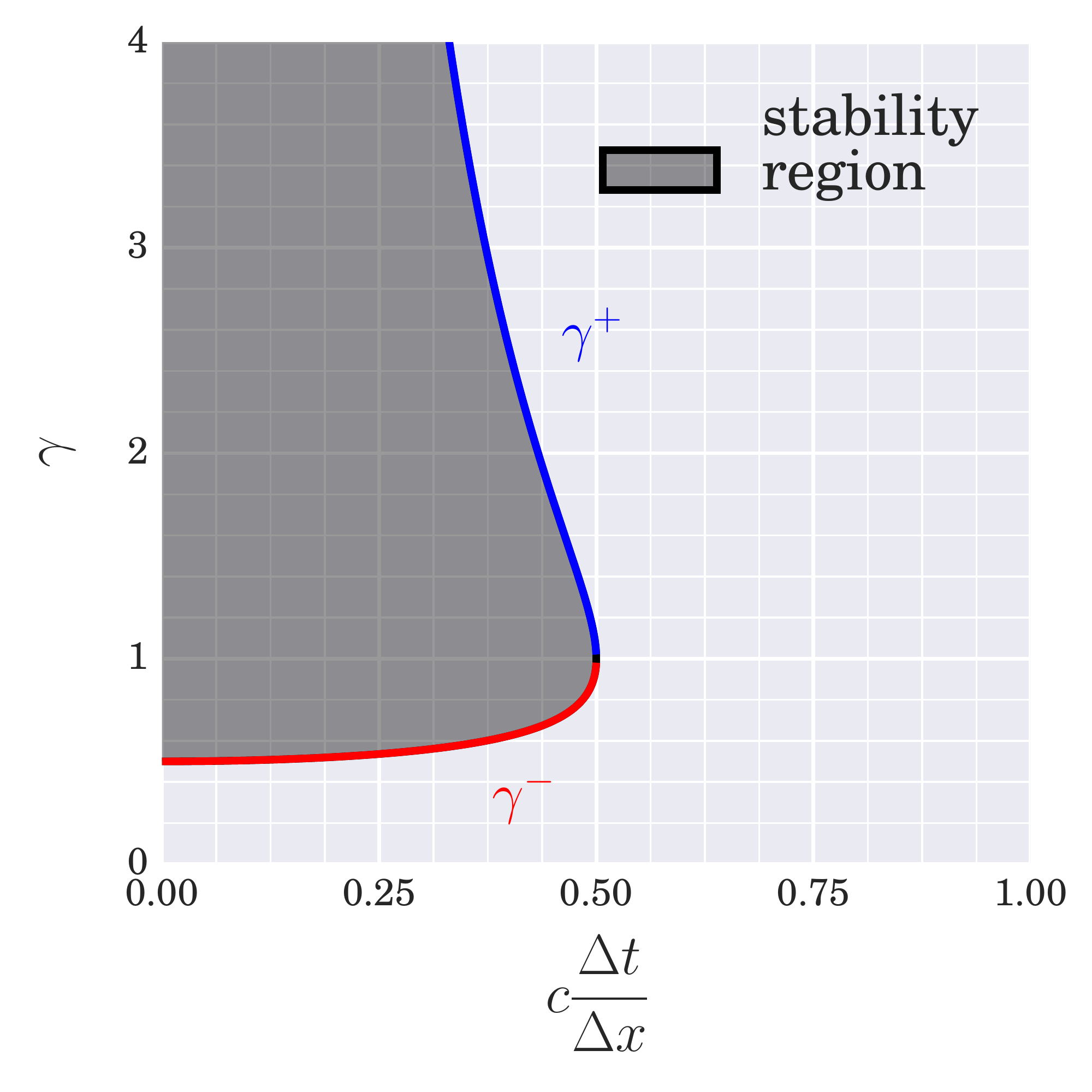}\includegraphics[width=0.49\textwidth]{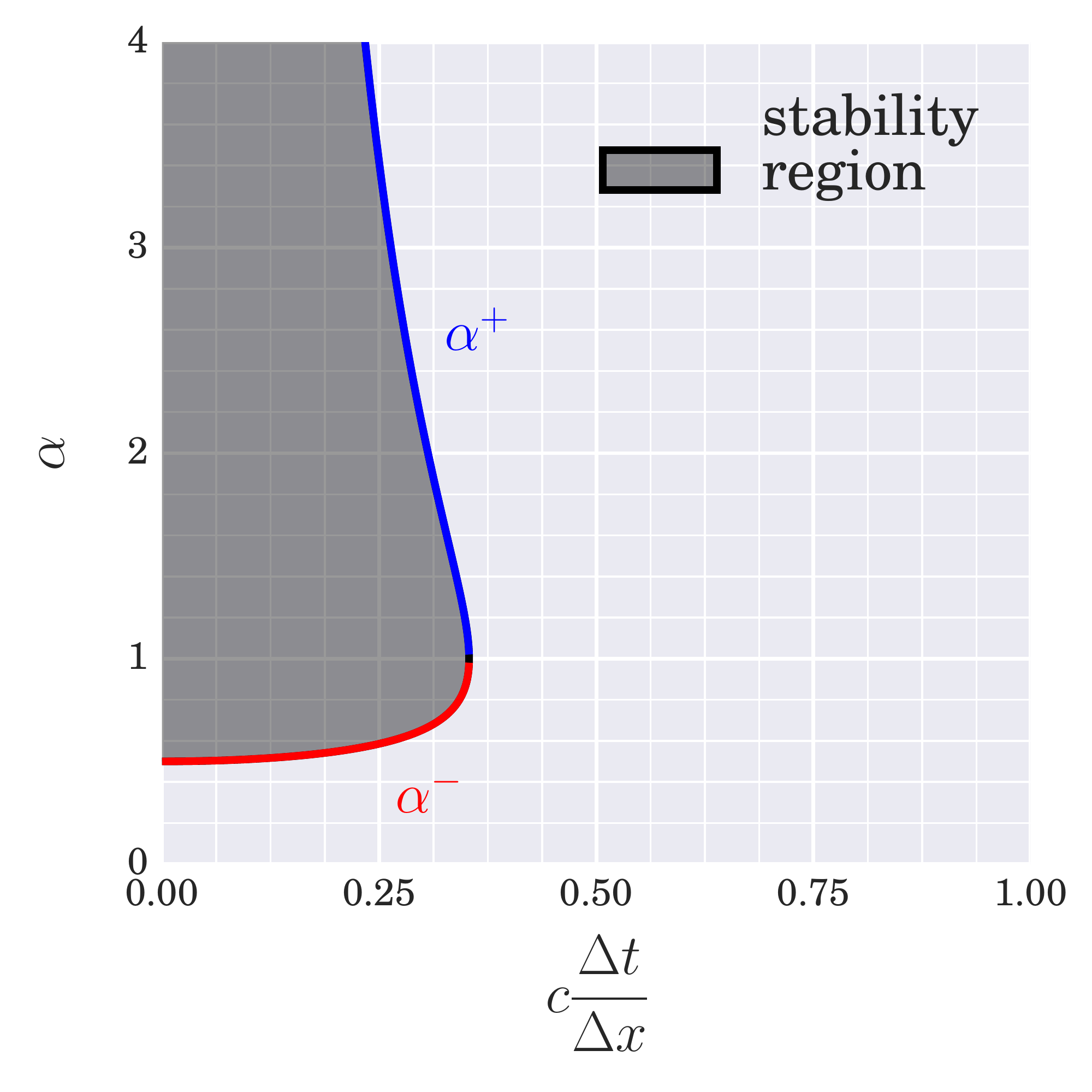}
\par\end{centering}
\caption{Non-linear discrete analysis: evolution of the lower and upper bounds
for $\gamma$ (\textit{left}) and $\alpha$ (\textit{right}) with
respect to the CFL number, based respectively on (\ref{gamma_law})
and (\ref{alpha_law}).\label{AGK} }
\end{figure}
As a result, one can get stability taking $\alpha$ and $\gamma$
in the vicinity of $1/2$ at first-order in space and time, even in
the general case of arbitrary stratifications. As it will be discussed
later, less restrictive conditions will be extracted from the linear
stability analysis (see \S \ref{linear}) with the use of MUSCL space
scheme (Appendix \ref{subsec:second-order-scheme}) coupled with the
Heun's method for time discretization (Appendix \ref{subsec:time-stepping-coriolis}).
Indeed, the stabilizing effects of the second-order time scheme allow
to considerably relax the stabilization constants, in conformity with
our numerical observations.\end{remark} 

\begin{remark}\label{Explicit} 

As discussed in Remark \ref{Implicit}, the rigorous definition of
$\widehat{H}_{K}^{n}$ appearing in the numerical fluxes through (\ref{pien},\ref{hmu})
given in (\ref{hath}) implies an implicit time step. With the support
of some numerical experiments, we already motivated the reasons of
substituting $H_{K}^{n}$ to $\widehat{H}_{K}^{n}$ for practical
applications, since this simplified choice only introduces an error
in the order of $\Delta t$ and does not change the asymptotic behaviour
of the scheme. However, we have to specify here that at the price
of being slightly more restrictive, a fully explicit stability condition
can be given. The strategy implies a global calibration of the stabilization
parameters, (i.e. for which we set $\widehat{H}_{K,i}^{n}=\widehat{H}_{i}^{n}\thinspace,\thinspace\forall\thinspace K\in\mathbb{T}$),
allowing to reduce to the study of a cubic polynomial (rather than
quadratic) at the level of each element. For the sake of readability
and to alleviate the proofs, we made the choice of presenting the
scheme in its present form.\end{remark}

\begin{remark}\label{Control}

As it has been discussed above, the negativity domain of the polynomials
$p$ and $q$ defined in (\ref{p}) and (\ref{q}) respectively can
be enlarged by diminishing the CFL. One of the consequences is that
the control (\ref{ControlE}) can be extended to obtain a strict mechanical
energy decrease. More precisely, let us consider a small parameter
$\delta>0$, and a combination of values $\left(\Delta t,\alpha,\gamma\right)$
satisfying (\ref{p}) and (\ref{q}). Considering the dominant coefficient
of $p$ and $q$, one easily obtains $p(\gamma)<-\delta$ and $q(\alpha)<-\delta$
with a time step $\Delta t$ subject to an $\mathcal{O}(\delta)$
perturbation. Then, gathering (\ref{gamma}) and (\ref{alpha}), we
obtain:
\begin{equation}
\begin{split}{E^{n+1}}-{E^{n}}\leq\, & -\delta\left(\Delta t\right)^{2}\sum_{K}{\sum_{i=1}^{L}}{\sum_{e\in\partial K}}\left(\frac{\widehat{H}}{\text{\ensuremath{\Delta}}}\right)_{e,i}^{n}\norm{\dfrac{{\boldsymbol{\delta}\Phi_{e,i}^{n}}}{\varepsilon^{2}}}^{2}{m_{e}}\,\\
 & -\delta\left(\Delta t\right)^{2}\sum_{K}{\sum_{i=1}^{L}}{\sum_{e\in\partial K}}{C_{{\boldsymbol{\mathcal{H}}}}}\frac{1}{\Delta_{e}}\left(\dfrac{{{\delta}(H\textbf{u})_{e,i}^{n}}}{\varepsilon}\right)^{2}{m_{e}}\,.
\end{split}
\label{Strict_decrease}
\end{equation}
These estimates give a control of $L^{1}(0,T,H_{w}^{1}(\Omega))(u)$
with some ad hoc weighted semi-norm on $H_{w}^{1}$. They insure validity
of Lax Wendroff type theorem for weak consistency of conservative
terms (in divergence form) in mass, momentum and energy equations.
We refer to \citep{Eymard2000} and also \citep{Vila2003} for further
details concerning the use of such estimates to study consistency
and convergence of the methods.\end{remark}

\subsection{Linear stability analysis\label{linear}}

We aim here at assessing the relevance of the previous energy dissipation
considerations through linear stability arguments. For the sake of
clarity the developments of the current section are given for the
one-dimensional problem, considering a regular mesh and the one layer
case ($L=1$). We specify at the end how these results can be easily
extended to the two-dimensional problem. The elements will be indexed
by $k$ and we denote $\mathcal{F}_{k+1/2}^{n}$ the numerical edge
flux between the cells $k$ and $k+1$. Let us take the example of
negative fluxes, for which we have:
\[
\begin{array}{l}
\left(\mathcal{F}_{k+1/2}^{n}\right)^{-}=\mathcal{F}_{k+1/2}^{n}\quad\text{and}\quad\left(\mathcal{F}_{k+1/2}^{n}\right)^{+}=0\,.
\end{array}
\]
In that context the equations (\ref{mass}), (\ref{u1}) constituting
the first-order scheme simplify as follows: 
\begin{equation}
\begin{split} & H_{k}^{n+1}=H_{k}^{n}-\dfrac{\Delta t}{\Delta x}\left(\mathcal{F}_{k+1/2}^{n}-\mathcal{F}_{k-1/2}^{n}\right)\,,\\
 & u_{k}^{n+1}=u_{k}^{n}-\dfrac{\Delta t}{\Delta x}\left(\dfrac{u_{k+1}^{n}-u_{k}^{n}}{H_{k}^{n}}\mathcal{F}_{k+1/2}^{n}\right)-\dfrac{\Delta t}{\Delta x}{\color{teal}{\normalcolor \dfrac{H_{K,i}^{n}}{H_{K,i}^{n+1}}}}\left(\Phi_{k+1/2}^{n,\ast}-\Phi_{k-1/2}^{n,\ast}\right)\,.
\end{split}
\label{scheme2}
\end{equation}
with the following numerical mass flux:
\begin{equation}
\mathcal{F}_{k+1/2}^{n}=\dfrac{H_{k}^{n}u_{k}^{n}+H_{k+1}^{n}u_{k+1}^{n}}{2}-2\gamma\dfrac{\Delta t}{\Delta x}\left(\dfrac{H_{k}^{n}+H_{k+1}^{n}}{2}\right)\left(\dfrac{\Phi_{k+1}^{n}-\Phi_{k}^{n}}{2}\right)\,,
\end{equation}
and a corrected potential of the form:
\begin{equation}
\Phi_{k+1/2}^{n,\ast}=\left(\dfrac{\Phi_{k+1}^{n}+\Phi_{k}^{n}}{2}\right)-2\alpha\dfrac{\Delta t}{\Delta x}C_{\boldsymbol{\mathcal{H}}}\left(\dfrac{H_{k+1}^{n}u_{k+1}^{n}-H_{k}^{n}u_{k}^{n}}{2}\right)\,.\label{eq:Phi1d}
\end{equation}
The scheme (\ref{scheme2}) is linearized around the constant state
$\bar{w}=\left(\bar{H},\bar{u}\right)$. Introducing a generic perturbation
$\tilde{w}_{k}^{n}=\left(\tilde{H}_{k}^{n},\tilde{u}_{k}^{n}\right)$
on the flow, we write:
\[
H_{k}^{n}=\bar{H}+\tilde{H}_{k}^{n}\quad u_{k}^{n}=\bar{u}+\tilde{u}_{k}^{n}\,,
\]
to obtain the following linearized system:

\begin{subequations}
\begin{empheq}[left=\empheqlbrace]{align}
{\tilde{H}_{k}^{n+1}}&={\tilde{H}_{k}^{n}}-\dfrac{\Delta t}{\Delta x}\left[\bar{H}\delta^{n}[\tilde{u}_{k}]+\bar{u}\delta^{n}[\tilde{H}_{k}]-2\gamma\bar{\Phi}_{H}\dfrac{\Delta t}{\Delta x}\bar{H}\Delta^{n}[\tilde{H}_{k}]\right]\,,\label{linearized_1}
\\ {\tilde{u}_{k}^{n+1}}&={\tilde{u}_{k}^{n}}-\dfrac{\Delta t}{\Delta x}\left[\bar{\Phi}_{H}\delta^{n}[\tilde{H}_{k}]+\bar{u}d_{+}^{n}[{\tilde{u}_{k}^{n}}]-2\alpha{C_{{\boldsymbol{\mathcal{H}}}}}\dfrac{\Delta t}{\Delta x}\left(\bar{H}\Delta^{n}[\tilde{u}_{k}]+\bar{u}\Delta^{n}[\tilde{H}_{k}]\right)\right]\, \label{linearized_2}    \end{empheq} \end{subequations}where we have set $\bar{\Phi}_{H}=\partial_{H}\Phi_{\vert\bar{H}}$,
and with the following discrete operators:
\[
\delta^{n}[f]=\dfrac{f_{k+1}^{n}-f_{k-1}^{n}}{2}\quad,\quad\Delta^{n}[f]=\dfrac{f_{k+1}^{n}+f_{k-1}^{n}-2f_{k}^{n}}{2}\quad,\quad d_{+}^{n}[f]=f_{k+1}^{n}-f_{k}^{n}\,.
\]
Looking classically for solutions of the form $w_{k}^{n}=\widehat{w}^{n}e^{{\displaystyle ik\Delta x}}$
to the system (\ref{linearized_1}, \ref{linearized_2}), we obtain
the following amplification matrix:

\[
\widehat{w}^{n+1}=\left(\begin{array}{c|c}
1\;-\;i\,\left(\dfrac{\Delta t}{\Delta x}\right)\,\bar{u}\,\sin\left(\Delta x\right) & -i\,\left(\dfrac{\Delta t}{\Delta x}\right)\,\bar{H}\,\sin\left(\Delta x\right)\\
+2\,\gamma\,\left(\dfrac{\Delta t}{\Delta x}\right)^{2}\,\bar{\Phi}_{H}\,\bar{H}\,\left(\cos\left(\Delta x\right)-1\right)\\
\\
\hline \\
-i\,\left(\dfrac{\Delta t}{\Delta x}\right)\,\bar{\Phi}_{H}\,\sin\left(\Delta x\right) & 1\;-\;\left(\dfrac{\Delta t}{\Delta x}\right)\,\bar{u}\,\left(e^{{\displaystyle i\,\Delta x}}-1\right)\\
+2\,\alpha\,\left(\dfrac{\Delta t}{\Delta x}\right)^{2}\,C_{\boldsymbol{\mathcal{H}}}\,\bar{u}\,\left(\cos\left(\Delta x\right)-1\right) & +2\,\alpha\,\left(\dfrac{\Delta t}{\Delta x}\right)^{2}\,C_{\boldsymbol{\mathcal{H}}}\,\bar{H}\,\left(\cos\left(\Delta x\right)-1\right)
\end{array}\right)\widehat{w}^{n}\,.
\]

If we now focus on the case $L=1$, one should note that the potential
energy is given by $\mathcal{E}=\frac{1}{2}gh^{2}$, and we have $\mathcal{H}=C_{\boldsymbol{\mathcal{H}}}=\bar{\Phi}_{H}=g$
(see (\ref{HL2}) and (\ref{Hessian})). Then the previous amplification
matrix characteristic polynomial induces a relation between the CFL
(i.e. $\bar{c}\dfrac{\Delta t}{\Delta x}$ where $\bar{c}=\sqrt{g\bar{H}}$)
and the stabilization parameters $\gamma$ and $\alpha$. The stabilization
constants are then substituted to their sum and product since it appears
obvious performing calculations. To illustrate that the sum $\gamma+\alpha$
is the main criteria to achieve linear stability and the product $\gamma\alpha$
a secondary influence, we propose several series of analysis in the
one-dimensional shallow water case developed around $\bar{u}=0$,
allowing to draw up a sampling of the linear stability domain, considering
two particular case studies: $\alpha=\gamma$ and $\alpha\gamma=0$.

Fig.\ref{GK1} (\textit{left}) shows the admissible range of CFL numbers
with respect to $\alpha+\gamma$ to achieve linear stability for the
two particular case studies. This analysis highlights $\alpha+\gamma=1$
as a necessary stability condition and a maximum CFL of 1 when $\alpha=\gamma$,
and a maximum CFL of $1/\sqrt{2}$ when $\alpha\gamma=0$. Even if
the maximum admissible CFL is reduced, it is a remarkable result to
find that taking one of the two stabilization constants to zero can
be sufficient to obtain linear stability. These results may be set
in relation with the optimized stability criteria issuing from the
non-linear study, that is the one-dimensional relaxed condition (\ref{gamma_law})
discussed in Remark \ref{Rq1}. As the non linear study requires both
$\alpha$ and $\gamma$ to be strictly positive, only the case $\alpha=\gamma$
is explored in Fig.\ref{GK1} (\textit{right}). As expected, the study
conducted in \S \ref{dissip_energy}, based on a strict energy dissipation
criteria, is more restrictive and fully embedded in the linear analysis.

\begin{figure}[!tbh]
\begin{centering}
\includegraphics[width=0.49\textwidth]{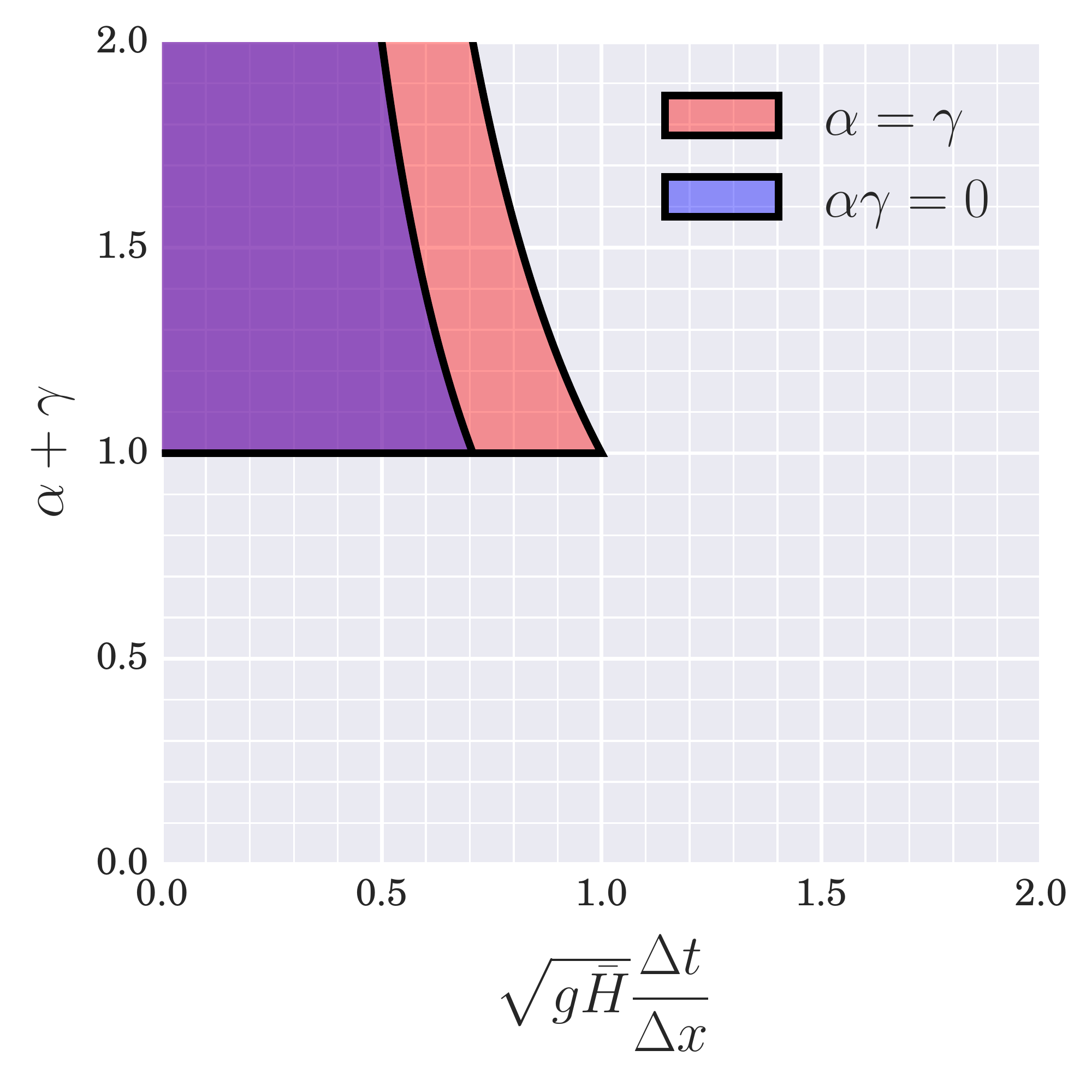} \includegraphics[width=0.49\textwidth]{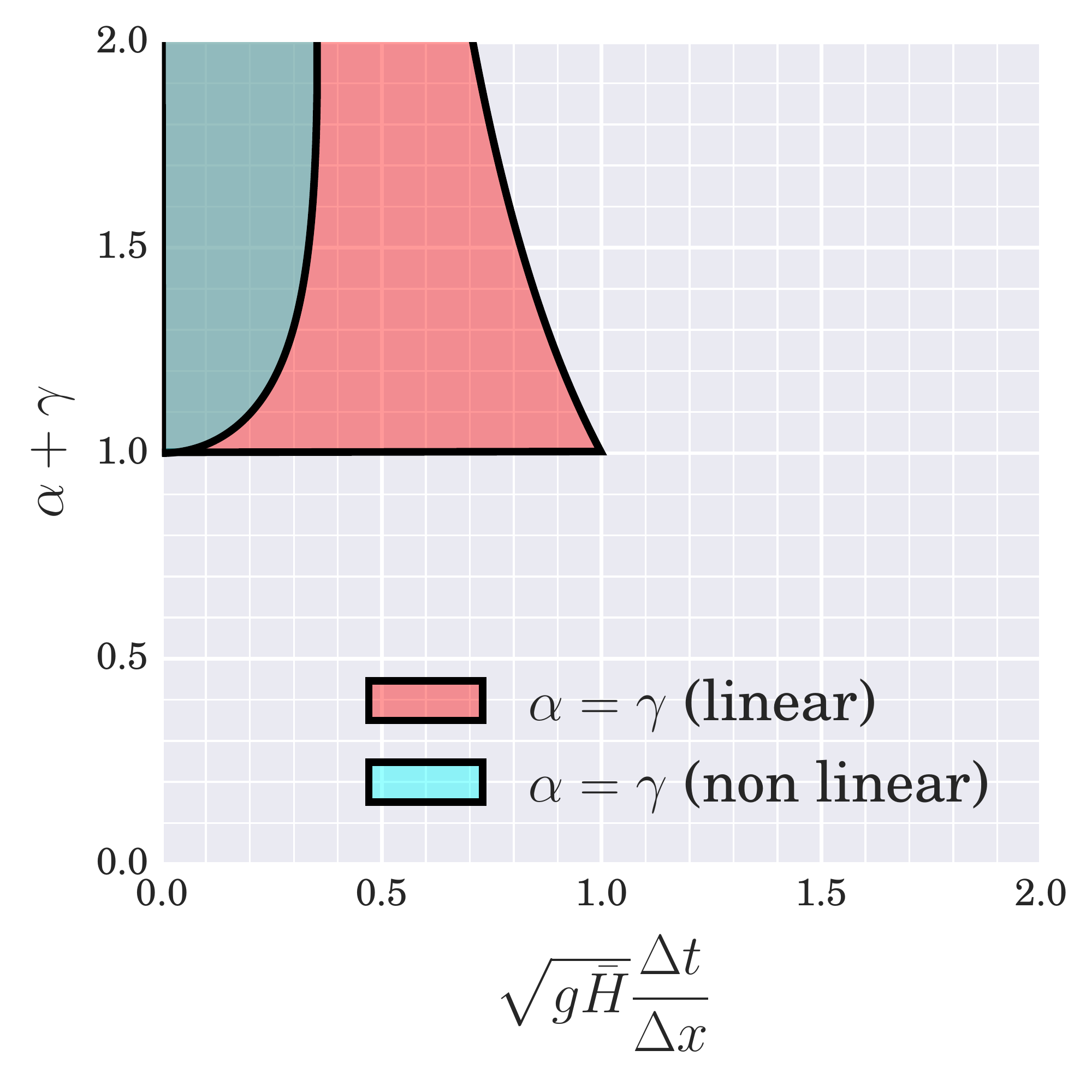} 
\par\end{centering}
\caption{One-dimensional linear stability analysis: (CFL, $\alpha+\gamma$)
sampling in the particular cases $\alpha=\gamma$ (\textit{red}) and
$\alpha\gamma=0$ (\textit{blue}) at first-order in space and time
(\textit{left}). Comparison with the relaxed condition issuing from
(\ref{gamma_law}) in the case $\alpha=\gamma$ (\textit{right}).
Two-dimensional linear stability analysis involves a rescaling of
the CFL numbers dividing them by\textcolor{teal}{{} ${\normalcolor \sqrt{2}}$.}
\label{GK1}}
\end{figure}

\begin{figure}[!tbh]
\begin{centering}
\includegraphics[width=0.49\textwidth]{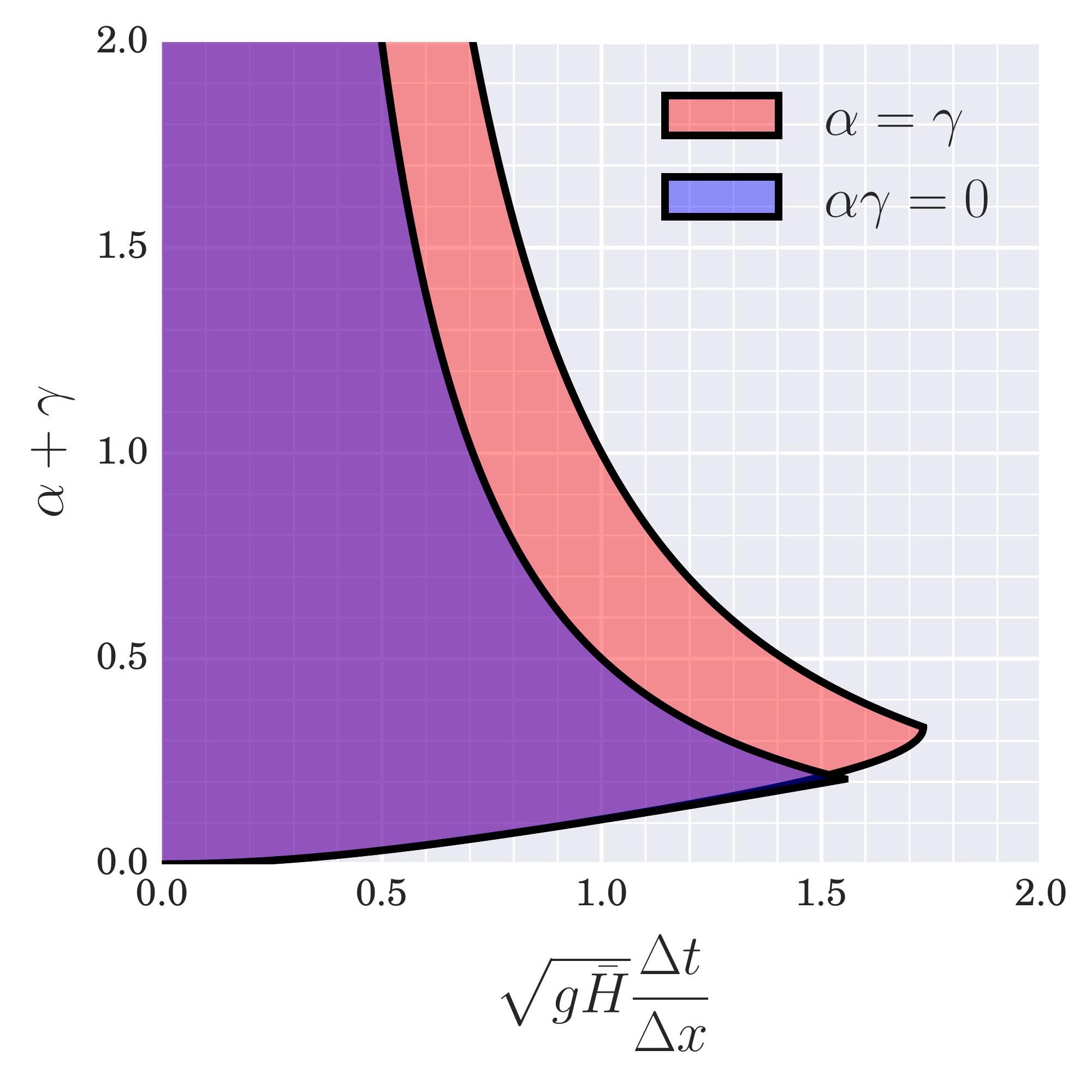} \includegraphics[width=0.49\textwidth]{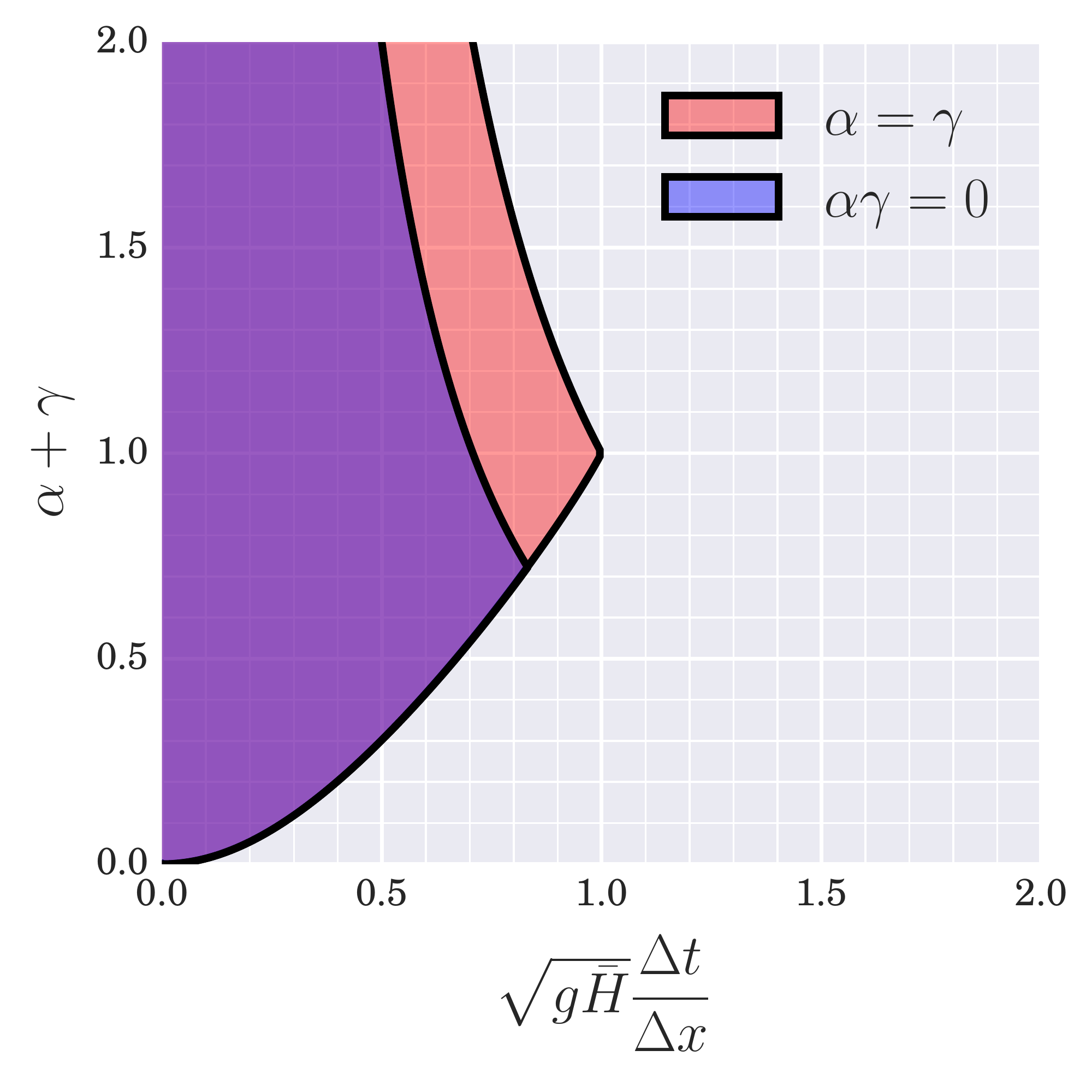} 
\par\end{centering}
\caption{One-dimensional linear stability analysis: (CFL, $\alpha+\gamma$)
sampling in the particular cases $\alpha=\gamma$ (\textit{red}) and
$\alpha\gamma=0$ (\textit{blue}) with a Heun's time stepping scheme.
First-order in space (\textit{left}) and second-order MUSCL reconstruction
scheme in space (\textit{right}). Two-dimensional linear stability
analysis involves a rescaling of the CFL numbers dividing them by\textcolor{teal}{{}
${\normalcolor \sqrt{2}}$.} \label{AG1}}
\end{figure}

As concerns the increase of time and space accuracy, if it is difficult
to exhibit explicit conditions based on the fully discrete model (we
refer however to Appendix \ref{subsec:Formal-second-order} for an
extension to MUSCL schemes), some interesting results can be established
in the linear case. Other series of tests were made integrating a
second-order MUSCL reconstruction in space, together with the Heun's
method for time discretization (see Appendix \ref{subsec:MUSCL} and
\ref{subsec:time-stepping-coriolis} for implementations purposes).
From a general point of view, the improvement of time order comes
with the possibility of substantial practical enhancements. As regards
first-order in space, the CFL can be increased and the admissible
range for $\gamma$ and $\alpha$ is significantly larger, as illustrated
in Fig.\ref{AG1} (\textit{left}). In particular, $\gamma$ and $\alpha$
can both be taken arbitrarily small at the price of sufficient time
step restrictions. The regularizing virtues of the second-order time
algorithm are still observed when considering a MUSCL reconstruction
(see Fig.\ref{AG1} (\textit{right})). These results are in accordance
with those provided by our simulations in linear regimes (see the
dedicated Section \ref{sec:Numerical-test-cases}). These conclusions
are of major interest from the extent that minimizing the diffusive
losses is essential in our applicative contexts.

All the previous results can be easily extended to the two-dimensional
problem. A first remarkable result is that the CFL numbers need to
be rescaled dividing by $\sqrt{2}$, and not $2$ as it could be anticipated.
A second result is that the $\alpha$ stabilization constant has to
be two times smaller to retrieve the one-dimensional results, obtained
with the corrected potential pressure stabilization term (\ref{eq:Phi1d}).

\section{Asymptotic regimes\label{sec:Asymptotic-regimes}}

We show in this part the asymptotic preserving features of the current
approach. Since the scheme reduces to a convex combination of 1d schemes
(see Appendix \ref{subsec:Reformulation-as-convex}), only the 1d
case is investigated. In the one-dimensional frame, for a given time
step $\Delta t$ and space step ${\Delta x}$, the numerical scheme
(\ref{mass}, \ref{mom}) can be interpreted at the semi-discrete
level as follows:

\begin{subequations}
\begin{empheq}[left=\empheqlbrace]{align}
H_{i}^{n+1}- H_{i}^{n}  &=  {\displaystyle \Delta t}\partial_{x}(Hu)_{i}^{n} + \left(\Delta t\right)^{2}\gamma\partial_{x}\left(H_{i}\dfrac{\partial_{x}{\Phi_{i}}}{\varepsilon^{2}}\right)^{n}\label{semi:mass}\\ 
(Hu)_{i}^{n+1}-(Hu)_{i}^{n}&=-\Delta t\left(\partial_{x}\left(\bar{u}_{i}(Hu)_{i}^{\ast}\right)\right)^{n}\vspace{1mm} \label{semi:mom}\\
&  \qquad -\Delta t\left(H_{i}\dfrac{\partial_{x}{\Phi_{i}}}{\varepsilon^{2}}\right)^{n}+\left(\Delta t\right)^{2}\alpha\left(H_{i}\dfrac{\partial_{xx}(Hu)_{i}}{\varepsilon^{2}}\right)^{n} \nonumber \, ,
\end{empheq} \end{subequations}where $(Hu)_{i}^{\ast}=(Hu)_{i}-\Delta t\gamma\left(H_{i}\dfrac{\partial_{x}{\Phi_{i}}}{\varepsilon^{2}}\right)$
, and $\bar{u}_{i}$ stands for the velocity $u_{i}$ perturbed with
a $\mathcal{O}({\Delta x})$ viscosity term resulting from the upwind
strategy on the momentum equations. Note that the space step is submitted
to a classical explicit CFL condition of the form: 
\begin{equation}
\dfrac{\Delta t}{\Delta x}\left(u+\dfrac{c}{\varepsilon}\right)\leq cste\,.\label{CFL_1D}
\end{equation}
Of course, a fully discrete analysis can be proposed, as done in \citep{Parisot2015}.
Nevertheless, at the end of the day, reformulating the scheme (\ref{mass},
\ref{mom}) in terms of discrete operators in one dimension, we are
left with the study of the semi-continuous scheme (\ref{semi:mass},
\ref{semi:mom}) subject to an $\mathcal{O}(\Delta x)$ perturbation,
which has no incidence on the asymptotic beahaviour. Thus, in this
section, the results will be established at the continuous level in
space for the sake of simplicity.

\subsection{Fine time scale}

For small time scale $t=\varepsilon\tau$ the model (\ref{Model0})
degenerates toward a system of wave equations (see \citep{Strauss1992},
\citep{Bresch2011}): 
\begin{equation}
\begin{aligned}\partial_{\tau\tau}^{2}H_{i}-\mathrm{div}(H_{i}\nabla{\Phi_{i}})=0\,.\end{aligned}
\label{Wave}
\end{equation}

\begin{theorem}Consistency with the wave equations (\ref{Wave}):
\\
 Consider the time step scaling $\Delta t=\varepsilon\Delta\tau$.
The semi-discrete model (\ref{semi:mass},\ref{semi:mom}) furnishes
an approximation of the wave equations (\ref{Wave}) with an error
in the order of $\mathcal{O}(\Delta\tau)$.\end{theorem}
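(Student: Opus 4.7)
My strategy is to mimic at the discrete level the classical derivation of \eqref{Wave} from the full model: differentiate the mass equation in time, substitute the momentum equation, and observe that the $1/\varepsilon^{2}$ prefactor of the pressure cancels against the fast time scale $t=\varepsilon\tau$. Written at the discrete level this yields a three-point-in-time stencil for $\partial_{\tau\tau}^{2}H_{i}$ equated to $\partial_{x}(H_{i}\partial_{x}\Phi_{i})$, plus residuals coming from the quadratic advection and from the two stabilization terms.

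Concretely, I would first write \eqref{semi:mass} at steps $n$ and $n-1$ and subtract, so as to make $H_{i}^{n+1}-2H_{i}^{n}+H_{i}^{n-1}$ appear on the left. The right-hand side then contains $-\Delta t\,\partial_{x}[(Hu)_{i}^{n}-(Hu)_{i}^{n-1}]$, which I would expand via \eqref{semi:mom} at step $n-1$. The leading $-\Delta t\,(H_{i}\partial_{x}\Phi_{i}/\varepsilon^{2})^{n-1}$ contribution of the momentum update, once struck by the outer $-\Delta t\,\partial_{x}$, produces $(\Delta t)^{2}\,\partial_{x}(H_{i}\partial_{x}\Phi_{i}/\varepsilon^{2})^{n-1}$. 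Under the scaling $\Delta t=\varepsilon\Delta\tau$ the key identity $(\Delta t)^{2}/\varepsilon^{2}=(\Delta\tau)^{2}$ turns this into exactly the wave operator $(\Delta\tau)^{2}\,\partial_{x}(H_{i}\partial_{x}\Phi_{i})^{n-1}$; dividing throughout by $(\Delta\tau)^{2}$ leaves
\begin{equation*}
\frac{H_{i}^{n+1}-2H_{i}^{n}+H_{i}^{n-1}}{(\Delta\tau)^{2}} \;-\; \partial_{x}\bigl(H_{i}\partial_{x}\Phi_{i}\bigr)^{n-1} \;=\; \frac{\mathcal{R}_{i}^{n}}{(\Delta\tau)^{2}},
\end{equation*}
the left-hand side being a standard second-order-accurate discretization of $\partial_{\tau\tau}^{2}H_{i}-\partial_{x}(H_{i}\partial_{x}\Phi_{i})$.

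It remains to check that $\mathcal{R}_{i}^{n}/(\Delta\tau)^{2}=\mathcal{O}(\Delta\tau)$. Three contributions enter the residual. The quadratic advection term $(\Delta t)^{2}\,\partial_{xx}(\bar{u}_{i}(Hu)_{i}^{\ast})^{n-1}$ becomes, after normalization, $\varepsilon^{2}\,\partial_{xx}(\cdots)$, hence is $\mathcal{O}(\varepsilon^{2})$. The $\alpha$-stabilization $-\alpha(\Delta t)^{3}\,\partial_{x}(H_{i}\partial_{xx}(Hu)_{i}/\varepsilon^{2})^{n-1}$ carries an extra $\Delta t/\varepsilon^{2}$ that, combined with the $(\Delta t)^{2}$ from the time differencing, yields $\mathcal{O}(\Delta t)=\mathcal{O}(\varepsilon\Delta\tau)$. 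The delicate piece is the $\gamma$-time-increment inherited from the mass equation; there I would reinject \eqref{semi:mass} to estimate $H_{i}^{n}-H_{i}^{n-1}=\mathcal{O}(\Delta\tau)$, invoke the linearity of $\Phi_{i}$ in $\boldsymbol{H}$ from Hypothesis~\ref{Regularity} to control the increment of $H_{i}\partial_{x}\Phi_{i}/\varepsilon^{2}$ by $\mathcal{O}(\Delta\tau/\varepsilon^{2})$, and finally multiply by $\gamma(\Delta t)^{2}/(\Delta\tau)^{2}=\gamma\varepsilon^{2}$, which again produces an $\mathcal{O}(\Delta\tau)$ contribution. Under the CFL \eqref{CFL_1D} this closes the estimate.

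The main obstacle is the bookkeeping of powers of $\varepsilon$: each stabilization piece carries a $1/\varepsilon^{2}$ from the pressure potential, and one must verify that, after the scaling and the mass-equation evaluation of $H_{i}^{n}-H_{i}^{n-1}$, all such singular factors are absorbed. That cancellation is precisely the mechanism making the scheme asymptotic-preserving at the fast acoustic scale.
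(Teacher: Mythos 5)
Your proposal follows the paper's proof essentially verbatim: difference the mass equation at steps $n$ and $n-1$, substitute the momentum update for $(Hu)^{n}-(Hu)^{n-1}$, and use $(\Delta t)^{2}/\varepsilon^{2}=(\Delta\tau)^{2}$ to recover the wave operator, with the stabilization residuals estimated exactly as you describe. The only imprecision is that the advection residual is not purely $\mathcal{O}(\varepsilon^{2})$, since $(Hu)^{\ast}$ itself carries the $\gamma\,\Delta t\,\partial_{x}(H\partial_{x}\Phi/\varepsilon^{2})$ shift, so that $\varepsilon^{2}\partial_{xx}\left(\bar{u}(Hu)^{\ast}\right)=\mathcal{O}(\varepsilon^{2})+\mathcal{O}(\varepsilon\Delta\tau)$ --- a point the paper handles explicitly and which still lies within the claimed error.
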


\begin{proof}We drop the subscript \textit{``i'' }for the sake
of simplicity. Using the mass equation (\ref{semi:mass}) at times
$n$ and $n+1$: 
\[
\begin{array}{lllllll}
H^{n+1} & - & H^{n} & = & -\varepsilon\Delta\tau\partial_{x}(Hu)^{n} & + & \left(\Delta\tau\right)^{2}\gamma\partial_{x}\left(H\partial_{x}{\Phi}\right)^{n}\,,\\
H^{n} & - & H^{n-1} & = & -\varepsilon\Delta\tau\partial_{x}(Hu)^{n-1} & + & \left(\Delta\tau\right)^{2}\gamma\partial_{x}\left(H\partial_{x}{\Phi}\right)^{n-1}\,,
\end{array}
\]
we write: 
\begin{equation}
\dfrac{H^{n+1}-2H^{n}+H^{n-1}}{\left(\Delta\tau\right)^{2}}=-\dfrac{\varepsilon}{\Delta\tau}\left[\partial_{x}\left((Hu)^{n}-(Hu)^{n-1}\right)\right]+\gamma\left[\partial_{x}\left(\left(H\partial_{x}{\Phi}\right)^{n}-\left(H\partial_{x}{\Phi}\right)^{n-1}\right)\right]\,.\label{Wave_continuous}
\end{equation}
Consider now the momentum equations (\ref{semi:mom}), and multiply
by $\dfrac{\varepsilon}{\Delta\tau}$:
\begin{equation}
\begin{split}\dfrac{\varepsilon}{\Delta\tau}\left((Hu)^{n}-(Hu)^{n-1}\right)= & -\varepsilon^{2}\left(\partial_{x}\left(\bar{u}(Hu)^{\ast}\right)
\right)^{n-1}\\
 & -\varepsilon^{2}\left(H\dfrac{\partial_{x}{\Phi}}{\varepsilon^{2}}\right)^{n-1}+\varepsilon\Delta\tau\alpha\left(H\partial_{xx}\left((Hu)\right)\right)^{n-1}\,.
\end{split}
\label{MomF}
\end{equation}
Going back to the definition of $(Hu)^{\ast}$ we write: 
\[
\varepsilon^{2}(Hu)^{\ast}=\varepsilon^{2}\left(Hu-\varepsilon\Delta\tau\gamma\partial_{x}\left(H\dfrac{\partial_{x}{\Phi}}{\varepsilon^{2}}\right)\right)=\underset{\varepsilon\rightarrow0}{\mathcal{O}}(\varepsilon^{2})+\mathcal{O}(\Delta\tau)\,.
\]
Since $\bar{u}_{i}$ is $\mathcal{O}(1)$, we have as a direct consequence:
\[
\varepsilon^{2}\left(\partial_{x}\left(\bar{u}(Hu)^{\ast}\right)
\right)=\underset{\varepsilon\rightarrow0}{\mathcal{O}}(\varepsilon^{2})+\mathcal{O}(\Delta\tau)\,.
\]
Finally, substituting (\ref{MomF}) in (\ref{Wave_continuous}) we
obtain: 
\begin{equation}
\dfrac{H_{i}^{n+1}-2H_{i}^{n}+H_{i}^{n-1}}{\left(\Delta\tau\right)^{2}}=\partial_{x}\left(H\partial_{x}{\Phi}\right)_{i}^{n}+\underset{\varepsilon\rightarrow0}{\mathcal{O}}(\varepsilon^{2})+\mathcal{O}(\Delta\tau)\,,
\end{equation}
that is the one-dimensional equivalent of (\ref{Wave}) with an error
in the order of $\Delta\tau$ and a second-order perturbation.\end{proof}

\subsection{Large time scale}

Assuming the Hessian ${\boldsymbol{\mathcal{H}}}$ (\ref{Hessian})
well-conditioned with respect to $\varepsilon$, that is the condition
number of ${\boldsymbol{\mathcal{H}}}$ is $\underset{\varepsilon\rightarrow0}{\mathcal{O}}(1)$,
the asymptotic regime associated with large time scales $t=\underset{\varepsilon\rightarrow0}{\mathcal{O}}(1)$
can be derived as a divergence-free model:

\begin{equation}
\left\lbrace \begin{aligned}\mathrm{div}\left(H\textbf{u}_{i}\right) & =0\,\\
\partial_{t}\textbf{u}_{i}+\left(\textbf{u}_{i}.\nabla\right)\textbf{u}_{i} & =-\nabla\Phi_{i}
\end{aligned}
\right.\,.\label{DF}
\end{equation}

\begin{theorem} Consistency with the divergence free model (\ref{DF}):\\
 Consider the time step scaling $\Delta t=\underset{\varepsilon\rightarrow0}{\mathcal{O}}(1)$,
and assume that the spatial perturbation of the potential is in the
order of $\varepsilon^{2}$: 
\begin{equation}
\Phi_{i}(t,x)=\bar{\Phi}_{i}(t)+\varepsilon^{2}\hat{\Phi}_{i}(t,x)\,.\label{SP}
\end{equation}
Then the semi-discrete model (\ref{semi:mass},\ref{semi:mom}) furnishes
an approximation of the wave equations (\ref{DF}) with an error in
the order of $\mathcal{O}(\Delta t)$ and $\mathcal{O}(\Delta t,{\Delta x})$
respectively.\end{theorem}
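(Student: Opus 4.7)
The strategy is to insert the well-prepared expansion (\ref{SP}) into the semi-discrete equations (\ref{semi:mass}, \ref{semi:mom}), then extract the leading-order equations while showing the remainders lie within the announced errors. The key simplification is that under (\ref{SP}) the stiff ratio $\partial_x\Phi_i/\varepsilon^2 = \partial_x\hat{\Phi}_i$ is uniformly $\mathcal{O}(1)$ as $\varepsilon\to 0$, which removes the singular character of the potential terms and makes the $\mathcal{O}(\Delta t)$ classification of the regularization contributions rigorous.

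For the mass equation, I would substitute (\ref{SP}) into (\ref{semi:mass}) so that the $\gamma$-regularization becomes $(\Delta t)^2\gamma\,\partial_x(H_i\partial_x\hat{\Phi}_i)^n$. Dividing by $\Delta t$ yields
\[
\frac{H_i^{n+1}-H_i^n}{\Delta t}+\partial_x(Hu)_i^n \;=\; \Delta t\,\gamma\,\partial_x(H_i\partial_x\hat{\Phi}_i)^n \;=\; \mathcal{O}(\Delta t).
\]
Because (\ref{SP}) forces the leading-order mass $H_i$ to be slaved to $\bar{\Phi}_i(t)$ through the potential law (whose Hessian $\boldsymbol{\mathcal{H}}$ is invertible by Hypothesis \ref{Regularity}), its $x$-gradient only contains $\mathcal{O}(\varepsilon^2)$ contributions, and one concludes that $\partial_x(Hu)_i = \mathcal{O}(\Delta t)$ at leading order, which matches the divergence-free constraint in (\ref{DF}).

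For the momentum equation, I would first write $(Hu)^* = Hu - \Delta t\gamma H\partial_x\hat{\Phi}$, which differs from $Hu$ by $\mathcal{O}(\Delta t)$, so that $\partial_x(\bar{u}_i(Hu)_i^*)^n = \partial_x(\bar{u}_i(Hu)_i)^n + \mathcal{O}(\Delta t)$. The upwind correction buried in $\bar{u}_i - u_i$ is $\mathcal{O}(\Delta x)$, contributing to the announced $\mathcal{O}(\Delta t,\Delta x)$ remainder. Dividing by $\Delta t$, combining with the mass step to convert $\partial_t(Hu) + \partial_x(u(Hu))$ into $H(\partial_t u + u\,\partial_x u)$, and using (\ref{SP}) to replace $H\partial_x\Phi/\varepsilon^2$ by $H\partial_x\hat{\Phi}$, recovers the second line of (\ref{DF}) at leading order.

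The main obstacle is the $\alpha$-stabilization term $(\Delta t)^2\alpha\,H_i\partial_{xx}(Hu)_i/\varepsilon^2$, whose $1/\varepsilon^2$ factor is not compensated by the prefactor alone. To control it I would exploit the asymptotic expansion $Hu = (Hu)^{(0)} + \varepsilon^2(Hu)^{(2)}+\cdots$ induced by (\ref{SP}) together with the divergence-free identity established in the mass step: since $\partial_x(Hu)^{(0)} = 0$, the quantity $\partial_{xx}(Hu)/\varepsilon^2$ reduces to $\partial_{xx}(Hu)^{(2)} + \mathcal{O}(\varepsilon^2)$ and is therefore pointwise $\mathcal{O}(1)$. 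The $\alpha$-term then contributes only $\mathcal{O}(\Delta t)$, closing the estimate and establishing consistency with (\ref{DF}) up to $\mathcal{O}(\Delta t,\Delta x)$ in the momentum equation.
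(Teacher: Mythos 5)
Your overall route is the paper's: insert the well-prepared ansatz (\ref{SP}), read the discrete divergence constraint off the mass update, and convert the momentum update into an equation for $u$. The first two thirds of your argument match the paper's proof (modulo a minor imprecision in the mass step: what forces $H^{n+1}-H^{n}$ to be negligible is $\partial_{t}H_{i}=\mathcal{O}(\varepsilon^{2})$, a consequence of $\Phi=\boldsymbol{\mathcal{H}}\boldsymbol{H}$ and (\ref{SP}), not the smallness of the \emph{spatial} gradient of $H$ that you invoke).

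The treatment of the $\alpha$-term, however, has a genuine gap. You claim that the divergence-free identity from the mass step gives $\partial_{x}(Hu)^{(0)}=0$, so that $\partial_{xx}(Hu)/\varepsilon^{2}$ is pointwise $\mathcal{O}(1)$. But the mass step only yields (\ref{DivFree}), i.e.
\[
\partial_{x}\left(Hu\right)=\Delta t\,\gamma\,\partial_{x}\bigl(H\partial_{x}\hat{\Phi}\bigr)+\underset{\varepsilon\rightarrow0}{\mathcal{O}}(\varepsilon^{2})\,:
\]
it is $Hu^{\ast}$, not $Hu$, whose divergence is $\mathcal{O}(\varepsilon^{2})$. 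The $\varepsilon$-leading part of $\partial_{x}(Hu)$ is therefore $\Delta t\,\gamma\,\partial_{x}(H\partial_{x}\hat{\Phi})$, which is $\mathcal{O}(\Delta t)$ but \emph{not} zero (recall $\gamma>0$ is required for stability), and this contradicts your own conclusion from the mass step that the divergence is only $\mathcal{O}(\Delta t)$. Carrying this through, the stabilization term is
\[
\Delta t\,\alpha\,\dfrac{\partial_{xx}(Hu)}{\varepsilon^{2}}=\left(\dfrac{\Delta t}{\varepsilon}\right)^{2}\alpha\gamma\,\partial_{xx}\bigl(H\partial_{x}\hat{\Phi}\bigr)+\mathcal{O}(\Delta t)\,,
\]
and the first contribution blows up as $\varepsilon\rightarrow0$ at fixed $\Delta t$; it is not absorbed by any of the error terms you announce. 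The missing ingredient is the explicit CFL condition (\ref{CFL_1D}), which gives $\Delta t/\varepsilon=\mathcal{O}(\Delta x)$ and hence makes this contribution $\mathcal{O}({\Delta x}^{2})$ — this is exactly how the paper closes the estimate. Without invoking the CFL constraint (or some other relation tying $\Delta t$ to $\varepsilon$ and $\Delta x$), your bound on the $\alpha$-term does not hold and the consistency claim is not established.
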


\begin{proof}Note that with (\ref{SP}), and based on the regularity
assumptions made on the potential forces (\ref{Regularity}), we also
have: 
\[
\partial_{t}\Phi_{i}=\underset{\varepsilon\rightarrow0}{\mathcal{O}}(\varepsilon^{2})\qquad\text{ and }\qquad\partial_{t}H_{i}=\underset{\varepsilon\rightarrow0}{\mathcal{O}}(\varepsilon^{2})\,.
\]
Again, we drop the subscript \textit{``i''} to alleviate the notations.
We directly obtain from (\ref{semi:mass}): 
\begin{equation}
\partial_{x}\left(Hu^{\ast}\right)^{n}=\partial_{x}\left(Hu\right)^{n}-\Delta t\gamma\partial_{x}\left(H\partial_{x}\hat{\Phi}\right)^{n}=\underset{\varepsilon\rightarrow0}{\mathcal{O}}(\varepsilon^{2})\,,\label{DivFree}
\end{equation}
that is the divergence-free condition with an error in $\mathcal{O}(\Delta t)$
and a second-order perturbation. Using the relation: 
\[
(Hu)^{n+1}-(Hu)^{n}=\left(H^{n+1}-H^{n}\right)u^{n+1}+H^{n}\left(u^{n+1}-u^{n}\right)=H^{n}\left(u^{n+1}-u^{n}\right)+\underset{\varepsilon\rightarrow0}{\mathcal{O}}(\varepsilon^{2})\,,
\]
together with the momentum equation (\ref{semi:mom}) we write: 
\begin{equation}
\dfrac{u^{n+1}-u^{n}}{\Delta t}=-\dfrac{1}{H^{n}}\left(\partial_{x}\left(\bar{u}(Hu)^{\ast}\right)+\left(H\partial_{x}\hat{\Phi}\right)\right)^{n}+\Delta t\alpha\left(\dfrac{\partial_{xx}\left(Hu\right)^{n}}{\varepsilon^{2}}\right)+\underset{\varepsilon\rightarrow0}{\mathcal{O}}(\varepsilon^{2})\,.
\end{equation}
For any $n$, we first note that: 
\begin{equation}
\begin{split}\dfrac{1}{H}\partial_{x}\left(\bar{u}(Hu)^{\ast}\right) & =u\partial_{x}u+\underset{\varepsilon\rightarrow0}{\mathcal{O}}(\varepsilon^{2})+\mathcal{O}(\Delta t,{\Delta x})\end{split}
\end{equation}
going back to the semi-discrete divergence free relation (\ref{DivFree}),
one has: 
\begin{equation}
\begin{split}\partial_{xx}\left(Hu\right)=\Delta t\gamma\partial_{xx}\left(H\partial_{x}\hat{\Phi}\right)+\underset{\varepsilon\rightarrow0}{\mathcal{O}}(\varepsilon^{2})\,,\end{split}
\end{equation}
and hence: 
\begin{equation}
\Delta t\alpha\left(\dfrac{\partial_{xx}\left(Hu\right)}{\varepsilon^{2}}\right)=\left(\dfrac{\Delta t}{\varepsilon}\right)^{2}\alpha\gamma\partial_{xx}\left(H\partial_{x}\hat{\Phi}\right)+\mathcal{O}(\Delta t)\,.
\end{equation}
Under the explicit CFL (\ref{CFL_1D}), the first term of the right
hand side is $\mathcal{O}({\Delta x}^{2})$. At last this gives: 
\begin{equation}
\dfrac{u_{i}^{n+1}-u_{i}^{n}}{\Delta t}=-u_{i}\partial_{x}u_{i}-\partial_{x}\hat{\Phi}_{i}+\underset{\varepsilon\rightarrow0}{\mathcal{O}}(\varepsilon^{2})+\mathcal{O}(\Delta t,{\Delta x})\,.
\end{equation}
\end{proof}

\section{Numerical test cases\label{sec:Numerical-test-cases}}

This part is dedicated to the survey of the numerical scheme's global
efficiency at first and second-order, with a particular focus on low
Froude regimes. Theoretical and numerical investigations involving
wet/dry fronts and a complex management of the layers are left for
future works. For the sake of completeness, the second-order extension
in space and time, the adaptive time step used and the time stepping
scheme to incorporate the Coriolis force are given in the Appendix
\ref{sec:Appendix}. We recall here that all the numerical tests were
performed with $\widehat{H}_{K}^{n}=H_{K}^{n}$ in the numerical fluxes
(\ref{fe}, \ref{pien}, \ref{hmu}) (see Remarks \ref{Implicit}
and \ref{Explicit}).

It should be emphasized that it is difficult to carry on qualitative
comparisons on the different numerical approaches in case of multiple
layers. This is mainly due to the very small number of such academic
test cases available in the literature, as it is difficult to derive
analytical solutions. Some reference solutions for the multilayer
shallow water model with the Coriolis force are of course provided
by more sophisticated operational softwares like HYCOM \citep{Bleck2002},
ROMS \citep{Shchepetkin2005} or NEMO \citep{Madec2008}, but with
the inconvenience of not being necessary exactly based on the same
physical model as the one concerned here.

A first academic test case is considered, involving two-dimensional
oscillating layers around a steady state in the linear small amplitude
limit. It is investigated for this test case the inequality conditions
for the stabilization constants $\gamma$ and $\alpha$ ensuring the
linear stability, as well as those which guarantee the strict decrease
of the mechanical energy between each time step. At the end, the \textit{a
priori} best pair verifying the two stability conditions with a minimum
of dissipation will be extracted and the resulting scheme compared
to the classical HLLC approximate Riemann solver (see \citep{Toro2001}
with the wave speed estimates of \citep{Vila1986}). In a second test
case, the scheme's accuracy is investigated for a smooth two-dimensional,
non-stationnary and non-linear solution. In a third test case, we
focus on the well-balanced property, considering an initial jump of
water surface elevation propagating over a non trivial topography.
A more advanced test case is finally studied, the so-called baroclinic
vortex, that can be found in the COMODO benchmark \citep{COMODO},
a test suite set up by the international oceanographic community to
evaluate and compare the numerical solvers efficiency.

\subsection{Linear waves\label{subsec:linear-waves}}

In the present test case we investigate the two-dimensional simulation
of oscillating layers around a steady state of flat layers with a
flat bottom. In case of waves of small amplitude, an approximate analytical
solution can be derived from the linear wave theory. Considering only
one layer, in the limit of small amplitude variation around the layer
depth at rest $\eta_{0}$, the deviation $\zeta_{1}$ ($\eta_{1}=\eta_{0}+\zeta_{1}$)
is solution of the two-dimensional wave equation with the associated
dispersive relation $\omega^{2}=c^{2}\left(k_{x}^{2}+k_{y}^{2}\right)$,
where $k_{x}$ and $k_{y}$ are the wave numbers in the $x$ and $y$
direction respectively. Considering now the same problem for the $L$
layers shallow water model, we obtain $L$ coupled linear wave equations,

\[
\forall i\in\left\llbracket 1,L\right\rrbracket ,\quad{\displaystyle \frac{\partial^{2}\zeta_{i}}{\partial t^{2}}-c_{i}^{2}\sum_{j=1}^{L}\frac{min(\rho_{i},\rho_{j})}{\rho_{i}}\triangle\zeta_{j}=0}\,.
\]
By denoting $\tilde{c}_{i}$ the eigenvalues of the matrix ${\displaystyle A_{ij}=\left(c_{i}^{2}\frac{\min(\rho_{i},\rho_{j})}{\rho_{i}}\right)}$,
the above coupled system of wave equations can be rewritten in $L$
uncoupled linear wave equations,

\[
\forall i\in\left\llbracket 1,L\right\rrbracket ,\quad{\displaystyle \frac{\partial^{2}\tilde{\zeta}_{i}}{\partial t^{2}}-\tilde{c}_{i}^{2}\triangle\tilde{\zeta}_{i}=0}\,,
\]

\noindent where $\tilde{\zeta}$ is the projection of $\zeta$ onto
the diagonal basis using the left eigenvectors matrix. Simulations
are initialized in a $100\ \mathrm{km}$ square box with periodic
boundary conditions, a sea surface at rest $\eta{}_{0}=5000\ \mathrm{m}$,
five evenly spaced layers $h_{i}=1000\ \mathrm{m}$ with densities
following a linear law $\rho_{i}=1000+50\:(i-1)$ and a gravitational
acceleration $g=10\ \mathrm{m}.\mathrm{s}^{-2}$. Note that the density
ratios considered here, large in comparison with those encountered
in more realistic contexts like oceans, have the effect of reducing
the wave phase speed differences and allowing to consider a smaller
time integration to capture the layers interaction. Considering a
deviation $\zeta_{1}=\cos\left(k_{x}x\right)\cos\left(k_{y}y\right)$
only for the first layer with one wavelength in each direction, approximatively
$11$ wave periods can be observed with a simulation time $t=3600\:\mathrm{\mathrm{s}}$
for a maximum wave velocity $\max\left(\tilde{c_{i}}\right)\approx\sqrt{2gh_{0}}\approx316\ \mathrm{m}.\mathrm{s}^{-1}$.
We give here the expression of the discrete mechanical energy:

\begin{equation}
{\normalcolor E^{n}={\displaystyle \frac{1}{2}}{\displaystyle \sum_{K\in\mathbb{T}}\sum_{i=1}^{L}m_{K}\frac{\rho_{i}}{\rho_{L}}\left(h_{K,i}^{n}\left\Vert \mathbf{u}_{K,i}^{n}\right\Vert ^{2}/\epsilon^{2}+g\left(h_{K,i}^{n}\right)^{2}+2\sum_{j=i+1}^{L}gh_{K,i}^{n}h_{K,j}^{n}\right)}}\,,\label{eq:Discrete_Energy}
\end{equation}

\noindent as it will be a useful measurement for the simulations presented
above. Note finally that $\epsilon=2.10^{-4}$ for this test, giving
a very low Froude solution.

\subsubsection{Stability issues - searching for optimal stabilization parameters}

\begin{figure}[!tbh]
\begin{centering}
\includegraphics[width=0.4\textwidth]{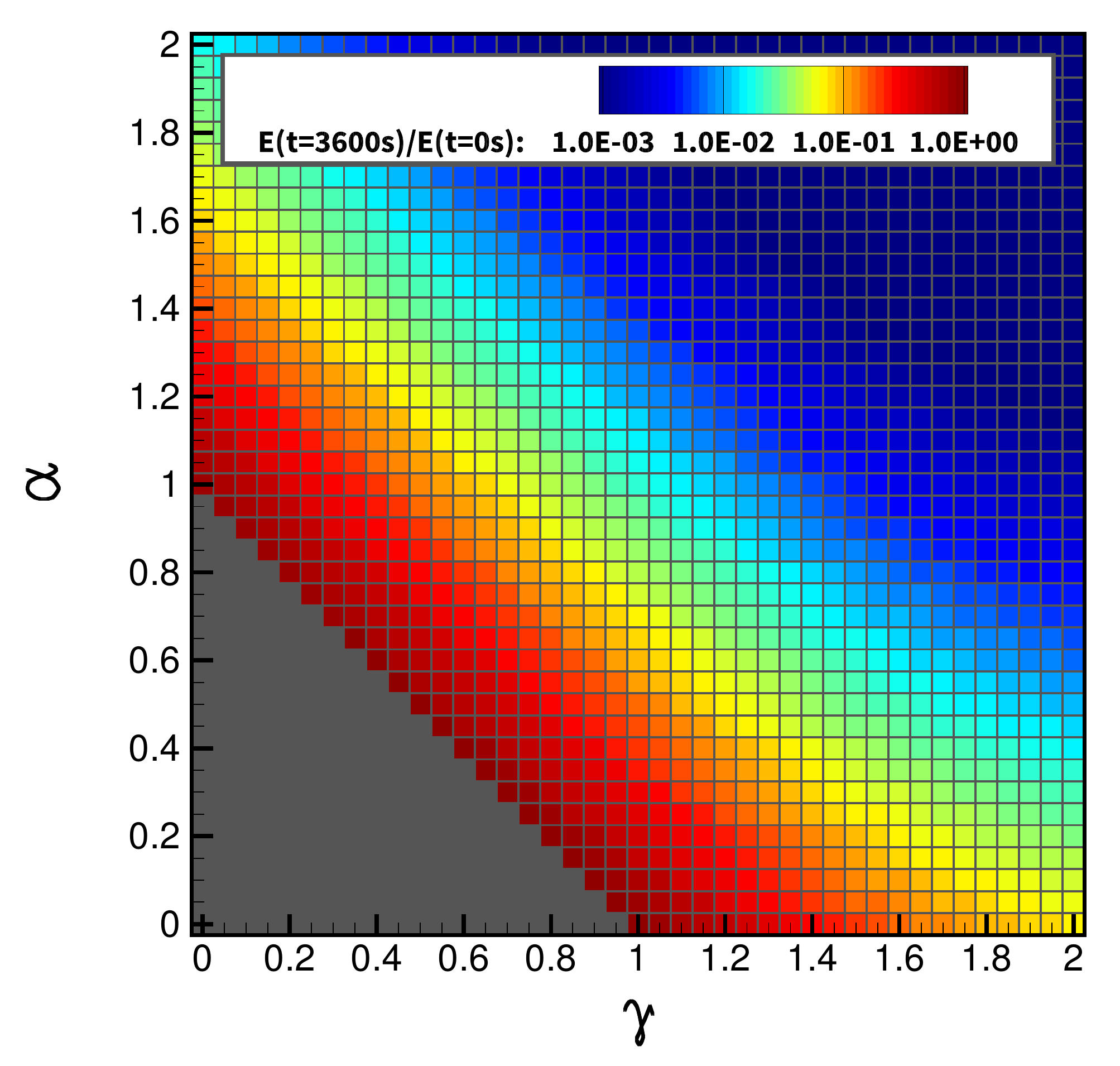}
\includegraphics[width=0.4\textwidth]{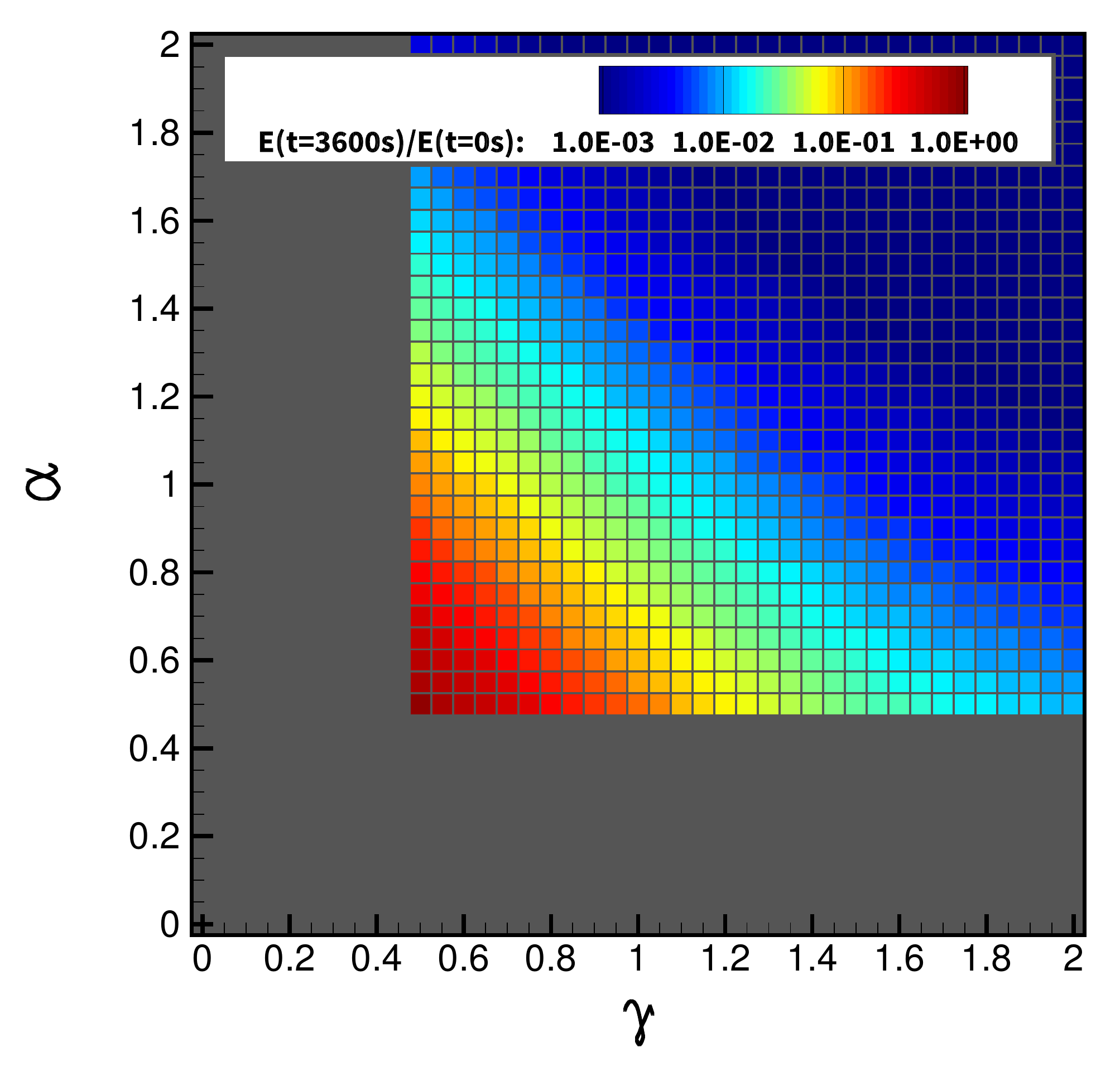} 
\par\end{centering}
\caption{Mechanical energy dissipation according to $\gamma$ and $\alpha$
with a fixed CFL number of $0.5$ in (\ref{eq:time-step}) using the
first-order scheme and a $41\times41$ mesh size ; (\textit{left})
gray zone corresponds to an unstable algorithm ; (\textit{right})
gray zone corresponds to a non monotonically decreasing energy. The
energy ratio $E(t=3600\,\mathrm{s})/E(t=0\,\mathrm{s})$, computed
from (\ref{eq:Discrete_Energy}) and displayed in $log$ scale, highlights
the scheme's dissipation.\label{fig:energy-ratio-gamma-alpha-1}}
\end{figure}

\begin{figure}[!tbh]
\begin{centering}
\includegraphics[width=0.4\textwidth]{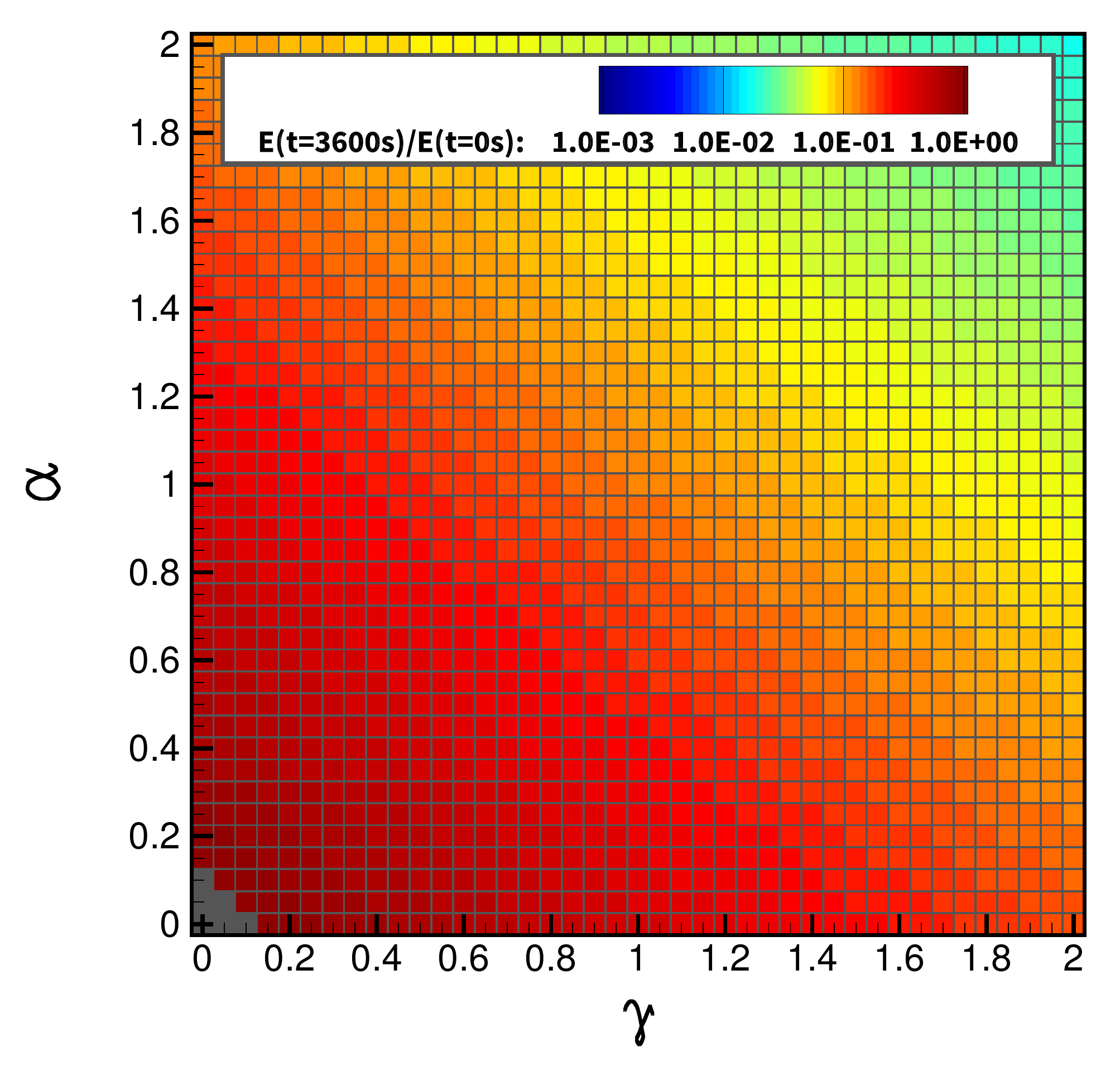}
\includegraphics[width=0.4\textwidth]{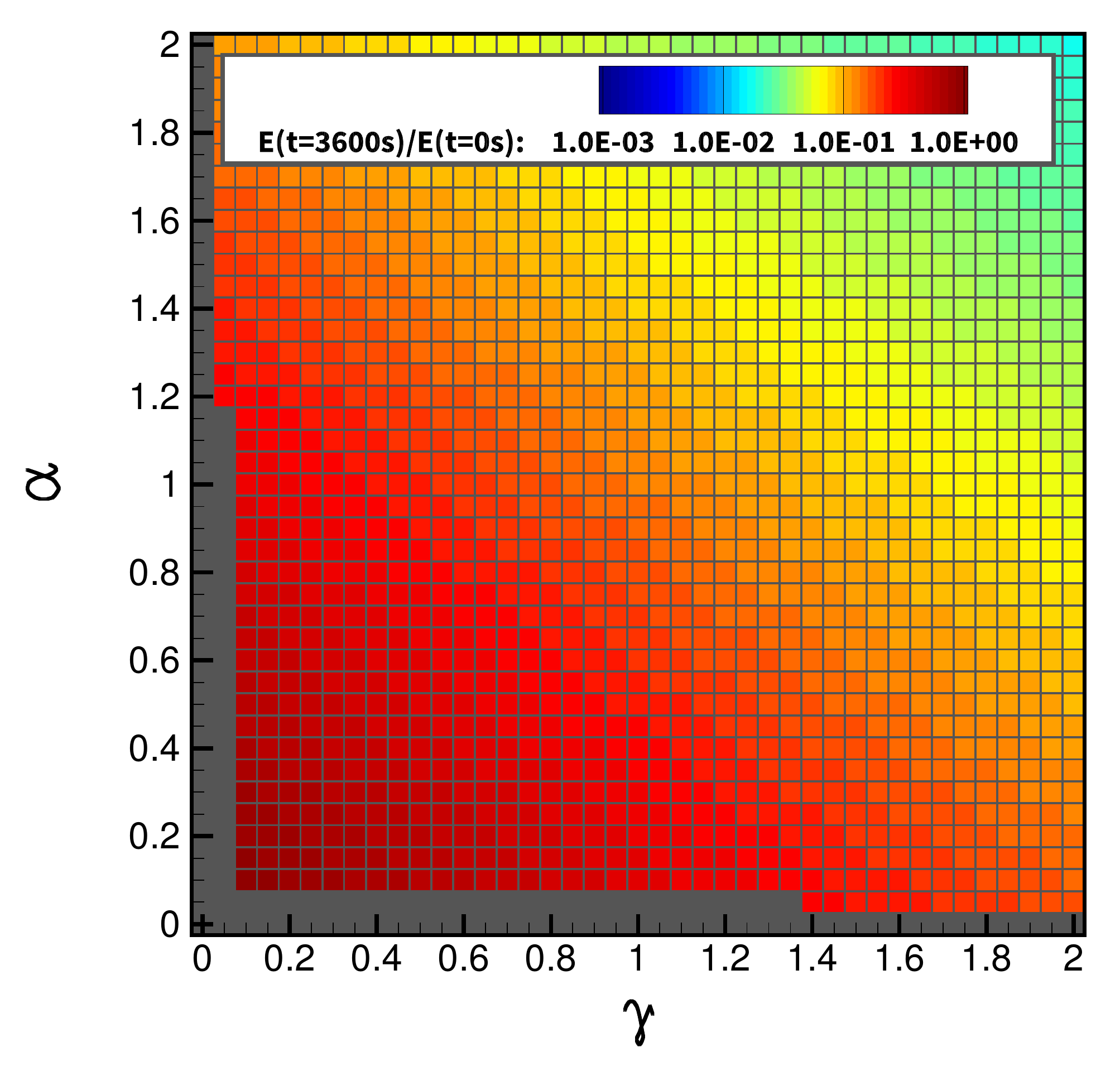} 
\par\end{centering}
\caption{Mechanical energy dissipation according to $\gamma$ and $\alpha$
with a fixed CFL number of $0.5$ in (\ref{eq:time-step}) using the
second-order scheme and a $11\times11$ mesh size; (\textit{left})
gray zone corresponds to an unstable algorithm; (\textit{right}) gray
zone corresponds to a non monotonically decreasing energy. The energy
ratio $E(t=3600\,\mathrm{s})/E(t=0\,\mathrm{s})$, computed from (\ref{eq:Discrete_Energy})
and displayed in $log$ scale, highlights the scheme's dissipation.\label{fig:energy-ratio-gamma-alpha-2}}
\end{figure}

A preliminary goal for this test case is to search numerically a range
for the two stabilization constants $\gamma$ and $\alpha$ that ensures
linear stability, and another that ensures a strict decrease of mechanical
energy, while aiming at minimizing the dissipation, for a CFL number
arbitrarily fixed at $0.5$ in (\ref{eq:time-step}). In order to
address that question, thousands of numerical simulations have been
performed with regular variations of the two constants (with a $0.05$
step), testing for each pair two stopping criteria separately during
the simulation. The first one, intented to detect a possible breaking
point in the linear stability of the scheme, is based on an \textit{a
priori} exponential growth of the mechanical energy and the second
one, more restrictive, checks if the mechanical energy decrease is
violated at each time step. These experiments were carried out using
a $41\times41$ mesh size for the first-order scheme (\ref{mass}-\ref{mom}-\ref{fe}-\ref{phiea})
and a $11\times11$ mesh size for the second-order scheme (Eqs.\ref{eq:exp-scheme-1-1}).
These mesh sizes allow to keep the same order of magnitude for the
mechanical energy diffusion. All the numerical results are summarized
for the first-order scheme in Fig.\ref{fig:energy-ratio-gamma-alpha-1}
and for the second-order scheme in Fig.\ref{fig:energy-ratio-gamma-alpha-2}.

For the first criterion, it can be clearly observed for the first-order
scheme that the sum $\gamma+\alpha$ must be greater than a minimum
value of $1$. This result is perfectly consistent with the linear
stability analysis presented in \S\ref{linear}, in the sense that
the sum ${\normalcolor \gamma+\alpha}$ governs the terms of the amplification
matrix, while the product $\gamma\alpha$ is marginal. As one could
expect, it is also found a minimum of dissipation for this minimal
sum value. Notice that one of the two coefficients can be taken to
zero and that the minimum of dissipation is reached for $\gamma=1$
and $\alpha=0$. Greater sum values introduce quickly and nearly proportionally
large amounts of dissipation. For the second-order scheme, the same
general behaviour is observed again, except that the minimum sum value
found is now $0.15$, really much lower than for the first-order case.
But in contrast with the first-order scheme, this value is correlated
to the given CFL number of $0.5$. This result may be perceived unintuitive
because MUSCL reconstructions tends to reduce the value of the stabilization
terms appearing in the mass flux and the pressure term (\ref{eq:exp-scheme-2-1})
for very regular solutions. We have verified in the linear stability
analysis that this is the Heun's method for time discretization which
mainly explains this reduction, changing profoundly the diffusion
terms nature. The increase of dissipation induced by greater sum values
is also much more limited compared to the first-order scheme.

If we now look to the second criterion, based on the mechanical energy
strict decrease, the two coefficients must be both greater than a
minimum value of $0.5$ for the first-order scheme, and a minimum
value of $0.15$ for the second-order scheme, except for too high
inefficient stabilization constants exhibiting more dissipation. This
experiment confirms an important result: the two stabilization constants
$\gamma$ and $\alpha$ are both necessary to find a strict mechanical
energy decrease.

The stability condition inequalities found for this test case of fast
gravitational waves are summarized in Tab.\ref{tab:stability-criteria-found}.
It is found optimal stabilization constants $\gamma=0.5$ and $\alpha=0.5$
for the first-order scheme and $\gamma=0.1$ and $\alpha=0.1$ for
the second-order scheme if the CFL number is fixed to $0.5$. Many
other simulations were run in other contexts, without bringing any
significant variability on these conditions.

\begin{table}[!tbh]
\begin{centering}
\begin{tabular}{>{\centering}m{0.2\textwidth}>{\centering}m{0.4\textwidth}}
\multicolumn{2}{l}{\textbf{first-order scheme }(independant of the CFL)}\tabularnewline
\midrule 
linear stability  & mechanical energy dissipation\tabularnewline
\midrule 
$\gamma+\alpha\geq1$  & $\gamma\geq0.5$ and $\alpha\geq0.5$\tabularnewline
 & \tabularnewline
\multicolumn{2}{l}{\textbf{second-order scheme }(only for a CFL number of 0.5 in (\ref{eq:time-step}))}\tabularnewline
\midrule 
linear stability  & mechanical energy dissipation\tabularnewline
\midrule 
$\gamma+\alpha\geq0.15$  & $\gamma\geq0.1$ and $\alpha\geq0.1$\tabularnewline
\end{tabular}
\par\end{centering}
\caption{Stability inequalities conditions found by a numerical experiment
of two-dimensional gravity waves for the first and second-order schemes.
The relaxed conditions obtained at second-order highlight the stabilizing
effects of the Heun's time discretization method.\label{tab:stability-criteria-found}}
\end{table}

\begin{figure}[!tbh]
\begin{centering}
\includegraphics[width=0.4\textwidth]{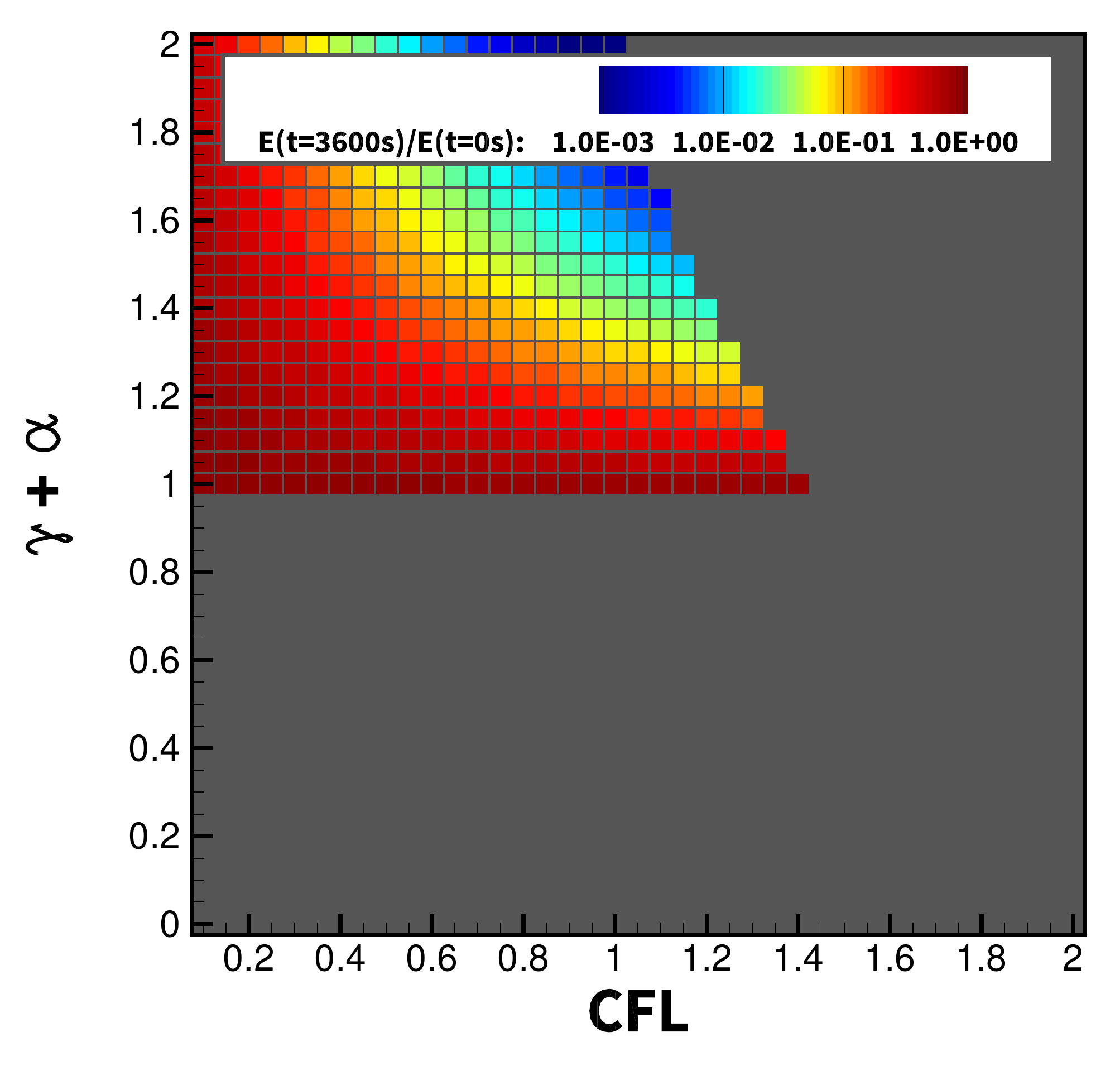}
\includegraphics[width=0.4\textwidth]{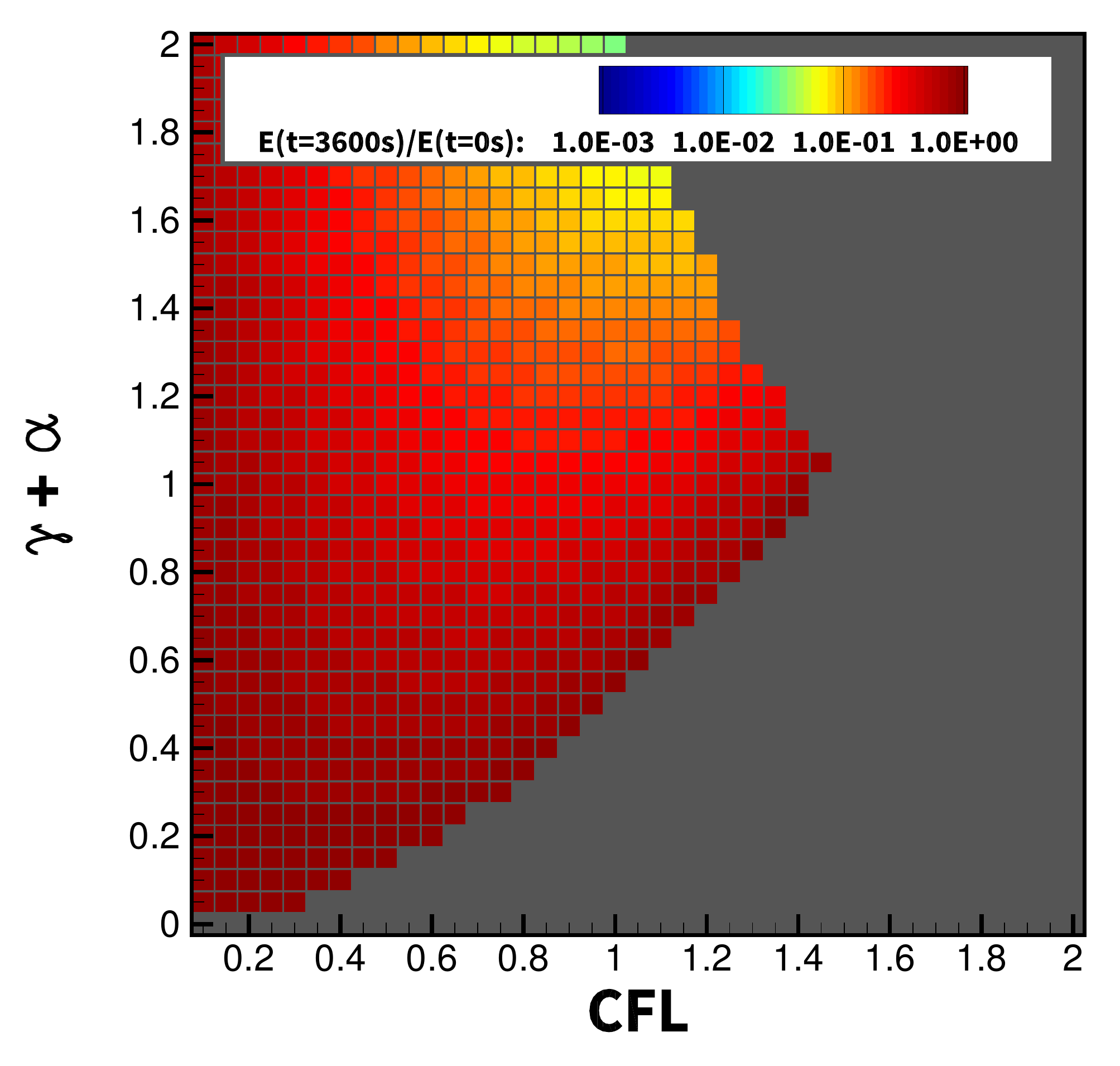} 
\par\end{centering}
\caption{Mechanical energy dissipation according to the sum $\gamma+\alpha$,
fixing the relation $\gamma=\alpha$, and the CFL number. Zones corresponding
to an unstable algorithm and non monotonically decreasing energy are
perfectly overlapping and appear in gray; (\textsl{left}) using the
first-order scheme and a $41\times41$ mesh size; (\textsl{right})
using the second-order scheme and a $11\times11$ mesh size. The energy
ratio $E(t=3600\,\mathrm{s})/E(t=0\,\mathrm{s})$, computed from (\ref{eq:Discrete_Energy})
and displayed in $log$ scale, highlights the scheme's dissipation.\label{fig:energy-ratio-gamma-cfl-1-2}}
\end{figure}

A similar experiment was performed considering varying values for
the sum $\gamma+\alpha$, fixing the relation $\gamma=\alpha$, and
CFL numbers. The numerical results are summarized in Fig.\ref{fig:energy-ratio-gamma-cfl-1-2}.
First, without any surprise, we recover the same patterns as those
from the linear stability analysis (see Figs.\ref{GK1} and \ref{AG1}
rescaling the CFL numbers). Now, an additionnal result is that the
mechanical energy is also dissipated in the domains of linear stability.
Secondly, for the first-order scheme, the dissipation is reduced considering
smaller CFL numbers for a given sum. For the second-order scheme,
the dissipation dependence with respect the CFL number is more complicated
and it is not so clear how to extract an optimal pair of stabilization
constants.

\subsubsection{Comparison with analytical solution}

\begin{figure}[!tbh]
\begin{centering}
\includegraphics[width=0.45\textwidth]{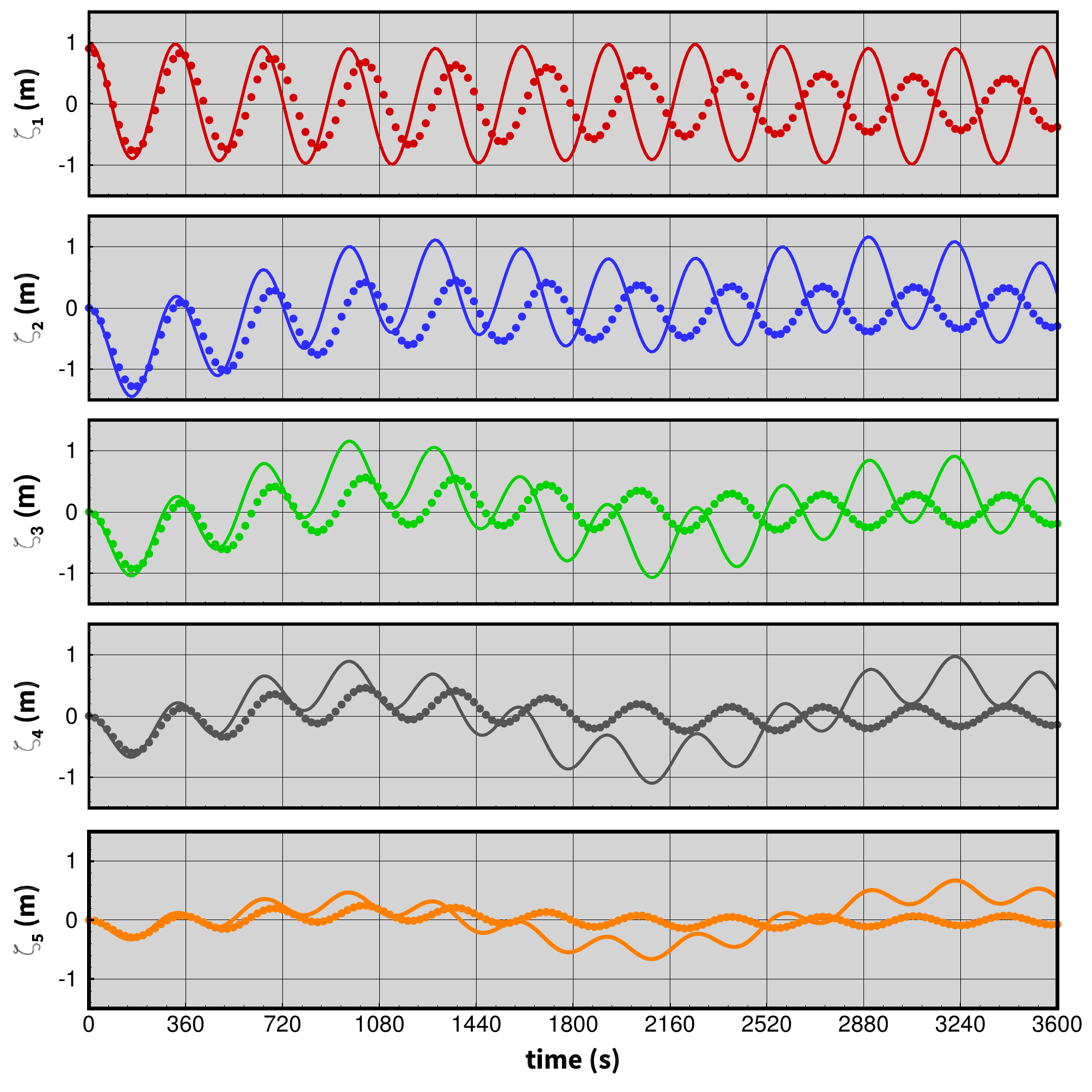}\includegraphics[width=0.45\textwidth]{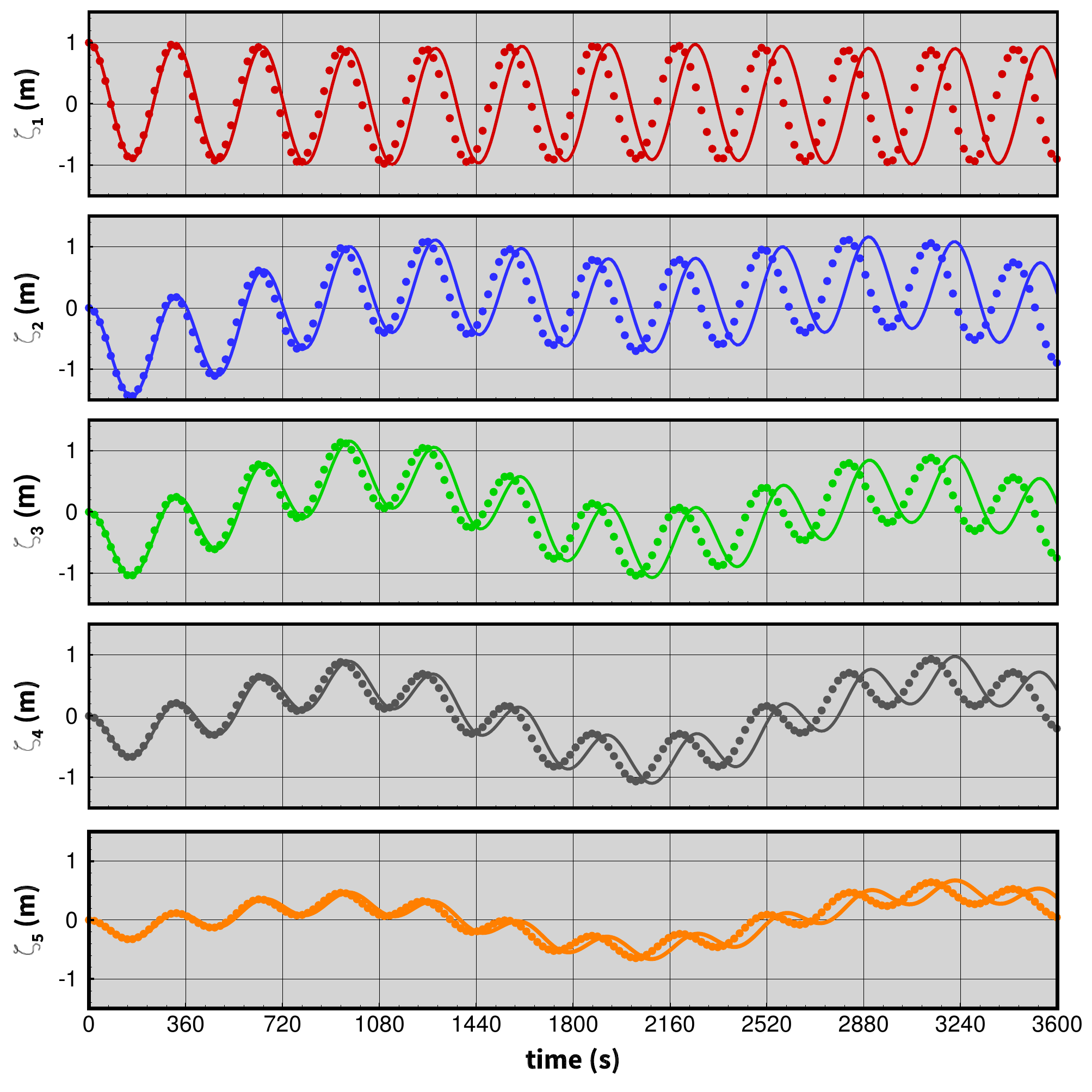} 
\par\end{centering}
\caption{Time evolution of the five surface layers deviation ($\eta_{i}=\eta_{i,0}+\zeta_{i}$)
at the box center computed with a $11\times11$ mesh size. Analytical
solution is given in continuous line and numerical solution in dotted
line; (\textit{left}) using the first-order scheme with $\gamma=\alpha=0.5$
and a CFL number of $0.5$; (\textit{right}) using the second-order
scheme with $\gamma=\alpha=0.1$ and a CFL number of $0.5$.\label{fig:grav-waves-5-layers-11x11}}
\end{figure}

\noindent In Fig.\ref{fig:grav-waves-5-layers-11x11} we propose the
time evolution of the five surface layers deviation using the first-order
scheme with $\gamma=\alpha=0.5$ (\textit{left}) and the second-order
scheme with $\gamma=\alpha=0.1$ (\textit{right}), corresponding to
the two optimal pairs found in the previous section for a $0.5$ CFL
number. The dispersive behaviour of the scheme can clearly be observed
because of the obvious phase shift, although this effect is reduced
by the second-order scheme. Nevertheless, the scheme at first and
second-order reproduces qualitatively very well the multiple interactions
between the layers in light of the $11\times11$ coarse mesh size
used. For this resolution and these stabilization constants, the second-order
scheme does exhibit a minimum of dissipation, only the dispersive
effects can be clearly distinguished.

\subsubsection{Comparison with the HLLC scheme}

We have found by a numerical experiment the optimal pairs of stabilization
constants $\gamma$ and $\alpha$ for the first and second-order schemes.
As the present method also applies to the classical shallow water
equations ($L=1$), it is interesting to illustrate the current approach
efficiency comparing it with other classical Godunov-type solvers.
From this perspective, we reduce the present test to the one layer
case, and employ the HLLC scheme, supplemented with a second-order
MUSCL reconstruction coupled with the Heun's method for time discretization.

Some numerical results are given in Fig.\ref{fig:waves-one-layer}.
As a first remark, the original HLLC scheme totally fails to capture
numerically the oscillations after a few wavelengths. There is no
more mechanical energy at the end of the simulation for the majority
of the mesh sizes considered here. An extreme level of refinement
is needed to asymptotically capture the first-order convergence. The
problem is however significantly reduced employing the second-order
extension in space and time.

With regard to the presented scheme, the results are widely better
than for the HLLC scheme at first and second-order. As a matter of
fact, the first-order scheme is already better than the second-order
HLLC scheme, while bearing in mind that the computational cost is
in addition really smaller. Note that with this level of refinement,
a third order convergence rate is reached for the first-order scheme,
except for most refined meshes, which may indicate a progressive alignment
on the right order of convergence. The present second-order scheme
does not exhibit significant losses of mechanical energy. Only a phase
shift is observed, introduced by the dispersive nature of the flow,
in the same order of magnitude than the HLLC scheme.

\begin{figure}[!tbh]
\centering{}\includegraphics[width=0.49\textwidth]{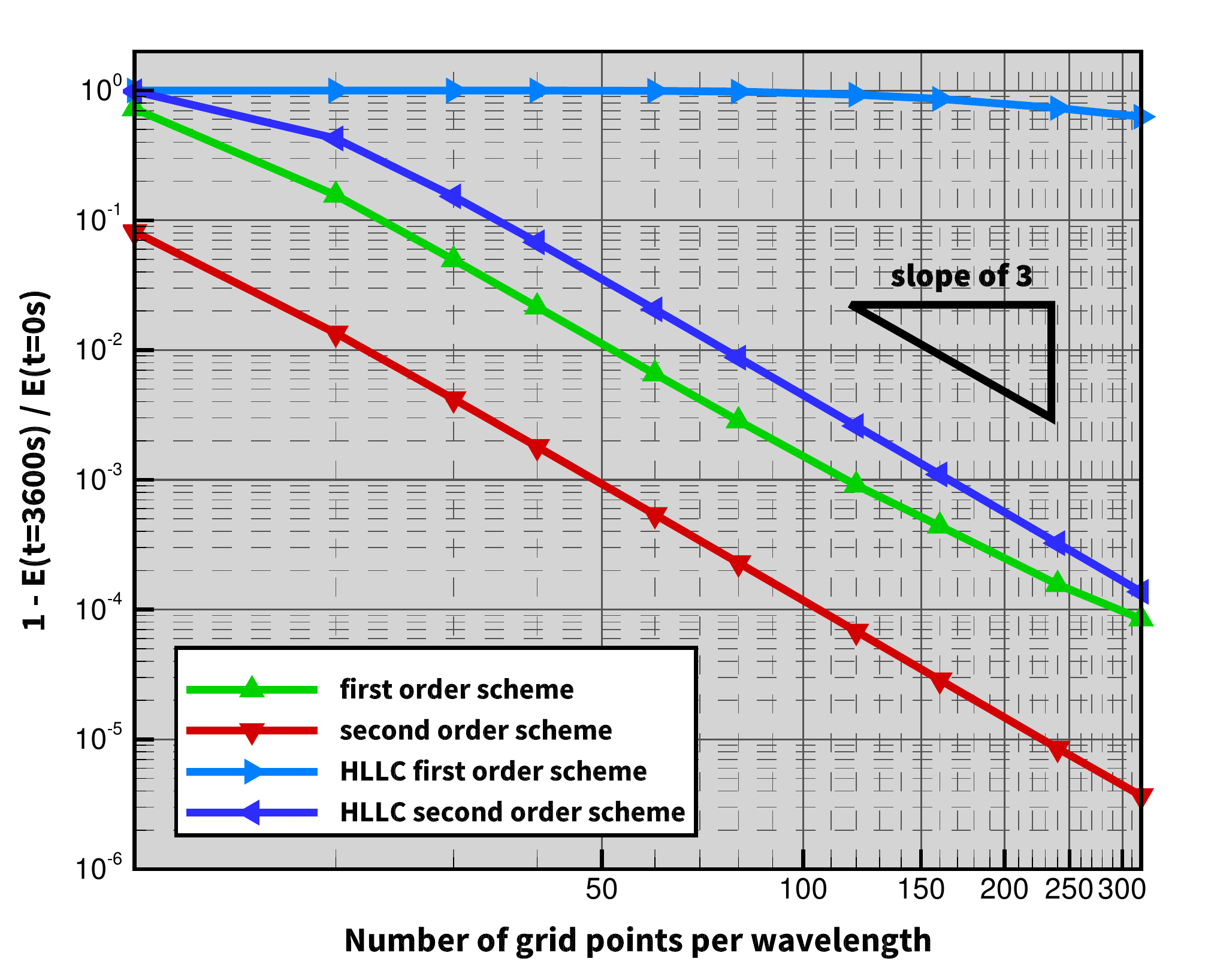} \includegraphics[width=0.49\textwidth]{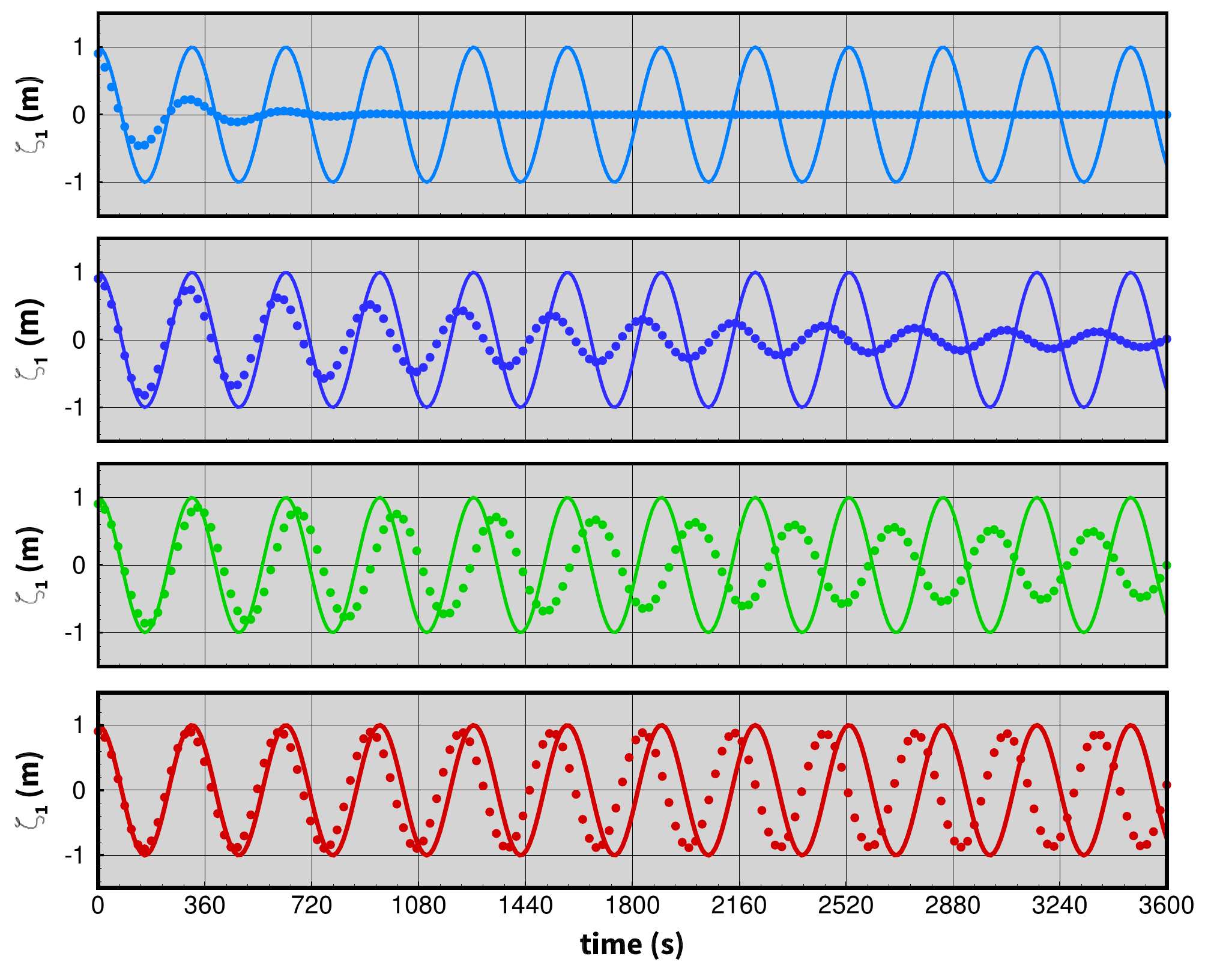}
\caption{Comparison with the HLLC scheme in the one-layer case; (\textit{left})
Mechanical energy dissipation for varying numbers of grid points per
wavelength for the present scheme and the HLLC scheme, at first and
second-order. (\textit{right}) Evolution of surface level at the box
center for a $11\times11$ mesh size with analytical solution in continuous
line and numerical solution in dotted line; \textit{(from top to bottom)}
first-order HLLC scheme, second-order HLLC scheme, present first-order
scheme with $\gamma=\alpha=0.5$ and present second-order scheme with
$\gamma=\alpha=0.1$, all with a $0.5$ CFL number.\textcolor{teal}{\label{fig:waves-one-layer}}}
\end{figure}

\subsection{Smooth surface wave propagation}

We investigate here the numerical scheme's accuracy for a smooth two-dimensional,
non-stationnary and non-linear solution. To this end, a water depth
Gaussian profile is placed in the bottom-left corner of a $500\ km$
square domain with prescribed slip boundaries:

\begin{equation}
\left\{ \begin{array}{l}
h(x,y,t=0)=h_{0}+h_{1}e^{{\displaystyle -r^{2}/2\sigma^{2}}}\\
\mathbf{u}(x,y,t=0)=\mathbf{0}
\end{array}\right.,\label{eq:pb-dambreak}
\end{equation}

\noindent where $r$ is the radial coordinate, $h_{0}=5000\ \mathrm{m}$,
$h_{1}=10\ \mathrm{m}$ and $\sigma=50\ \mathrm{km}$. We consider
a flat bottom and a gravitational acceleration $g=10\ m.s^{-2}$.
Considering a simulation time $t=600\:\mathrm{s}$, a reference solution
is generated using a $2560\times2560$ mesh and the second-order HLLC
scheme. Varying the meshes from $10^{2}$ to $320^{2}$ cells, the
$L^{2}$ absolute error norm between the numerical and reference solutions
(integrating it for each cell of the coarser mesh) are computed at
the end of the simulation. The results given in Tab.\ref{tab:dambreak}
are first showing that the expected orders of convergence are asymptotically
reached. Note the remarkable hierarchy for a given mesh size regarding
the computed error norm: the first-order HLLC scheme, the first-order
present scheme, the HLLC scheme with a Heun/MUSCL second-order extension
and the present second-order scheme with the same extension. The present
second-order scheme provides the smallest error norm independently
of the mesh size, except for the most refined cases where the result
is identical to the second-order HLLC scheme. The asymtotic convergence
to second-order is consequently more rapid for the HLLC scheme for
this test. Finally, the numerical solutions along the radial coordinate
computed with a coarse $20^{2}$ mesh for the four schemes are given
in Fig.\ref{fig:dambreak}, highlighting that our first-order method
is qualitatively as efficient as a second-order HLLC scheme.

\begin{table}[!tbh]
\begin{centering}
\begin{tabular}{>{\raggedleft}p{0.1\textwidth}>{\raggedleft}p{0.15\textwidth}>{\raggedleft}p{0.1\textwidth}>{\raggedleft}p{0.1\textwidth}>{\raggedleft}p{0.1\textwidth}>{\raggedleft}p{0.15\textwidth}>{\raggedleft}p{0.1\textwidth}}
\toprule 
$n_{x}\times n_{y}$ & $\epsilon_{L_{2}}$ & order &  & $n_{x}\times n_{y}$ & $\epsilon_{L_{2}}$ & order\tabularnewline
\midrule 
 &  &  &  &  &  & \tabularnewline
\multicolumn{3}{l}{HLLC first-order scheme} &  & \multicolumn{3}{l}{present first-order scheme ($\alpha=\gamma=0.5$)}\tabularnewline
\cmidrule{1-3} \cmidrule{5-7} 
$10^{2}$ & $3.18\:10^{-1}$ & - &  & $10^{2}$ & $2.25\:10^{-1}$ & -\tabularnewline
$20^{2}$ & $2.27\:10^{-1}$ & $0.49$ &  & $20^{2}$ & $1.11\:10^{-1}$ & $1.02$\tabularnewline
$40^{2}$ & $1.42\:10^{-1}$ & $0.68$ &  & $40^{2}$ & $3.76\:10^{-2}$ & $1.56$\tabularnewline
$80^{2}$ & $8.07\:10^{-2}$ & $0.82$ &  & $80^{2}$ & $1.42\:10^{-2}$ & $1.40$\tabularnewline
$160^{2}$ & $4.34\:10^{-2}$ & $0.90$ &  & $160^{2}$ & $6.25\:10^{-3}$ & $1.18$\tabularnewline
$320^{2}$ & $2.26\:10^{-2}$ & $0.94$ &  & $320^{2}$ & $2.99\:10^{-3}$ & $1.06$\tabularnewline
 &  &  &  &  &  & \tabularnewline
\multicolumn{3}{l}{HLLC second-order scheme} &  & \multicolumn{3}{l}{present second-order scheme ($\alpha=\gamma=0.1$)}\tabularnewline
\cmidrule{1-3} \cmidrule{5-7} 
$10^{2}$ & $1.69\:10^{-1}$ & - &  & $10^{2}$ & $1.16\:10^{-1}$ & -\tabularnewline
$20^{2}$ & $6.64\:10^{-2}$ & $1.35$ &  & $20^{2}$ & $4.70\:10^{-2}$ & $1.30$\tabularnewline
$40^{2}$ & $1.87\:10^{-2}$ & $1.83$ &  & $40^{2}$ & $1.72\:10^{-2}$ & $1.45$\tabularnewline
$80^{2}$ & $4.78\:10^{-3}$ & $1.97$ &  & $80^{2}$ & $4.67\:10^{-3}$ & $1.87$\tabularnewline
$160^{2}$ & $1.21\:10^{-3}$ & $1.98$ &  & $160^{2}$ & $1.21\:10^{-3}$ & $1.96$\tabularnewline
$320^{2}$ & $2.99\:10^{-4}$ & $2.02$ &  & $320^{2}$ & $3.00\:10^{-4}$ & $2.01$\tabularnewline
 &  &  &  &  &  & \tabularnewline
\bottomrule
\end{tabular}
\par\end{centering}
\caption{Numerical convergence results for the radial smooth surface wave propagation.
The errors $\epsilon_{L^{2}}$ refer to the absolute $L^{2}$ norm
between the computed numerical solution obtained with a $n_{x}\times n_{y}$
mesh size and the reference solution computed with a $2560\times2560$
mesh size.\label{tab:dambreak}}
\end{table}

\begin{figure}[!tbh]
\begin{centering}
\includegraphics[width=0.6\textwidth]{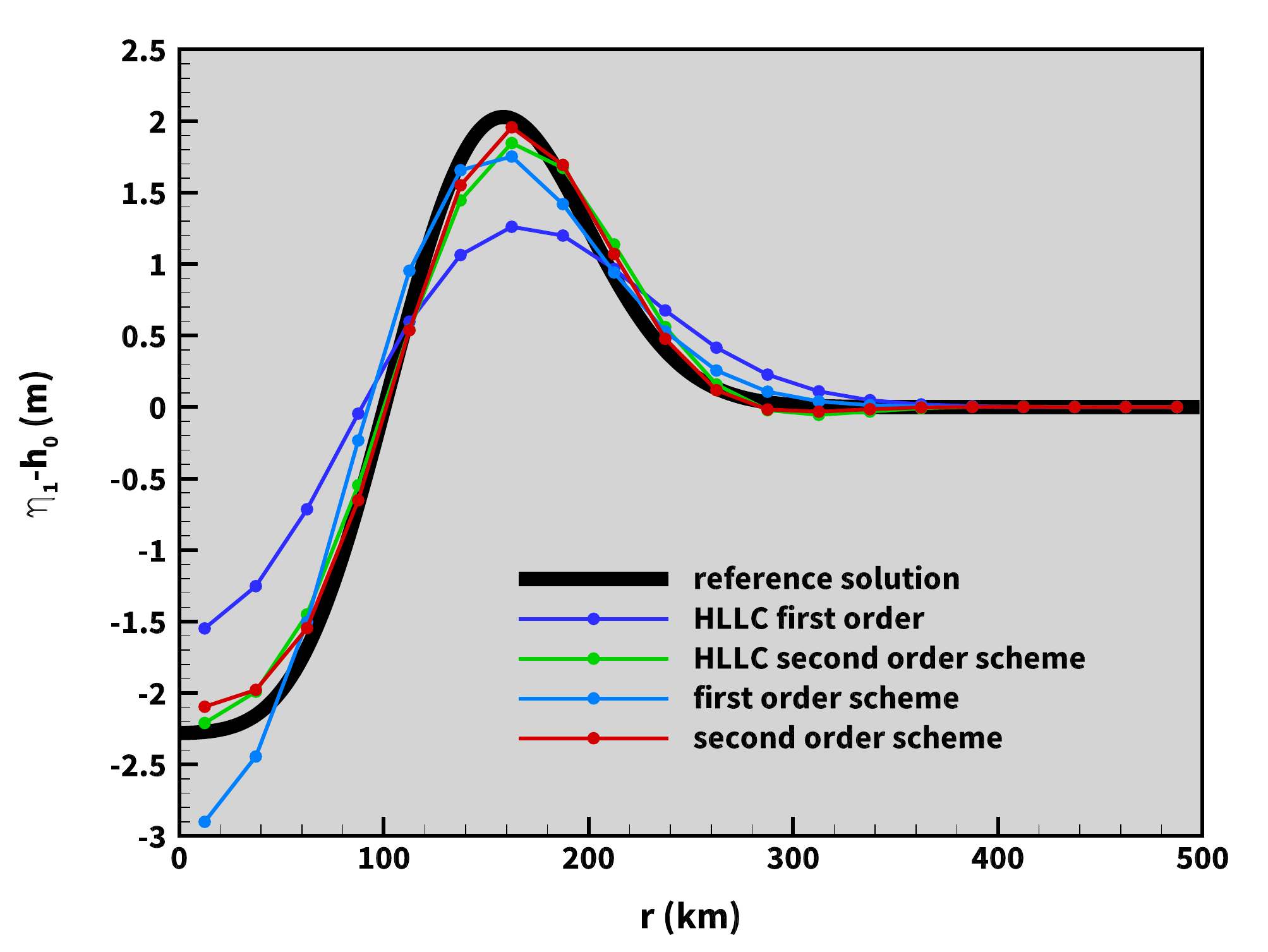}
\par\end{centering}
\caption{Numerical results for the radial smooth surface wave propagation obtained
at time $t=600\:\mathrm{s}$ along one radial axis computed with a
$20\times20$ mesh size. \label{fig:dambreak}}
\end{figure}

\subsection{Small perturbation of a lake at rest\label{subsec:lake}}

This test case proposed in \citep{Leveque1998} and reproduced for
example in \citep{Noelle2006}, \citep{Ricchiuto2013} and \citep{Tavelli2014}
is intended to check the scheme's ability to deal both with the well-balanced
property and the propagation of a jump in the initial water surface
elevation. It should be recalled that the first-order scheme is well-balanced
by construction and that this property easily extends to the second-order
MUSCL reconstruction scheme, as it has been discussed in \S \ref{well-balancing}.

This test involves a two-dimensional rectangular computational domain
$\left[0,2\right]\times\left[0,1\right]$ and a non linear topography
at the bottom:

\begin{equation}
z_{b}=0.8\:e^{{\displaystyle \left(-5\left(x-0.9\right)^{2}-50\left(y-0.5\right)^{2}\right)}}.\label{eq:lake-topo}
\end{equation}

First, considering an initial motionless constant water surface elevation
$\eta_{1}=1$, the solution should stay at rest. At first and second-order
in space, it is found that whatever the simulation time is, the water
surface elevation and velocity error norms are exactly zero because
of the exact flux balance with respect to the discrete potential.

Next, following the original test case in \citep{Leveque1998}, we
consider a jump of water surface elevation:

\begin{equation}
\eta_{1}(x,y,t=0)=\left\{ \begin{array}{ll}
1.01 & \text{if }0.05\leq x\leq0.15\\
1 & \text{otherwise}
\end{array}\right..\label{eq:lake-ini}
\end{equation}

Slip boundaries are prescribed except an idealized outflow at western
boundary, considering an extended computational domain to avoid any
reflexion, as the initial water surface bump generates left- and right-going
waves. Considering a simulation time $t=0.46\:s$, some snapshots
are given in Fig.\ref{fig:lake_results} for the present scheme, at
first and second-order for a relatively coarse $300\times100$ mesh
(\textit{bottom}). Using the same resolution, these results can be
compared with the second order HLLC scheme, and an highly resolved
solution, serving as reference (\textit{top}). A Barth limiter \citep{Barth2004}
has been used for the reconstructed water surface elevation to prevent
from too much dispersive solutions. Our scheme is reproducing qualitatively
very well the complex flow dynamics. However, notably due to the discontinuous
nature of the initial solution, the stabilization constants must be
taken higher than the optimal ones found in the previous test case
(Tab.\ref{tab:stability-criteria-found}) to avoid spurious oscillations.
Cross sections of the final solution are displayed in Fig.\ref{fig:lake_results-2},
showing again a low level of numerical diffusion in comparison with
the classical HLLC scheme. In conclusion, our scheme can be succesfully
employed for this kind of complex flows, implying an initial jump
and non trivial topography. These observations also tend to indicate
that the scheme's efficiency can be significantly improved with an
adjustment of the constants $\gamma$ and $\alpha$, according to
the local regularity of the discrete solution. Additional theoretical
and numerical investigations are currently in progress in that direction.

\begin{figure}[!tbh]
\begin{centering}
\includegraphics[width=0.49\textwidth]{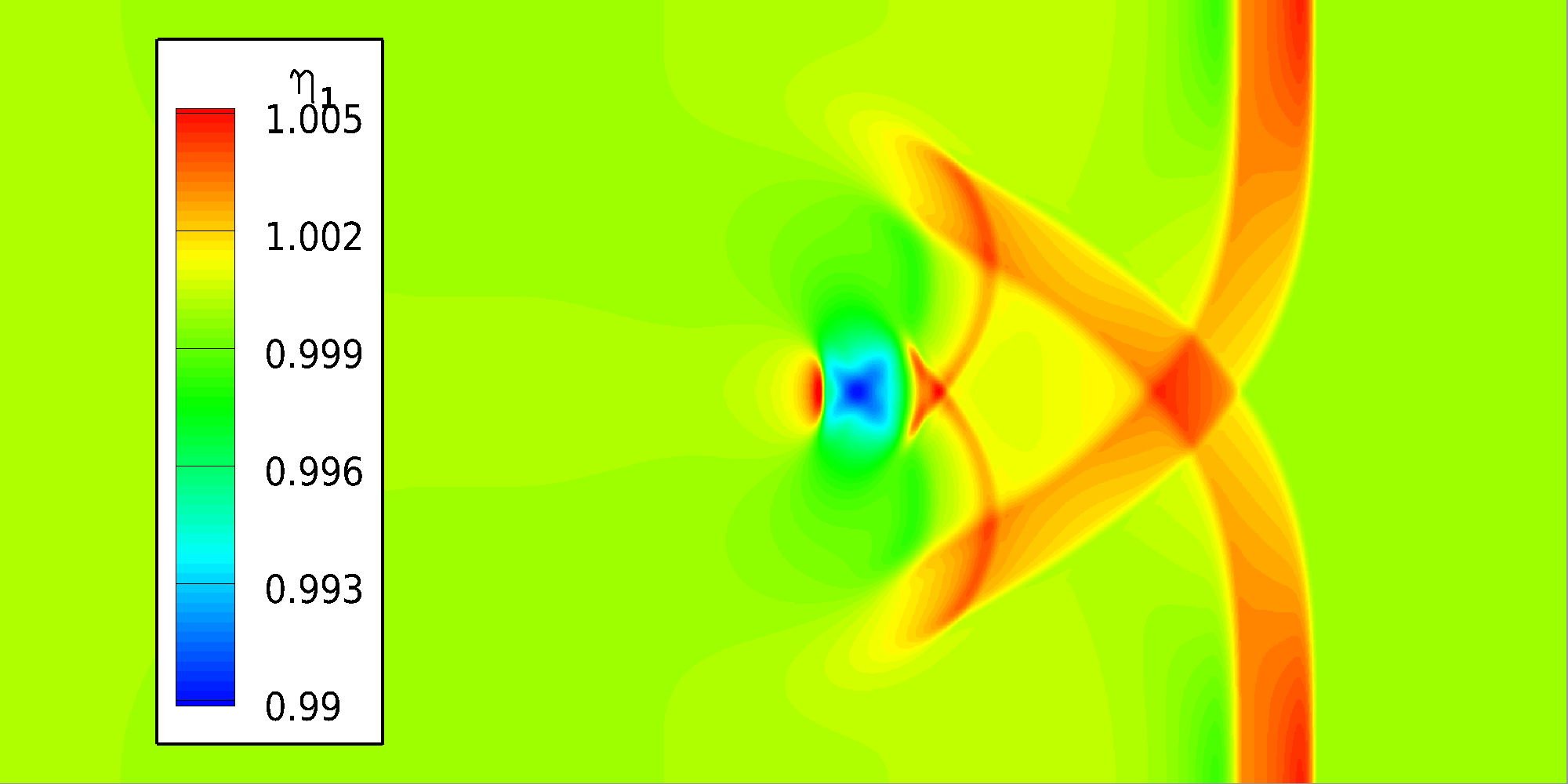}\includegraphics[width=0.49\textwidth]{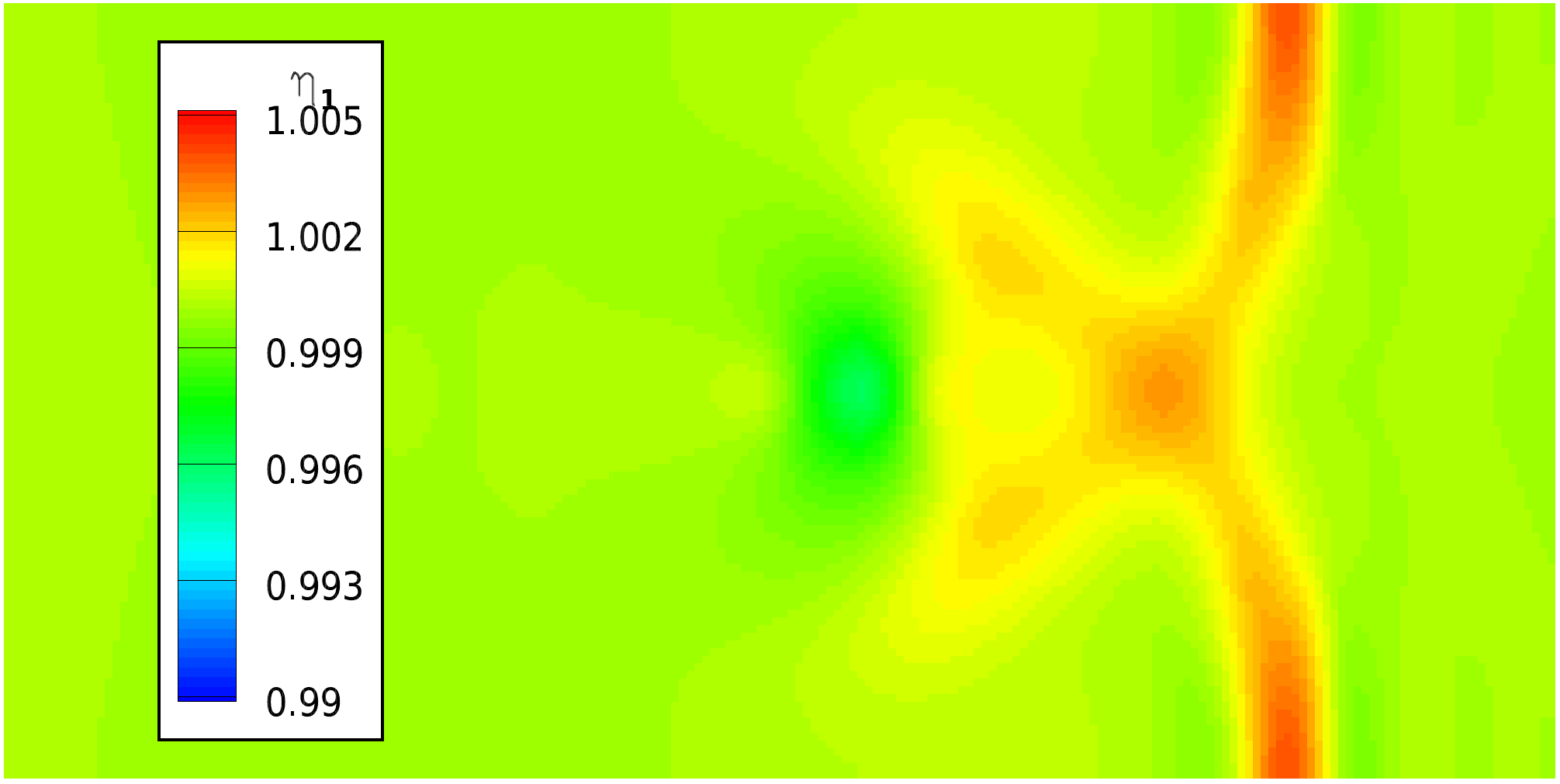}
\par\end{centering}
\begin{centering}
\includegraphics[width=0.49\textwidth]{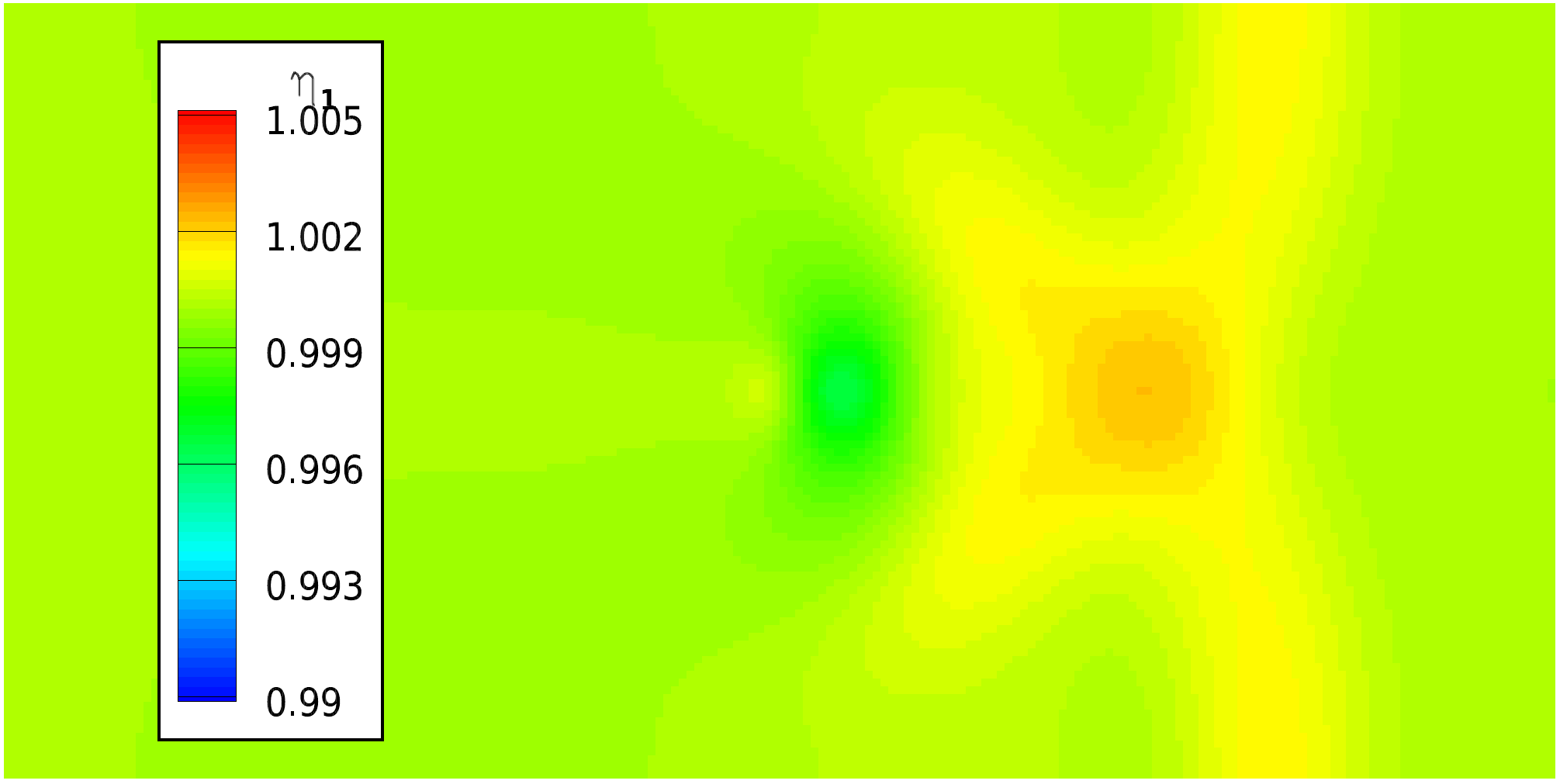}\includegraphics[width=0.49\textwidth]{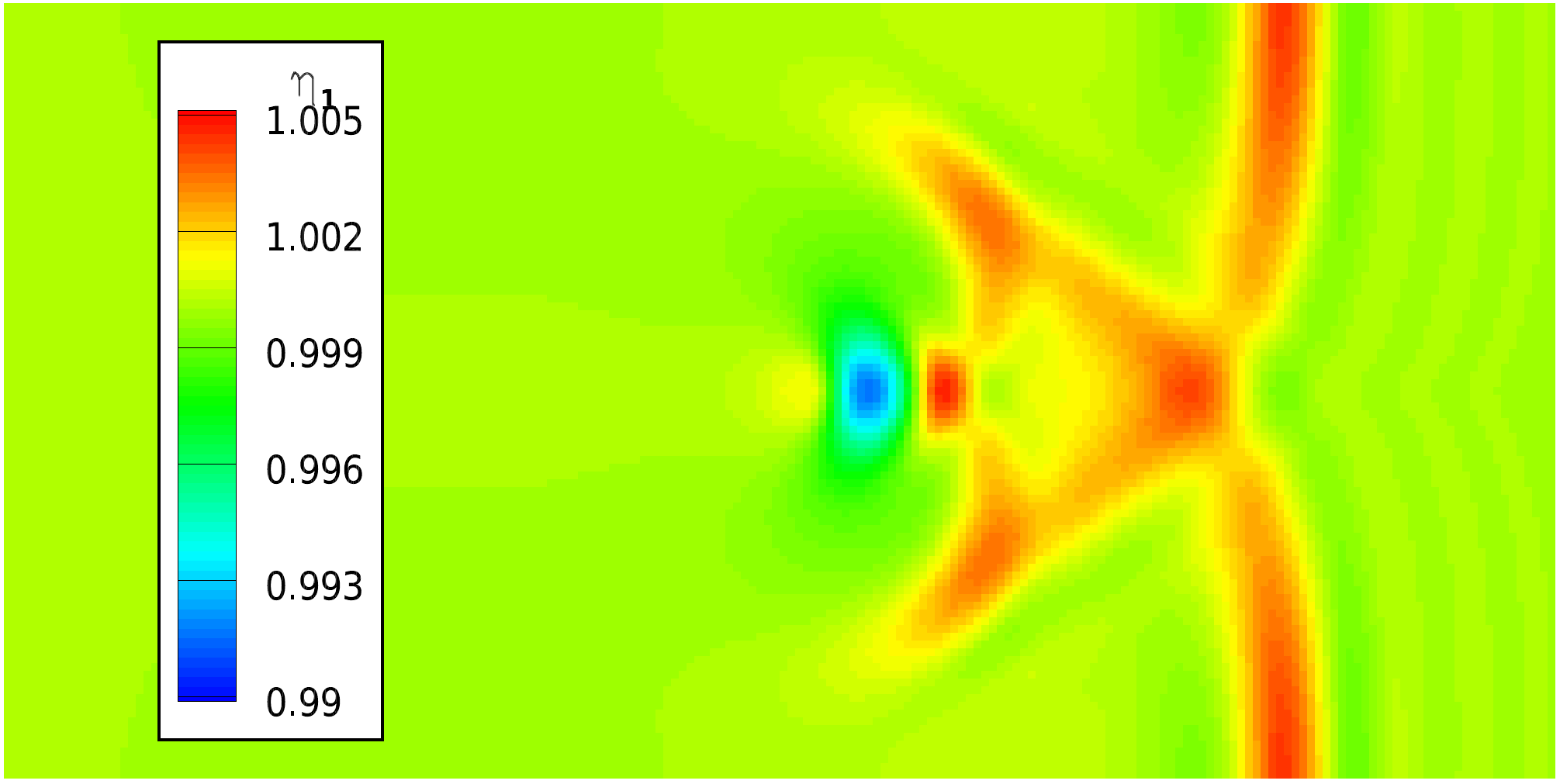}
\par\end{centering}
\caption{Numerical simulations of a propagating wave considering the bottom
topography (\ref{eq:lake-topo}) and the initial condtion (\ref{eq:lake-ini});
water surface top view at time $t=0.46\:s$; (\textit{top}) using
the second-order HLLC scheme and a $1600\times800$ mesh size (\textit{left}),
and a $300\times100$ mesh size \textit{(right}); (\textit{bottom})
using a $300\times100$ mesh size and the present first-order scheme
with $\gamma=\alpha=1$ \textit{(left)} and the present second-order
scheme with $\gamma=\alpha=0.5$ \textit{(right)}.\label{fig:lake_results}}
\end{figure}

\begin{figure}[!tbh]
\begin{centering}
\includegraphics[width=0.6\textwidth]{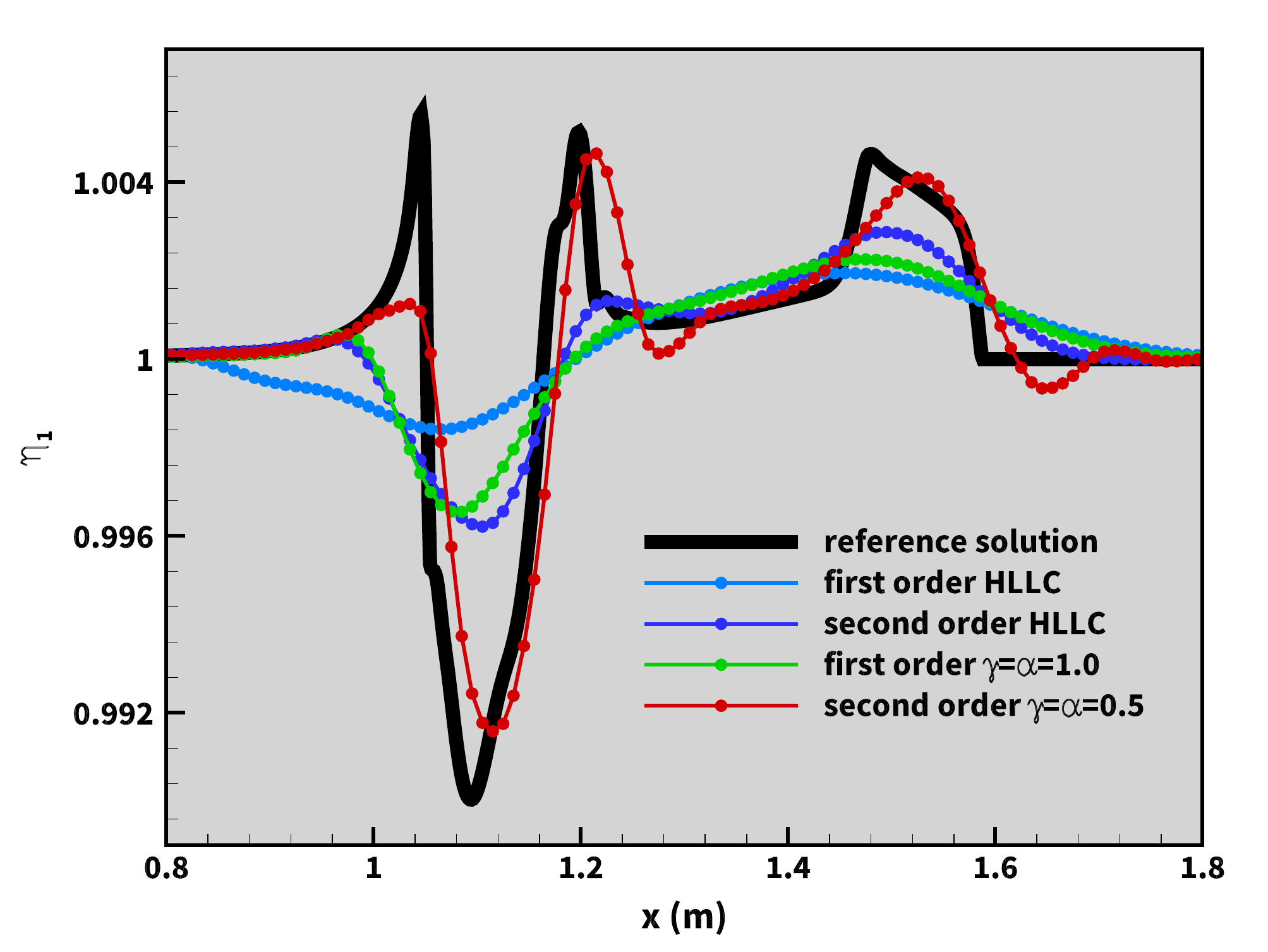}
\par\end{centering}
\caption{Slides view corresponding to the simulations in Fig.\ref{fig:lake_results}
along the horizontal axis at the middle of the domain, at time $t=0.46\:s$
and for a $300\times100$ mesh. A Barth limiter \citep{Barth2004}
has been used for the MUSCL reconstructed water surface elevation.\label{fig:lake_results-2}}
\end{figure}

\subsection{Baroclinic vortex\label{subsec:baroclinic-vortex}}

Based on the COMODO benchmark \citep{COMODO}, we study here an idealized
axisymmetric and anticyclonic baroclinic vortex initially centered,
propagating south-westward due to a $\beta$-plane approximation,
following the numerical experiment proposed in \citep{Penven2006}.
The vortex is expected to approximately retains its axisymmetric shape
with a progressive decrease of energy along its trajectory, mainly
in the wave of emissions of weak-amplitude Rossby-waves. This last
test case represents a good indicator of the scheme's accuracy in
the frame of a complex flow with several layers, the principal difficulty
lying in the capability to describe accurately the vortex motion.
Indeed, the numerical diffusion and dispersion induced by unsuitable
schemes can quickly break the cyclostrophic balance and subsequently
deteriorate the vortex trajectory.

\subsubsection{Initialization}

A vortex is placed at the center of the box $\left[-900\ \mathrm{km},900\ \mathrm{km}\right]^{2}$
with boundary walls according to an axisymmetric Gaussian pressure
profile:

\begin{equation}
\begin{array}{l}
\eta_{1}={\displaystyle \frac{P_{0}}{g\rho_{0}}}e^{\nicefrac{-r^{2}}{2\lambda^{2}}}\end{array}\,,
\end{equation}

\noindent where $\rho_{0}=1024.4\ \mathrm{kg}.\mathrm{m}^{-3}$ is
the density at sea surface, $g=9.81\ \mathrm{m}.\mathrm{s}^{-2}$
is the gravitational acceleration, $\lambda=60\ \mathrm{km}$ and
$P_{0}=\rho_{0}f_{0}u_{max}\lambda\sqrt{e}$ is a pressure defined
from a maximum velocity $u_{max}=0.8\ \mathrm{m}.\mathrm{s}^{-1}$,
giving an anticyclonic vortex. In each layer $i$, the vortex at cyclostrophic
equilibrium respects an axisymmetric balance between centripetal acceleration
$v_{i,\theta}$, pressure $p_{i}$ and Coriolis force:

\begin{equation}
-{\displaystyle \frac{v_{i,\theta}^{2}}{r}}-fv_{i,\theta}+{\displaystyle \frac{dp_{i}}{dr}}=0\,.
\end{equation}

Eliminating the unphysical solution, we obtain the final velocity
expression in each layer as a function of the layer pressure gradient
in cylindrical coordinates:

\begin{equation}
v_{i,\theta}=-{\displaystyle \frac{fr}{2}}\left(1-\sqrt{1+{\displaystyle \frac{4{\displaystyle \frac{dp_{i}}{dr}}}{rf^{2}}}}\right)\,.\label{eq:cyclo_vel}
\end{equation}
Note that with simulations initialized with a velocity at geostrophic
equilibrium $v_{i,\theta}={\displaystyle -\frac{1}{f}\frac{dp_{i}}{dr}}$
as prescribed in the original test case \citep{COMODO}, numerical
approximations generates too much undesirable small scale waves because
of the initial imbalance at the continuous level (ignoring the $\beta$-plane
approximation). As their wavelength decreases with the mesh size,
and as we have seen that our scheme does not dissipate high frequencies,
it involves an improper convergence. As a consequence, the $u_{max}$
considered here is smaller than in the original test case to ensure
the positivity of the term in the square root in (\ref{eq:cyclo_vel}).

A $\beta$-plane approximation is made for the Coriolis force:

\begin{equation}
\begin{array}{l}
f=f_{0}+\beta y\end{array}\,,
\end{equation}

\noindent with a latitude $\theta=38.5\text{\textdegree}$, giving
the two constants $f_{0}=2\Omega\sin\left(\theta\right)\simeq9,054\ 10^{-5}$
and $\beta=2\Omega\cos\left(\theta\right)/R_{earth}\simeq1,788\ 10^{-11}$.
The density distribution involves ten layers at rest, evenly sized,
following the linear law:

\begin{equation}
\begin{array}{l}
\rho_{i}=\rho_{0}\left(1-{\displaystyle \frac{N^{2}}{g}z_{i}}\right)\end{array}\quad\text{with}\quad z_{i}={\displaystyle \frac{h_{0}\left(i-\frac{1}{2}\right)}{N}}\,,
\end{equation}

\noindent where $N=3.10^{-3}\ \mathrm{s}^{-1}$ is the Brunt-Väisälä
frequency and $h_{0}=5000\ \mathrm{m}$ is the unperturbed sea surface
height. No motion is prescribed under a level $h_{1}=2500\ \mathrm{m}$
in order to prevent from fast barotropic modes as prescribed in the
original test case \citep{COMODO}. It is derived here a formal way
to nullify the velocity starting from the 6th layer. For a $L$ layers
system, the potential in the layer $i$ can be written:

\begin{equation}
\Phi_{i}={\displaystyle \frac{g}{\rho_{i}}}\left({\displaystyle \rho_{1}\eta_{1}+\sum_{k=2}^{i}\left(\rho_{k}-\rho_{k-1}\right)\eta_{k}}\right)\,.
\end{equation}

If we suppose $\nabla\Phi_{i}=0$, it can be found that:

\begin{equation}
\rho_{1}\nabla\eta_{1}+{\displaystyle \sum_{k=2}^{i}\left(\rho_{k}-\rho_{k-1}\right)\nabla\eta_{k}}=0\,.
\end{equation}

If we suppose in addition that $\forall k>i\:,\:\nabla\eta_{k}=0$,
then we find also $\forall k>i\:,\:\nabla\Phi_{k}=0$. Suppose now
that $\nabla\eta_{k+1}=r\nabla\eta_{k}$, then:

\begin{equation}
\rho_{1}\nabla\eta_{1}+\nabla\eta_{2}{\displaystyle \sum_{k=2}^{i}\left(\rho_{k}-\rho_{k-1}\right)r^{k-2}}=0\,,
\end{equation}

giving the final expression for the water surface elevation gradient:

\begin{equation}
\nabla\eta_{i}=\frac{-\rho_{1}r^{i-2}\nabla\eta_{1}}{{\displaystyle \sum_{k=2}^{i}\left(\rho_{k}-\rho_{k-1}\right)r^{k-2}}}\,.
\end{equation}

from which we extract the final water surface elevation distribution
with $r=1$ adding the layer level at rest as a constant. Let us notice
the inverse sign of the internal layer gradients compared to the sea
surface gradient $\nabla\eta_{1}$ (since $\rho_{k}>\rho_{k-1}$),
implying a pressure gradient decrease. Finally, we recall that we
do not consider any viscosity or bottom friction effects in this test
case. Note finally that $\epsilon\thickapprox3.6\:10^{-3}$ for this
test case.

\subsubsection{Simulations}

\begin{figure}[!tbh]
\begin{centering}
\includegraphics[width=0.45\textwidth]{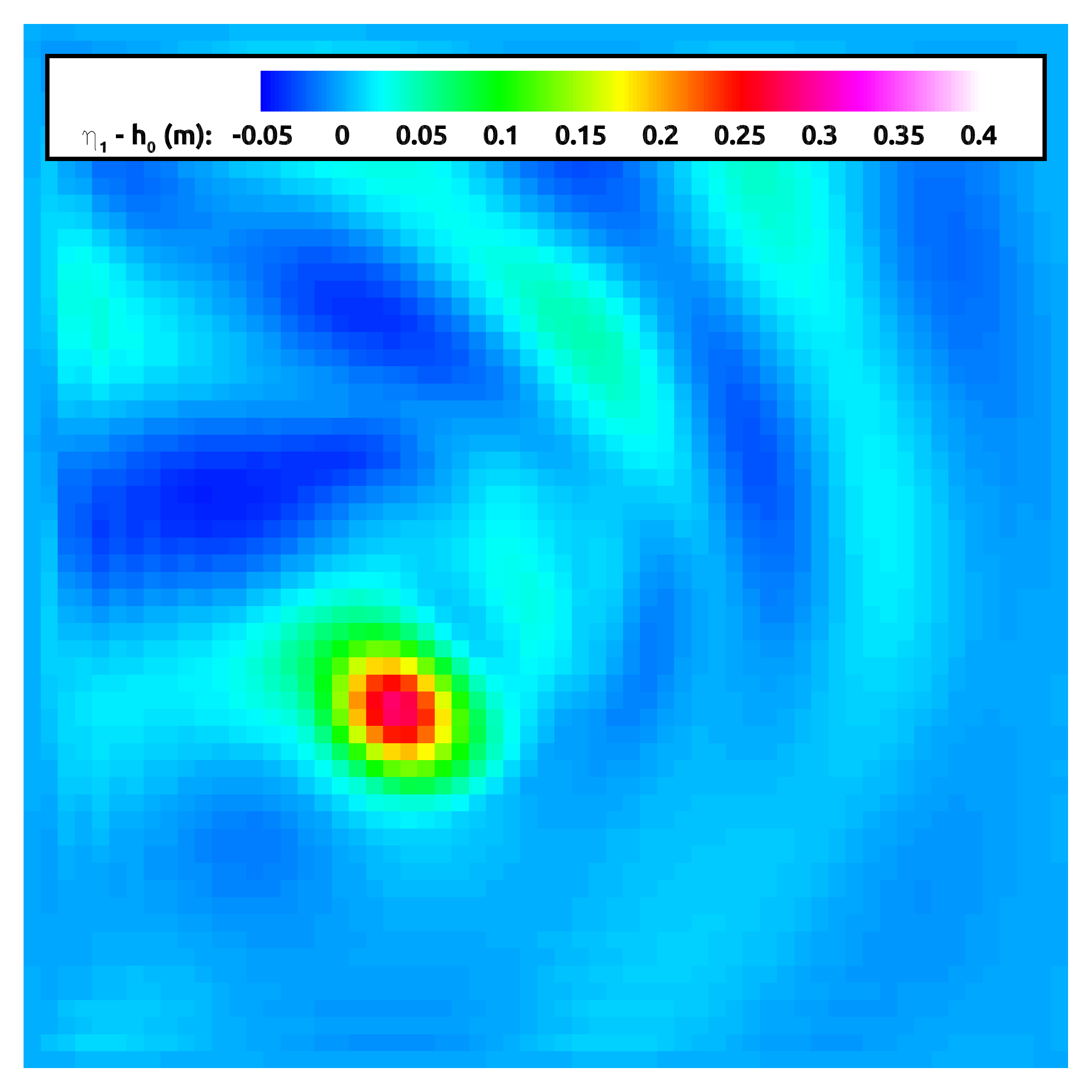}\includegraphics[width=0.45\textwidth]{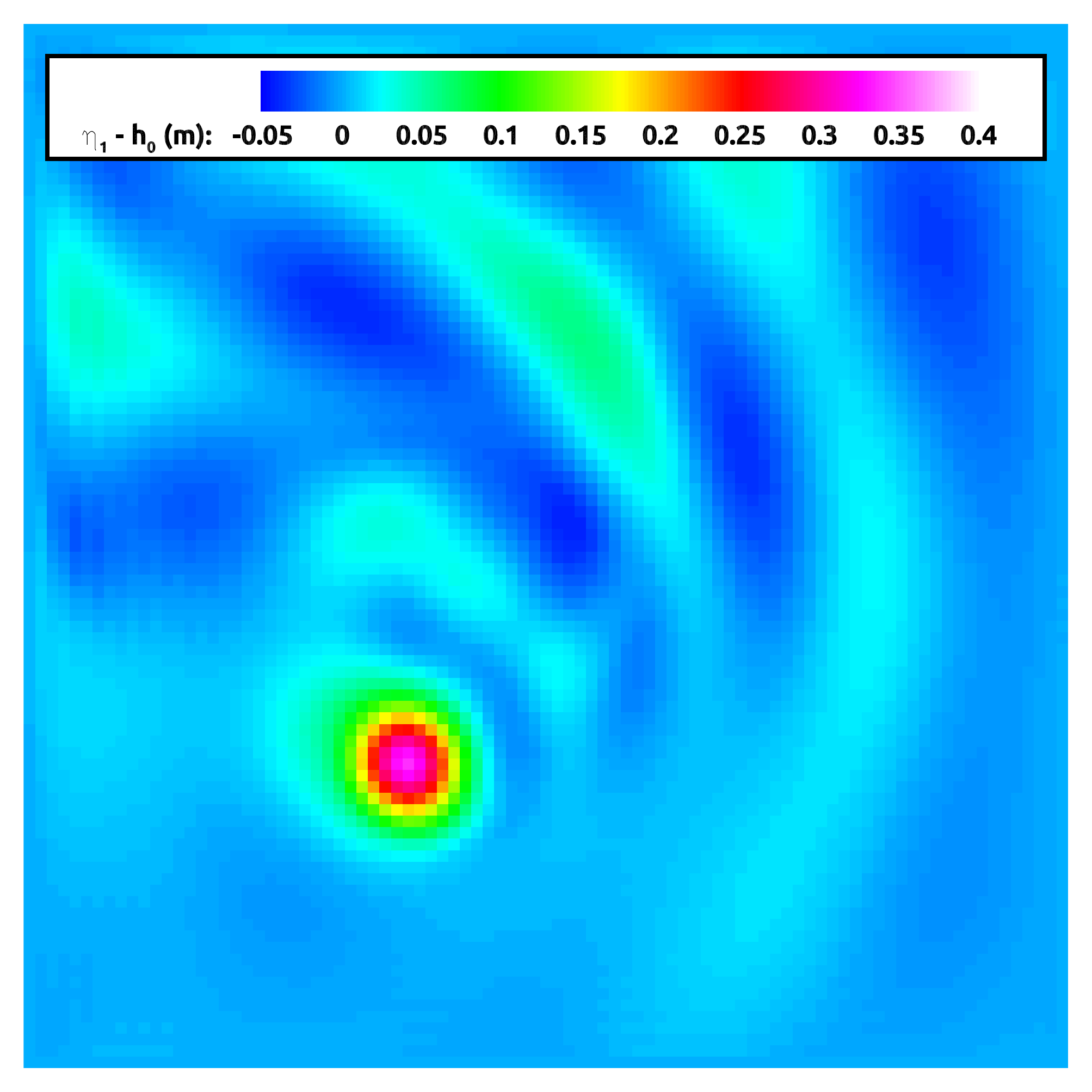} 
\par\end{centering}
\begin{centering}
\includegraphics[width=0.45\textwidth]{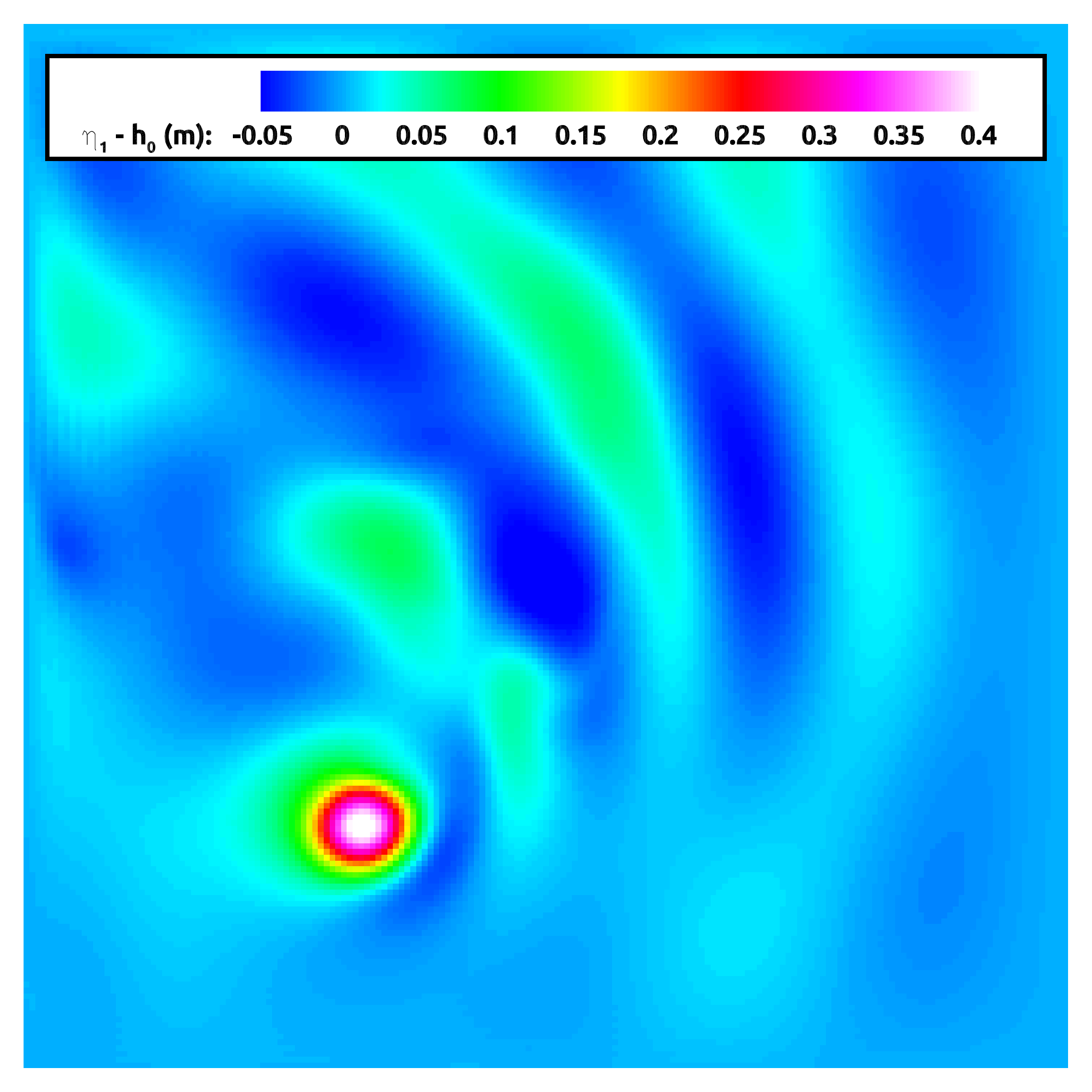}\includegraphics[width=0.45\textwidth]{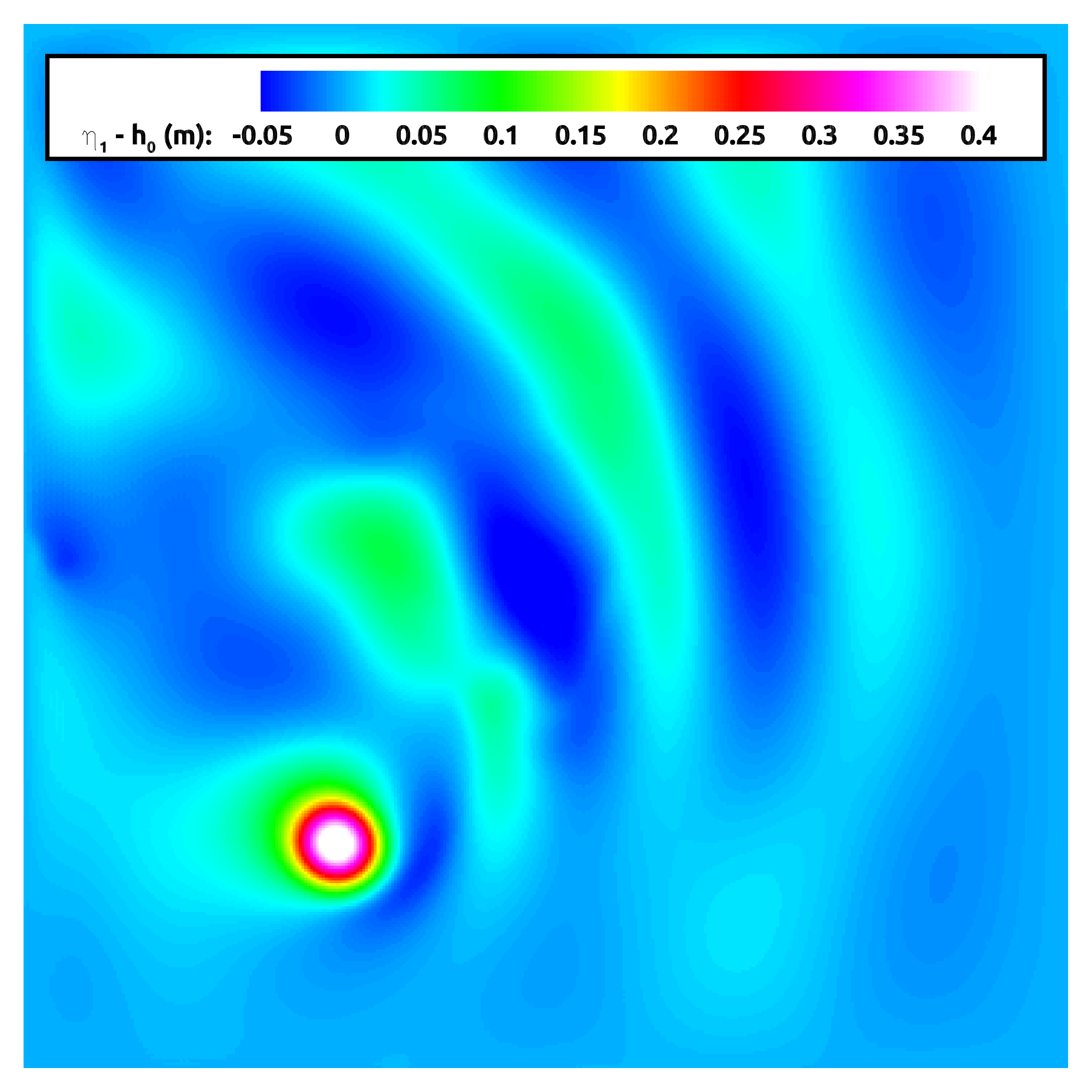} 
\par\end{centering}
\caption{Sea surface height for the baroclinic vortex test case obtained with
the present second-order scheme. After 100 days of simulation, the
vortex, initially centered, has moved to the southwest and small amplitudes
Rossby waves emission can be observed in the trajectory wake; (\textit{top-left})
$\Delta x=30\ \mathrm{km}$; (\textit{top-right}) $\Delta x=20\ \mathrm{km}$;
(\textit{bottom-left}) $\Delta x=10\ \mathrm{km}$; (\textit{bottom-right})
$\Delta x=5\ \mathrm{km}$.\label{fig:vortex-surfaces}}
\end{figure}

Simulations have been performed using the second-order scheme presented
in the Appendix \ref{subsec:Second-order-extension} with a time integration
period of $100$ days with five space resolutions $\Delta x=30\,\mathrm{km}\,,\,20\,\mathrm{km}\,,\,10\,\mathrm{km}\,,\,5\,\mathrm{km}\;\text{and}\;2\,\mathrm{km}$
corresponding respectively to discretizations of space domain with
$60\times60\times10\,,\,90\times90\times10\,,\,180\times180\times10\,,\,360\times360\times10\;\text{and}\;900\times900\times10$
cells and layers. It has been chosen $\gamma=0.2$ and $\alpha=0$
for the stabilization constants coefficients, with a CFL number of
$0.5$. We have seen before that this set of parameters is sufficient
to ensure the linear stability of the numerical scheme \S\ref{linear}.

The sea surface height for the first four resolutions are given in
Fig.\ref{fig:vortex-surfaces}. It can be roughly observed a relative
rapid convergence since the solutions for the $10\,\mathrm{km}$ and
$5\,\mathrm{km}$ resolutions are already very close. The vortex final
shape as well as the position and amplitude of the Rossby waves in
the trajectory wake are very similar, excepted maybe for very fine
structures. For the lower resolutions of $30\,\mathrm{km}$ and $20\,\mathrm{km}$,
the final axisymmetric vortex shape has not been completely broken,
resulting to relatively acceptable simulations. The large structures
of the emitted Rossby waves are correctly captured, especially the
two bands in the northeast. However, the vortex has clearly lost an
important energy as its maximum amplitude is lower than for the more
refined meshes.

Going further in the convergence analysis, it is given in Fig.\ref{fig:vortex-conv}
the time evolution of the vortex $y$-deviation (computed from the
maximum amplitude with bilinear interpolation), the vortex maximum
amplitude, the kinetic and mechanical energies, obtained from (\ref{eq:Discrete_Energy})
(substracting to the potential energy the unperturbed state contribution,
the mechanical energy has been rescaled to the initial value). The
overall results for the $5\,\mathrm{km}$ and $2\,\mathrm{km}$ are
sufficiently close to consider that the convergence has been very
nearly reached. The $2\,\mathrm{km}$ resolution exhibits a really
minimum of dissipation and will stand for a reference solution. We
give in Tab.\ref{tab:vortex} the associated $L\text{\texttwosuperior}$
error norms in time using this solution as reference. An asymptotic
convergence of 2 seems to be reached for all the diagnostic quantities.
Considering the $10\,\mathrm{km}$ resolution (an average mesh resolution
for oceanic simulations in practice) all the results are in very good
agreement with the reference solution. Towards the end of the simulation,
the kinetic energy loss starts to move away the vortex trajectory
from the converged one. For the two lower resolutions, the kinetic
energy is lost at the beginning of the simulation because of an initial
numerical imbalance between centripetal acceleration, pressure and
Coriolis forces. A lower decrease can be observed afterwards, highlighting
a good accuracy for long time simulations.

\begin{figure}[!tbh]
\begin{centering}
\includegraphics[width=0.9\textwidth]{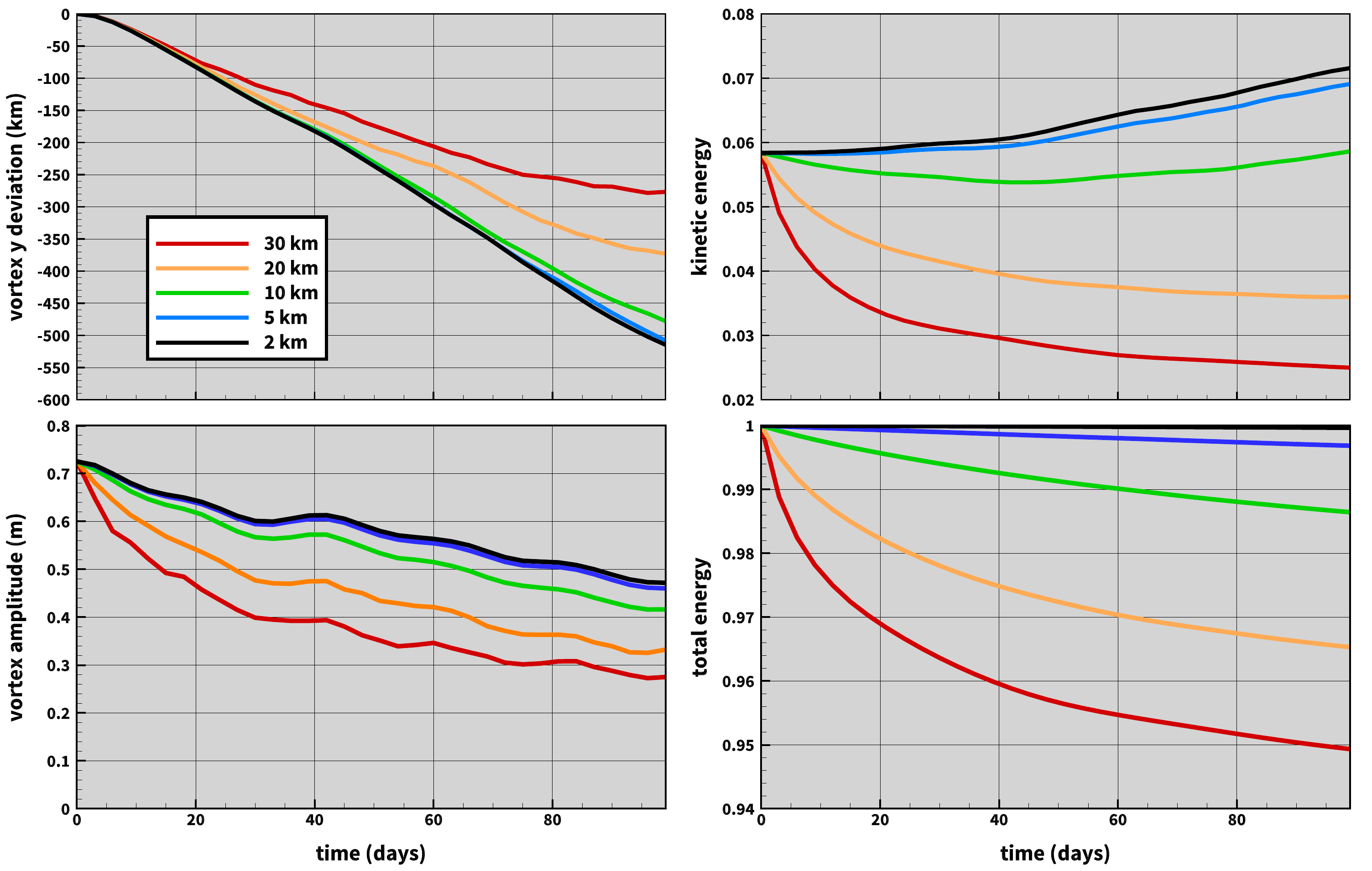} 
\par\end{centering}
\caption{Time evolution of some revelant diagnostic quantities for the baroclinic
vortex test case using the present second-order scheme. Simulations
stabilization constants are $\gamma=0.2$ and $\alpha=0$ with the
second-order scheme. Kinetic and total energies are computed from
(\ref{eq:Discrete_Energy}).\label{fig:vortex-conv}}
\end{figure}

\begin{table}[!tbh]
\begin{centering}
\begin{tabular}{>{\raggedleft}p{0.1\textwidth}>{\raggedleft}p{0.15\textwidth}>{\raggedleft}p{0.1\textwidth}>{\raggedleft}p{0.1\textwidth}>{\raggedleft}p{0.1\textwidth}>{\raggedleft}p{0.15\textwidth}>{\raggedleft}p{0.1\textwidth}}
\toprule 
$\Delta x$ & $\epsilon_{L_{2}}$ & order &  & $\Delta x$ & $\epsilon_{L_{2}}$ & order\tabularnewline
\midrule 
 &  &  &  &  &  & \tabularnewline
\multicolumn{3}{l}{Vortex amplitude (m)} &  & \multicolumn{3}{l}{Vortex y-deviation (km)}\tabularnewline
\cmidrule{1-3} \cmidrule{5-7} 
$30\,\mathrm{\mathrm{km}}$ & $1.99\:10^{-1}$ & - &  & $30\,\mathrm{\mathrm{km}}$ & $1.11\:10^{2}$ & -\tabularnewline
$20\,\mathrm{\mathrm{km}}$ & $1.31\:10^{-1}$ & $1.03$ &  & $20\,\mathrm{\mathrm{km}}$ & $6.37\:10^{1}$ & $1.37$\tabularnewline
$10\,\mathrm{\mathrm{km}}$ & $4.37\:10^{-2}$ & $1.58$ &  & $10\,\mathrm{\mathrm{km}}$ & $1.48\:10^{1}$ & $2.11$\tabularnewline
$5\,\mathrm{\mathrm{km}}$ & $8.13\:10^{-3}$ & $2.43$ &  & $5\,\mathrm{km}$ & $3.65\:10^{0}$ & $2.02$\tabularnewline
 &  &  &  &  &  & \tabularnewline
\multicolumn{3}{l}{Kinetic Energy} &  & \multicolumn{3}{l}{Mechanical Energy}\tabularnewline
\cmidrule{1-3} \cmidrule{5-7} 
$30\,\mathrm{\mathrm{km}}$ & $3.43\:10^{-2}$ & - &  & $30\,\mathrm{\mathrm{km}}$ & $4.07\:10^{-2}$ & -\tabularnewline
$20\,\mathrm{\mathrm{km}}$ & $2.45\:10^{-2}$ & $0.83$ &  & $20\,\mathrm{\mathrm{km}}$ & $2.63\:10^{-2}$ & $1.08$\tabularnewline
$10\,\mathrm{\mathrm{km}}$ & $8.57\:10^{-3}$ & $1.52$ &  & $10\,\mathrm{\mathrm{km}}$ & $8.80\:10^{-3}$ & $1.58$\tabularnewline
$5\,\mathrm{\mathrm{km}}$ & $1.59\:10^{-3}$ & $2.43$ &  & $5\,\mathrm{\mathrm{km}}$ & $1.70\:10^{-3}$ & $2.37$\tabularnewline
 &  &  &  &  &  & \tabularnewline
\bottomrule
\end{tabular}
\par\end{centering}
\caption{Numerical convergence results for the baroclinic vortex using the
present second-order scheme. The errors $\epsilon_{L^{2}}$ refer
to the $L^{2}$ error norm in time between the numerical solution
and the reference solution computed with $\Delta x=2\,\mathrm{km}$.\label{tab:vortex}}
\end{table}

From a numerical stablity point of view, it can be observed for all
the resolutions a strict decrease of the mechanical energy for the
chosen pair of stabilization constants $\gamma$ and $\alpha$. It
appears that the pressure stabilization term (\ref{alpen}) is not
required here to ensure a strict mechanical energy decrease, although
the simulated flow is very complex. Since this term is proportional
to the velocity divergence, it could be explained by a flow always
very close to the incompressible condition. Another explanation could
be the introduction of the Coriolis force that may have an impact
on the stability conditions. We also performed another series of simulations
for the $10\,\mathrm{km}$ resolution, with $\alpha=0.05\,,\,0.10\,,\,0.15$
and $0.20$ keeping the same other simulation parameters. The results
given in Fig.\ref{fig:vortex-alpha-influence} show a quick deterioration
for $\alpha$ increasing values. All the diagnostic quantities are
approximately in the range of the $20\,\mathrm{\mathrm{km}}$ and
$30\,\mathrm{\mathrm{km}}$ resolution results killing this stabilization
term. The pressure term is impacted by a more important initial numerical
imbalance. It can be easily verified looking at the initial kinetic
energy decrease.

\begin{figure}[!tbh]
\begin{centering}
\includegraphics[width=0.9\textwidth]{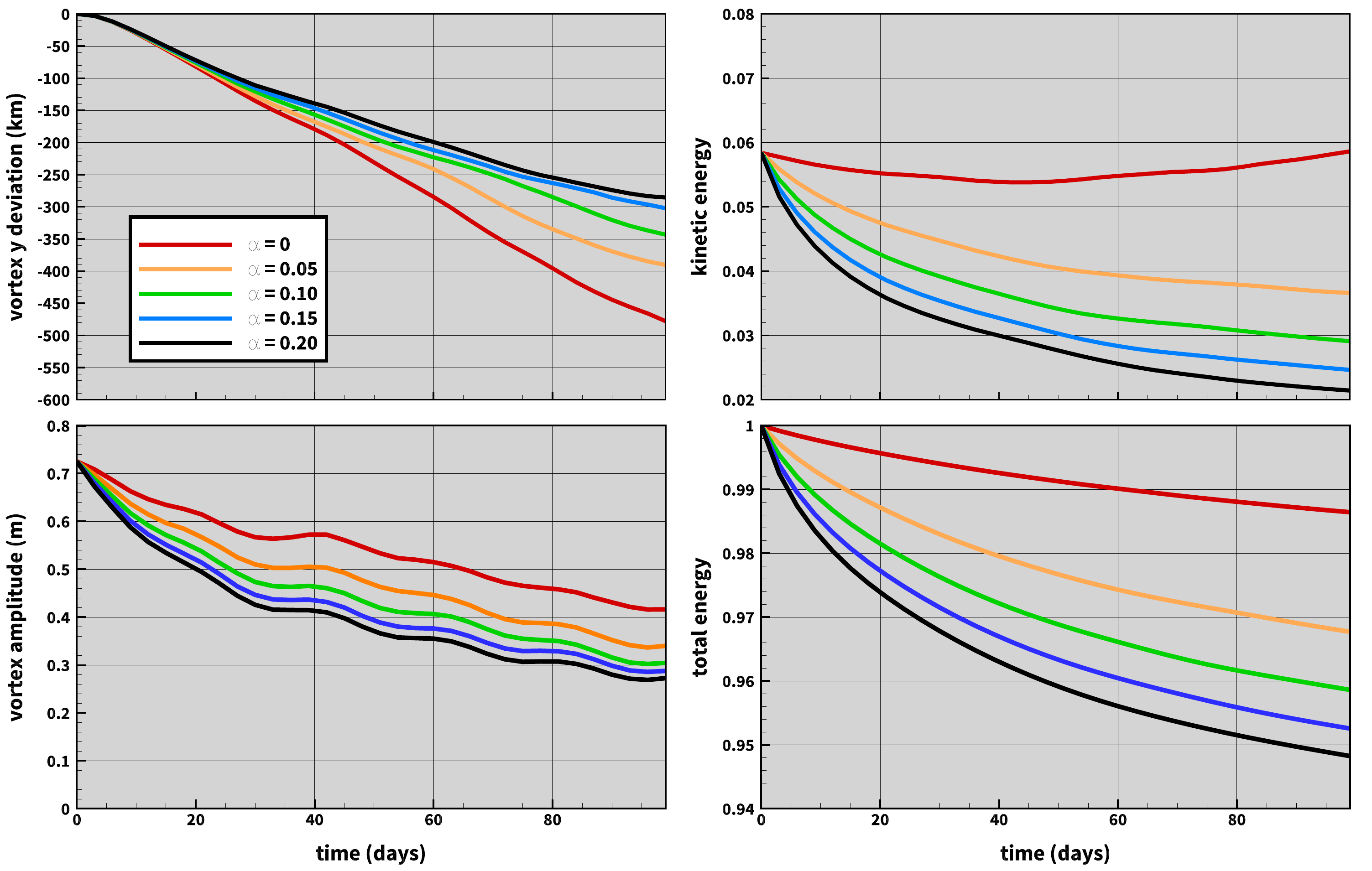} 
\par\end{centering}
\caption{Time evolution of some revelant diagnostic quantities for the baroclinic
vortex test case using the present second-order scheme. Simulations
stabilization constants are $\gamma=0.2$ and some varying $\alpha$
with the second-order scheme and a fixed mesh size $\Delta x=10\,\mathrm{km}$.\label{fig:vortex-alpha-influence}}
\end{figure}

\section{Conclusion}

In this paper we have introduced an explicit numerical scheme on unstructured
meshes for the two-dimensional multilayer shallow water system with
density stratification. The main characteristic of the numerical approach
stands in its ability to deal with the non conservative pressure\textit{
}term with strong stability properties, and without the need of evaluating
the system eigenvalues. The formalism is particularly adapted to deal
with well-balancing issues, and a positivity result is also exhibited.
Assuming a classical explicit CFL condition, the dissipation of the
mechanical energy has been demonstrated under sufficient inequality
conditions on a pair of stabilization constants, as well as the consistency
with respect to the low Froude regimes at different time scales, which
stand for two fundamental and challenging criteria in the context
of large-scale oceanic or estuary flows. The non linear study has
been complemented through a complete linear stability analysis for
the the first and second-order schemes, for the one and two-dimensional
problems. In particular, it has been observed that the calibration
of the stabilization constants could be significantly relaxed at second-order
with the use of an appropriate time scheme. The practical consequences
are undeniable since it allows to considerably limit the diffusive
losses in the numerical simulations. In view of these results, a more
advanced high order space and time analysis is currently in progress,
including an eventual extension to a general finite elements frame
as we believe that the proposed numerical method gives a solid framework
to derive high-order explicit schemes. As it is still confirmed by
our numerical experiments, these stability properties make the approach
particularly well suited to large-scale oceanic circulation, and competitive
with other softwares developed within the oceanographic community. 

In addition to high order space and time extensions, many other perspectives
are driven by the present developments. First, the explicit scheme's
efficiency must be compared with its semi-implicit version \citep{Parisot2015},
which accepts bigger time steps, but at the price of a more important
computational cost (due to the resolution of a nonlinear system) and
the difficulty to derive high order time and space extensions having
the same strong stability properties. Thus, to date, the time benefits
brought by the semi-implicit version are not so clear, especially
since the use of bigger time steps tends to rapidly deteriorate the
scheme's accuracy. Appropriate high order schemes need to be used
in order to limit this drawback. The global stability analysis of
the numerical scheme taking into account the Coriolis force with or
without time stepping also needs to be performed. In addition, and
in view of very promising preliminary results, the present approach
is currently oriented toward other crucial operational contexts such
as river flows or coastal applications. These works need futher investigations
to handle hydraulic jumps or wetting and drying areas, with the management
of disappearing layers or emerging topographies. Also, in light of
the numerical results, it appears crucial to study the possibility
of computing the two adimensional stabilization constants locally,
according notably to the discrete solution local regularity. This
flexibility may substantially improve the overall accuracy of the
method.

\section*{Acknowledgements}

This work was granted access to the HPC resources of CALMIP supercomputing
center under the allocation 2016-P1234.

\section{Appendix\label{sec:Appendix}}

The first part of this Appendix presents the main steps leading to
the control of the total energy production (proof of Theorem \ref{dissipation}).
We then give an interpretation of the numerical model in terms convex
combination of 1d schemes, as mentioned in Section \ref{sec:Asymptotic-regimes}.
Some technical aspects for implementation purposes are also proposed,
including the MUSCL reconstruction scheme (supplemented by a formal
extension of energy dissipation results), treatment of Coriolis force
and the fully explicit formula used for the time step selection.

\subsection{Stability results for the first-order scheme\label{Proofs}}

\subsubsection{Kinetic energy\label{App-propK}}

We begin by the kinetic energy, and set: 
\[
{\mathcal{K}_{K,i}^{n}}=\frac{1}{2}{H_{K,i}^{n}}\norm{\textbf{u}_{K,i}^{n}}^{2}\,.
\]
We have the following result:

\begin{proposition}\label{propK}\textit{Estimation of the kinetic
energy production}

\[
{\mathcal{K}_{K,i}^{n+1}}-{\mathcal{K}_{K,i}^{n}}+\dfrac{\Delta t}{m_{K}}{\sum_{e\in\partial K}}\big({\mathcal{G}_{\mathcal{K},e,i}^{n}}.{\textbf{n}_{e,K}}\big){m_{e}}+\mathcal{Q}_{\mathcal{K},K,i}\leq\mathcal{R}_{\mathcal{K},K,i}+\mathcal{H}_{\mathcal{K},K,i}-\mathcal{A}_{\mathcal{K},K,i}+\tilde{\mathcal{A}}_{\mathcal{K},K,i}\,,
\]
with
\begin{align}
{\mathcal{G}_{\mathcal{K},e,i}^{n}}.{\textbf{n}_{e,K}} & =\frac{1}{2}\norm{\textbf{u}_{K,i}^{n}}^{2}{\big(\mathcal{F}_{e,i}^{n}.\textbf{n}_{e,K}\big)^{+}}+\frac{1}{2}\norm{\textbf{u}_{K_{e},i}^{n}}^{2}{\big(\mathcal{F}_{e,i}^{n}.\textbf{n}_{e,K}\big)^{-}}\,,\nonumber \\
\mathcal{Q}_{\mathcal{K},K,i} & =\dfrac{\Delta t}{m_{K}}{H_{K,i}^{n}}\textbf{u}_{K,i}^{n}.{\sum_{e\in\partial K}}\dfrac{{\boldsymbol{\delta}\Phi_{e,i}^{n}}}{\varepsilon^{2}}{m_{e}}\,,\label{Qkk}\\
\mathcal{H}_{\mathcal{K},K,i} & =\dfrac{\Delta t}{m_{K}}{\sum_{e\in\partial K}}{\overline{H\textbf{u}}_{e,i}^{n}}.\dfrac{{\Lambda_{e,i}^{n}}}{\varepsilon^{2}}\textbf{n}_{e,K}{m_{e}}\,,\nonumber \\
\mathcal{A}_{\mathcal{K},K,i} & =\dfrac{\Delta t}{m_{K}}{\sum_{e\in\partial K}}\dfrac{{\Lambda_{e,i}^{n}}}{\varepsilon^{2}}\dfrac{1}{2}({H_{K_{e},i}^{n}}{\textbf{u}_{K_{e},i}^{n}}-{H_{K,i}^{n}}\textbf{u}_{K,i}^{n}).\textbf{n}_{e,K}{m_{e}}\,,\label{Akk}\\
\tilde{\mathcal{A}}_{\mathcal{K},K,i} & =2\left(\dfrac{\Delta t}{m_{K}}\right)^{2}\dfrac{({H_{K,i}^{n}})^{2}}{{H_{K,i}^{n+1}}}{m_{\partial K}}{\sum_{e\in\partial K}}\left(\dfrac{{\Lambda_{e,i}^{n}}}{\varepsilon^{2}}\right)^{2}{m_{e}}\,,\label{tAkk}\\
\mathcal{R}_{\mathcal{K},K,i} & =\left(\dfrac{\Delta t}{m_{K}}\right)^{2}\dfrac{({H_{K,i}^{n}})^{2}}{{H_{K,i}^{n+1}}}{m_{\partial K}}{\sum_{e\in\partial K}}\norm{\dfrac{{\boldsymbol{\delta}\Phi_{e,i}^{n}}}{\varepsilon^{2}}}^{2}{m_{e}}\,.
\label{Rkk}
\end{align}
\end{proposition}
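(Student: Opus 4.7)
The plan is to derive a local energy identity by algebraic manipulation of the scheme, then absorb non-sign-definite remainders through Young's inequalities calibrated against the robustness estimate (\ref{Robustesse2}). Using the shorthand $(H,\mathbf{u})=(H_{K,i}^{n},\mathbf{u}_{K,i}^{n})$ and $(H',\mathbf{v})=(H_{K,i}^{n+1},\mathbf{u}_{K,i}^{n+1})$, I would start from
\[
2(\mathcal{K}_{K,i}^{n+1}-\mathcal{K}_{K,i}^{n})=2H'\mathbf{u}\cdot(\mathbf{v}-\mathbf{u})+H'\|\mathbf{v}-\mathbf{u}\|^{2}+(H'-H)\|\mathbf{u}\|^{2},
\]
and substitute the mass equation (\ref{mass}) into the last term and the velocity update (\ref{u1}), rewritten as $H'(\mathbf{v}-\mathbf{u})=X+Y$ with
\[
X=-\frac{\Delta t}{m_{K}}\sum_{e\in\partial K}(\mathbf{u}_{K_{e},i}^{n}-\mathbf{u}_{K,i}^{n})(\mathcal{F}_{e,i}^{n}\cdot\mathbf{n}_{e,K})^{-}m_{e},\qquad Y=-\frac{\Delta t}{m_{K}}H_{K,i}^{n}\sum_{e\in\partial K}\frac{\Phi_{e,i}^{n,\ast}}{\varepsilon^{2}}\mathbf{n}_{e,K}m_{e},
\]
into the second term.

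The second step handles the cross term $H'\mathbf{u}\cdot(\mathbf{v}-\mathbf{u})$. The polarization identity $\mathbf{u}\cdot(\mathbf{u}_{K_{e}}-\mathbf{u})=\tfrac{1}{2}(\|\mathbf{u}_{K_{e}}\|^{2}-\|\mathbf{u}\|^{2})-\tfrac{1}{2}\|\mathbf{u}_{K_{e}}-\mathbf{u}\|^{2}$ applied to $X$, combined with $(H'-H)\|\mathbf{u}\|^{2}$ and the split $(\mathcal{F}_{e}\cdot\mathbf{n})=(\mathcal{F}_{e}\cdot\mathbf{n})^{+}+(\mathcal{F}_{e}\cdot\mathbf{n})^{-}$, reconstructs exactly the upwind kinetic flux $\mathcal{G}_{\mathcal{K},e,i}^{n}$ and leaves an excess term $\frac{\Delta t}{m_{K}}\sum_{e}\tfrac{1}{2}\|\mathbf{u}_{K_{e}}-\mathbf{u}\|^{2}(\mathcal{F}_{e}\cdot\mathbf{n})^{-}m_{e}\leq 0$ on the right. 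For the pressure contribution in $Y$, relation (\ref{RelPhi}) splits $\sum_{e}\Phi_{e}^{\ast}\mathbf{n}_{e,K}m_{e}$ into the centered gradient $\sum_{e}\boldsymbol{\delta}\Phi_{e}m_{e}$, which produces $-\mathcal{Q}_{\mathcal{K},K,i}$, and the correction $-\sum_{e}\Lambda_{e}\mathbf{n}_{e,K}m_{e}$; decomposing $H_{K}\mathbf{u}_{K}=\overline{H\mathbf{u}}_{e}-\tfrac{1}{2}(H_{K_{e}}\mathbf{u}_{K_{e}}-H_{K}\mathbf{u}_{K})$ in the scalar product with the correction yields precisely $\mathcal{H}_{\mathcal{K},K,i}-\mathcal{A}_{\mathcal{K},K,i}$.

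The third step estimates the quadratic residual $\tfrac{1}{2}H'\|\mathbf{v}-\mathbf{u}\|^{2}=\|X+Y\|^{2}/(2H')$. I would split $Y=Y_{1}+Y_{2}$ into its $\boldsymbol{\delta}\Phi$ and $\Lambda$ contributions, then apply Young's inequality twice: $\|X+Y_{1}+Y_{2}\|^{2}\leq 2\|Y_{1}\|^{2}+2\|X+Y_{2}\|^{2}\leq 2\|Y_{1}\|^{2}+4\|X\|^{2}+4\|Y_{2}\|^{2}$. Cauchy-Schwarz on the edge sums with the weight $m_{e}$ (summing to $m_{\partial K}$) gives $\|Y_{1}\|^{2}/H'\leq\mathcal{R}_{\mathcal{K},K,i}$ and $2\|Y_{2}\|^{2}/H'\leq\tilde{\mathcal{A}}_{\mathcal{K},K,i}$ exactly with the coefficients of (\ref{Rkk}) and (\ref{tAkk}). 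For $X$, Cauchy-Schwarz yields $\|X\|^{2}\leq(\Delta t/m_{K})\bigl[\sum_{e}|(\mathcal{F}_{e}\cdot\mathbf{n})^{-}|m_{e}\bigr]\cdot\Delta t(\cdots)$, and the robustness estimate (\ref{Robustesse2}) (see Remark \ref{Rob2}) bounds the bracket by $\beta H'$, so
\[
2\|X\|^{2}/H'\leq -2\beta\frac{\Delta t}{m_{K}}\sum_{e\in\partial K}\|\mathbf{u}_{K_{e}}-\mathbf{u}\|^{2}(\mathcal{F}_{e}\cdot\mathbf{n})^{-}m_{e}.
\]
Adding the $\leq 0$ excess from step two then produces a net coefficient $(\tfrac{1}{2}-2\beta)\|\mathbf{u}_{K_{e}}-\mathbf{u}\|^{2}(\mathcal{F}_{e}\cdot\mathbf{n})^{-}$, which is $\leq 0$ iff $\beta\leq 1/4$, consistent with the choice already announced in Remark \ref{Rob2}. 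The main technical obstacle is the precise calibration of these Young's inequality weights: the decomposition must route factor $1$ onto $\mathcal{R}_{\mathcal{K}}$ and factor $1$ onto $\tilde{\mathcal{A}}_{\mathcal{K}}$ while simultaneously producing exactly the coefficient $2\beta$ on the advective quadratic excess so that the latter is absorbed by the free polarization remainder under the robustness condition $\beta=1/4$; any other split would either leave a positive residual or break the coefficients prescribed in the statement.
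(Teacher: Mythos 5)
Your proposal is correct and follows essentially the same route as the paper's proof: the same discrete energy identity (the paper phrases it via $(\mathbf{a}-\mathbf{b})\cdot\mathbf{b}=\tfrac12\|\mathbf{a}\|^2-\tfrac12\|\mathbf{b}\|^2-\tfrac12\|\mathbf{a}-\mathbf{b}\|^2$ combined with the mass update), the same splitting of the corrected potential via (\ref{RelPhi}) and of $H_K\mathbf{u}_K$ around $\overline{H\mathbf{u}}_e$ to produce $\mathcal{Q}$, $\mathcal{H}$, $\mathcal{A}$, and the same three-way weighting of the quadratic residual (your two successive Young's inequalities are exactly the paper's Jensen step with weights $1/2,1/4,1/4$), followed by Cauchy--Schwarz and the robustness bound (\ref{Robustesse2}) with $\beta=1/4$ to absorb the advective excess. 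The coefficient bookkeeping you describe matches (\ref{Rkk}), (\ref{tAkk}) and the bracket in (\ref{SK}) exactly.
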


\begin{proof}We drop the subscript \textit{``i''} for a better
readability. We first use the equation on $\textbf{u}$ (\ref{u1}):
\[
\begin{split}{H_{K}^{n+1}}(\textbf{u}_{K}^{n+1}-\textbf{u}_{K}^{n}).\textbf{u}_{K}^{n}= & -\dfrac{\Delta t}{m_{K}}{\sum_{e\in\partial K}}({\textbf{u}_{K_{e}}^{n}}-\textbf{u}_{K}^{n}).\textbf{u}_{K}^{n}{\big(\mathcal{F}_{e}^{n}.\textbf{n}_{e,K}\big)^{-}}{m_{e}}\\
 & -\dfrac{\Delta t}{m_{K}}{H_{K}^{n}}\textbf{u}_{K}^{n}.{\sum_{e\in\partial K}}\dfrac{{\Phi_{e}^{n,\ast}}}{\varepsilon^{2}}\textbf{n}_{e,K}{m_{e}}\,.
\end{split}
\]
Then, using the relation $(\textbf{a}-\textbf{b}).\textbf{b}=\dfrac{1}{2}\norm{\textbf{a}}^{2}-\dfrac{1}{2}\norm{\textbf{b}}^{2}-\dfrac{1}{2}\norm{\textbf{a}-\textbf{b}}^{2}$
: 
\[
\begin{split}{H_{K}^{n+1}}\Big(\dfrac{1}{2}\norm{\textbf{u}_{K}^{n+1}}^{2} & -\dfrac{1}{2}\norm{\textbf{u}_{K}^{n}}^{2}-\dfrac{1}{2}\norm{\textbf{u}_{K}^{n+1}-\textbf{u}_{K}^{n}}^{2}\Big)\\
= & -\dfrac{\Delta t}{m_{K}}{\sum_{e\in\partial K}}\Big(\dfrac{1}{2}\norm{\textbf{u}_{K_{e}}^{n}}^{2}-\dfrac{1}{2}\norm{\textbf{u}_{K}^{n}}^{2}-\dfrac{1}{2}\norm{{\textbf{u}_{K_{e}}^{n}}-\textbf{u}_{K}^{n}}^{2}\Big){\big(\mathcal{F}_{e}^{n}.\textbf{n}_{e,K}\big)^{-}}{m_{e}}\\
 & -\dfrac{\Delta t}{m_{K}}{H_{K}^{n}}\textbf{u}_{K}^{n}.{\sum_{e\in\partial K}}\dfrac{{\Phi_{e}^{n,\ast}}}{\varepsilon^{2}}\textbf{n}_{e,K}{m_{e}}\,.
\end{split}
\]
previous equality and invoking the mass equation (\ref{mass}), we
have:
\begin{equation}
\begin{split}{\tilde{\mathcal{K}}_{K}^{n+1}}-{\tilde{\mathcal{K}}_{K}^{n}}= & -\dfrac{\Delta t}{m_{K}}{\sum_{e\in\partial K}}\Big(\frac{1}{2}\norm{\textbf{u}_{K}^{n}}^{2}{\big(\mathcal{F}_{e}^{n}.\textbf{n}_{e,K}\big)^{+}}+\frac{1}{2}\norm{\textbf{u}_{K_{e}}^{n}}^{2}{\big(\mathcal{F}_{e}^{n}.\textbf{n}_{e,K}\big)^{-}}\Big){m_{e}}\\
 & +\dfrac{1}{2}{H_{K}^{n+1}}\norm{\textbf{u}_{K}^{n+1}-\textbf{u}_{K}^{n}}^{2}+\dfrac{\Delta t}{m_{K}}{\sum_{e\in\partial K}}\dfrac{1}{2}\norm{{\textbf{u}_{K_{e}}^{n}}-\textbf{u}_{K}^{n}}^{2}{\big(\mathcal{F}_{e}^{n}.\textbf{n}_{e,K}\big)^{-}}{m_{e}}\\
 & -\dfrac{\Delta t}{m_{K}}{H_{K}^{n}}\textbf{u}_{K}^{n}.{\sum_{e\in\partial K}}\dfrac{{\Phi_{e}^{n,\ast}}}{\varepsilon^{2}}\textbf{n}_{e,K}{m_{e}}\,.
\end{split}
\label{Bilan_K1}
\end{equation}
We now denote: 
\[
S_{K}=\dfrac{1}{2}{H_{K}^{n+1}}\norm{\textbf{u}_{K}^{n+1}-\textbf{u}_{K}^{n}}^{2}+\dfrac{\Delta t}{m_{K}}{\sum_{e\in\partial K}}\dfrac{1}{2}\norm{{\textbf{u}_{K_{e}}^{n}}-\textbf{u}_{K}^{n}}^{2}{\big(\mathcal{F}_{e}^{n}.\textbf{n}_{e,K}\big)^{-}}{m_{e}}\,,
\]
focus on the first term of $S_{K}$. We first use Jensen's inequality
with the weights $1/4,1/2,1/4$ to obtain a control of the form:
\[
\begin{split}\dfrac{1}{2}{H_{K}^{n+1}}\norm{\textbf{u}_{K}^{n+1}-\textbf{u}_{K}^{n}}^{2}\leq & \quad\dfrac{\left({H_{K}^{n}}\right)^{2}}{{H_{K}^{n+1}}}\left(\dfrac{\Delta t}{m_{K}}\right)^{2}\norm{{\sum_{e\in\partial K}}\dfrac{{\boldsymbol{\delta}\Phi_{e}^{n}}}{\varepsilon^{2}}\textbf{n}_{e,K}{m_{e}}}^{2}\\
 & +2\dfrac{\left({H_{K}^{n}}\right)^{2}}{{H_{K}^{n+1}}}\left(\dfrac{\Delta t}{m_{K}}\right)^{2}\norm{{\sum_{e\in\partial K}}\dfrac{{\Lambda_{e}^{n}}}{\varepsilon^{2}}\textbf{n}_{e,K}{m_{e}}}^{2}\\
 & +\dfrac{2}{{H_{K}^{n+1}}}\left(\dfrac{\Delta t}{m_{K}}\right)^{2}\norm{{\sum_{e\in\partial K}}\left({\textbf{u}_{K_{e}}^{n}}-\textbf{u}_{K}^{n}\right){\big(\mathcal{F}_{e}^{n}.\textbf{n}_{e,K}\big)^{-}}{m_{e}}}^{2}\,.
\end{split}
\]
We now carry on a separate analysis of each of the resulting terms.
Using again Jensen's inequality:
\begin{equation}
\begin{split}\norm{{\sum_{e\in\partial K}}\dfrac{{\overline{\Phi}_{e}^{n}}}{\varepsilon^{2}}\textbf{n}_{e,K}{m_{e}}}^{2} & \leq{m_{\partial K}}\left({\sum_{e\in\partial K}}\norm{\dfrac{{\boldsymbol{\delta}\Phi_{e}^{n}}}{\varepsilon^{2}}}^{2}{m_{e}}\right)\,,\\
\norm{{\sum_{e\in\partial K}}\dfrac{{\Lambda_{e}^{n}}}{\varepsilon^{2}}\textbf{n}_{e,K}{m_{e}}}^{2} & \leq{m_{\partial K}}\left({\sum_{e\in\partial K}}\left(\dfrac{{\Lambda_{e}^{n}}}{\varepsilon^{2}}\right)^{2}{m_{e}}\right)\,.
\end{split}
\label{JensenK}
\end{equation}
On the other hand, the Cauchy-Schwarz inequality gives: 
\[
\begin{split}\norm{{\sum_{e\in\partial K}}({\textbf{u}_{K_{e}}^{n}}-\textbf{u}_{K}^{n}){\big(\mathcal{F}_{e}^{n}.\textbf{n}_{e,K}\big)^{-}}{m_{e}}}^{2}\leq & \Big({\sum_{e\in\partial K}}\norm{{\textbf{u}_{K_{e}}^{n}}-\textbf{u}_{K}^{n}}^{2}{\big(\mathcal{F}_{e}^{n}.\textbf{n}_{e,K}\big)^{-}}{m_{e}}\Big)\Big({\sum_{e\in\partial K}}{\big(\mathcal{F}_{e}^{n}.\textbf{n}_{e,K}\big)^{-}}{m_{e}}\Big)\,.\end{split}
\]
Thus: 
\begin{equation}
\begin{split}S_{K}\leq & \left(\dfrac{\Delta t}{m_{K}}\right)^{2}\dfrac{({H_{K}^{n}})^{2}}{{H_{K}^{n+1}}}{m_{\partial K}}\left({\sum_{e\in\partial K}}\norm{\dfrac{{\boldsymbol{\delta}\Phi_{e}^{n}}}{\varepsilon^{2}}}^{2}{m_{e}}\right)+2\left(\dfrac{\Delta t}{m_{K}}\right)^{2}\dfrac{({H_{K}^{n}})^{2}}{{H_{K}^{n+1}}}{m_{\partial K}}\left({\sum_{e\in\partial K}}\left(\dfrac{{\Lambda_{e}^{n}}}{\varepsilon^{2}}\right)^{2}{m_{e}}\right)\\
+ & \dfrac{1}{2}\dfrac{\Delta t}{m_{K}}{\sum_{e\in\partial K}}\norm{{\textbf{u}_{K_{e}}^{n}}-\textbf{u}_{K}^{n}}^{2}{\big(\mathcal{F}_{e}^{n}.\textbf{n}_{e,K}\big)^{-}}{m_{e}}\times\Big[1-4\dfrac{\Delta t}{m_{K}}{\sum_{e\in\partial K}}\frac{-{\big(\mathcal{F}_{e}^{n}.\textbf{n}_{e,K}\big)^{-}}}{{H_{K}^{n+1}}}{m_{e}}\Big]\,.
\end{split}
\label{SK}
\end{equation}
The third term being assumed negative according to Remark \ref{Rob2}
(condition (\ref{Robustesse2}) with $\beta=1/4$), this yields the
remainder $\mathcal{R}_{\mathcal{K},K,i}$ (\ref{Rkk}) and the contribution
$\tilde{\mathcal{A}}_{\mathcal{K},K,i}$ (\ref{tAkk}). Finally, using
again (\ref{RelPhi}) the term involving the potential forces in (\ref{Bilan_K1})
is rewritten as: 
\[
\begin{split}\dfrac{\Delta t}{m_{K}}{H_{K}^{n}}\textbf{u}_{K}^{n}.{\sum_{e\in\partial K}}\dfrac{{\Phi_{e}^{n,\ast}}}{\varepsilon^{2}}\textbf{n}_{e,K}{m_{e}} & =\dfrac{\Delta t}{m_{K}}{H_{K}^{n}}\textbf{u}_{K}^{n}.{\sum_{e\in\partial K}}\dfrac{{\boldsymbol{\delta}\Phi_{e}^{n}}}{\varepsilon^{2}}{m_{e}}-\dfrac{\Delta t}{m_{K}}{H_{K}^{n}}\textbf{u}_{K}^{n}.{\sum_{e\in\partial K}}\dfrac{{\Lambda_{e}^{n}}}{\varepsilon^{2}}\textbf{n}_{e,K}{m_{e}}\,.\end{split}
\]
We use the relation ${H_{K}^{n}}\textbf{u}_{K}^{n}={\overline{H\textbf{u}}_{e}^{n}}+\dfrac{1}{2}({H_{K}^{n}}\textbf{u}_{K}^{n}-{H_{K_{e}}^{n}}{\textbf{u}_{K_{e}}^{n}})$
on the second member of the right hand side in the previous equality,
to finally obtain $\mathcal{Q}_{\mathcal{K},K,i}$, $\mathcal{H}_{\mathcal{K},K,i}$
and $\mathcal{A}_{\mathcal{K},K,i}$.\end{proof}

\subsubsection{Potential energy\label{App-propE}}

We now turn to the potential part, and denote ${\mathcal{E}_{K}^{n}}$
the potential energy on the cell $K$ at time $n$. We have the following
result:

\begin{proposition}\label{propE} \textit{Estimation of the potential
energy production:}

\[
{\mathcal{E}_{K}^{n+1}}-{\mathcal{E}_{K}^{n}}+\dfrac{\Delta t}{m_{K}}{\sum_{i=1}^{L}}{\sum_{e\in\partial K}}\big({\mathcal{G}_{\mathcal{E},e,i}^{n}}.{\textbf{n}_{e,K}}\big){m_{e}}-\mathcal{Q}_{\mathcal{E},K}\leq-\mathcal{R}_{\mathcal{E},K}+\mathcal{H}_{\mathcal{E},K}+\mathcal{A}_{\mathcal{E},K}+\tilde{\mathcal{R}}_{\mathcal{E},K}\,,
\]
with 
\begin{align}
{\mathcal{G}_{\mathcal{E},e,i}^{n}}.{\textbf{n}_{e,K}} & ={\overline{\Phi}_{e,i}^{n}}\mathcal{F}_{e,i}^{n}.\textbf{n}_{e,K}\,,\nonumber \\
\mathcal{Q}_{\mathcal{E},K} & =\dfrac{\Delta t}{m_{K}}{\sum_{i=1}^{L}}{H_{K,i}^{n}}\textbf{u}_{K,i}^{n}.{\sum_{e\in\partial K}}{\boldsymbol{\delta}\Phi_{e,i}^{n}}{m_{e}}\,,\label{Qek}\\
\mathcal{R}_{\mathcal{E},K} & =\dfrac{\Delta t}{m_{K}}{\sum_{i=1}^{L}}{\sum_{e\in\partial K}}\Pi_{e,i}^{n}.{\boldsymbol{\delta}\Phi_{e,i}^{n}}{m_{e}}\,,\label{Rek}\\
\mathcal{H}_{\mathcal{E},K} & =\dfrac{\Delta t}{m_{K}}{\sum_{i=1}^{L}}{\sum_{e\in\partial K}}\left(\dfrac{{H_{K_{e},i}^{n}}{\textbf{u}_{K_{e},i}^{n}}-{H_{K,i}^{n}}\textbf{u}_{K,i}^{n}}{2}\right).{\boldsymbol{\delta}\Phi_{e,i}^{n}}{m_{e}}\,,
\nonumber 
\end{align}
and the Taylor's residuals: 
\begin{align}
\tilde{\mathcal{R}}_{\mathcal{E},K} & ={C_{{\boldsymbol{\mathcal{H}}}}}\left(\dfrac{\Delta t}{m_{K}}\right)^{2}{m_{\partial K}}{\sum_{i=1}^{L}}{\sum_{e\in\partial K}}(\Pi_{e,i}^{n}.\textbf{n}_{e,K})^{2}{m_{e}}\,,\label{tRek}\\
\mathcal{A}_{\mathcal{E},K} & ={C_{{\boldsymbol{\mathcal{H}}}}}\left(\dfrac{\Delta t}{m_{K}}\right)^{2}{m_{\partial K}}{\sum_{i=1}^{L}}{\sum_{e\in\partial K}}({{\delta}(H\textbf{u})_{e,i}^{n}})^{2}{m_{e}}\,.\label{Aek}
\end{align}
\end{proposition}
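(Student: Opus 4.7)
The plan is to use that, under Hypothesis~\ref{Regularity}, the potential energy $\mathcal{E}$ is a quadratic function of the mass vector $\boldsymbol{H}$, so an exact second-order Taylor expansion holds around $\boldsymbol{H}_K^n$. Writing $\Delta H_{K,i} := H_{K,i}^{n+1} - H_{K,i}^n$, using $\partial_{H_i}\mathcal{E}=\Phi_i$ and the symmetric Hessian in~(\ref{Hessian}) yields the identity
\[
\mathcal{E}_K^{n+1} - \mathcal{E}_K^n \;=\; \sum_{i=1}^{L} \Phi_{K,i}^n\,\Delta H_{K,i} \;+\; \tfrac{1}{2}\sum_{i,j=1}^{L} \boldsymbol{\mathcal{H}}_{ij}\,\Delta H_{K,i}\,\Delta H_{K,j}\,.
\]
I would then treat the linear and the quadratic contributions separately. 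The recurring algebraic ingredients are the centred/jump splitting $\Phi_{K,i}^n = \overline{\Phi}_{e,i}^n - \boldsymbol{\delta}\Phi_{e,i}^n.\textbf{n}_{e,K}$, the analogous splitting of $\overline{H\textbf{u}}_{e,i}^n$ around the cell value $H_{K,i}^n\textbf{u}_{K,i}^n$, and the closure identity $\sum_{e\in\partial K}\textbf{n}_{e,K}\,m_e=0$.

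For the linear part I would inject the discrete mass balance~(\ref{mass}) to turn $\Delta H_{K,i}$ into a sum of fluxes and insert the above splitting of $\Phi_{K,i}^n$. The contribution of $\overline{\Phi}_{e,i}^n$ is conservative and produces the energy flux $\mathcal{G}_{\mathcal{E},e,i}^n.\textbf{n}_{e,K} = \overline{\Phi}_{e,i}^n\,\mathcal{F}_{e,i}^n.\textbf{n}_{e,K}$. The contribution of $\boldsymbol{\delta}\Phi_{e,i}^n$, using its collinearity with $\textbf{n}_{e,K}$, collapses to $\frac{\Delta t}{m_K}\sum_{i,e}\boldsymbol{\delta}\Phi_{e,i}^n.\mathcal{F}_{e,i}^n\,m_e$. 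Substituting $\mathcal{F}_{e,i}^n = \overline{H\textbf{u}}_{e,i}^n - \Pi_{e,i}^n$ and splitting the centred momentum produces, in order: the source $\mathcal{Q}_{\mathcal{E},K}$ from the cell-centred part, the fluctuation term $\mathcal{H}_{\mathcal{E},K}$, and, via the alignment between $\Pi_{e,i}^n$ and $\boldsymbol{\delta}\Phi_{e,i}^n$ built into~(\ref{pien}), the non-positive dissipation $-\mathcal{R}_{\mathcal{E},K}$.

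For the quadratic remainder I would first use $|||\boldsymbol{\mathcal{H}}|||_{L^2}\leq C_{\boldsymbol{\mathcal{H}}}$ from~(\ref{HL2}) to bound it by $\tfrac{1}{2}C_{\boldsymbol{\mathcal{H}}}\sum_i(\Delta H_{K,i})^2$, then replace $\Delta H_{K,i}$ via~(\ref{mass}). The key algebraic step is the rewriting
\[
\sum_{e\in\partial K}\mathcal{F}_{e,i}^n.\textbf{n}_{e,K}\,m_e \;=\; \sum_{e\in\partial K}\delta(H\textbf{u})_{e,i}^n\,m_e \;-\; \sum_{e\in\partial K}\Pi_{e,i}^n.\textbf{n}_{e,K}\,m_e\,,
\]
where $\sum_e\textbf{n}_{e,K}\,m_e=0$ kills the cell-centred part of $\overline{H\textbf{u}}_{e,i}^n$ and leaves only the jump $\delta(H\textbf{u})_{e,i}^n$. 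Applying $(a+b)^2\leq 2(a^2+b^2)$ followed by the Jensen inequality $(\sum_e a_e\,m_e)^2 \leq m_{\partial K}\sum_e a_e^2\,m_e$ on each piece then delivers exactly $\mathcal{A}_{\mathcal{E},K}+\tilde{\mathcal{R}}_{\mathcal{E},K}$. The main obstacle is essentially algebraic bookkeeping: keeping track of signs and of the factors of $\tfrac{1}{2}$ coming from $\overline{\cdot}$ and $\boldsymbol{\delta}\cdot$, and using systematically the collinearity identities that turn $(\boldsymbol{\delta}\Phi_{e,i}^n.\textbf{n}_{e,K})(\mathcal{F}_{e,i}^n.\textbf{n}_{e,K})$ into $\boldsymbol{\delta}\Phi_{e,i}^n.\mathcal{F}_{e,i}^n$. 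Contrary to Proposition~\ref{propK}, no CFL-type input~(\ref{Robustesse2}) is needed, since the potential-energy balance is an exact identity up to a quadratic Taylor remainder controlled by a single Cauchy--Schwarz plus Jensen argument.
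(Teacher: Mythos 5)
Your proposal is correct and follows essentially the same route as the paper: a second-order Taylor expansion of $\mathcal{E}$ in $\boldsymbol{H}$, the splitting $\Phi_{K,i}^{n}=\overline{\Phi}_{e,i}^{n}-\boldsymbol{\delta}\Phi_{e,i}^{n}.\textbf{n}_{e,K}$ combined with $\mathcal{F}_{e,i}^{n}=\overline{H\textbf{u}}_{e,i}^{n}-\Pi_{e,i}^{n}$ to produce $\mathcal{G}_{\mathcal{E}}$, $\mathcal{Q}_{\mathcal{E}}$, $\mathcal{H}_{\mathcal{E}}$ and $-\mathcal{R}_{\mathcal{E}}$, and then the mass equation with $\sum_{e\in\partial K}\textbf{n}_{e,K}m_{e}=0$ plus $(a+b)^{2}\leq2(a^{2}+b^{2})$ and Jensen to bound the quadratic remainder by $\mathcal{A}_{\mathcal{E},K}+\tilde{\mathcal{R}}_{\mathcal{E},K}$. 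The only cosmetic difference is that the paper writes the Taylor remainder with an intermediate-state Hessian $\boldsymbol{\mathcal{H}}_{ij,K}^{n+s}$ before invoking (\ref{HL2}), whereas you exploit directly that $\mathcal{E}$ is quadratic; both yield the same bound, and you are right that no CFL-type input is needed here.
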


\begin{proof}Using Taylor's formula between time steps $n$ and $n+1$,
we have for a certain $s\in\left[0,1\right]$: 
\[
{\mathcal{E}_{K}^{n+1}}-{\mathcal{E}_{K}^{n}}=-\dfrac{\Delta t}{m_{K}}{\sum_{i=1}^{L}}{\sum_{e\in\partial K}}{\Phi_{K,i}^{n}}\mathcal{F}_{e,i}^{n}.\textbf{n}_{e,K}{m_{e}}+\dfrac{1}{2}{\sum_{i=1}^{L}}{\sum_{j=1}^{L}}\big({H_{K,i}^{n+1}}-{H_{K,i}^{n}}\big){\boldsymbol{\mathcal{H}}}_{ij,K}^{n+s}\big(H_{K,j}^{n+1}-H_{K,j}^{n}\big)\,,
\]
where ${\boldsymbol{\mathcal{H}}}_{ij,K}^{n+s}={\boldsymbol{\mathcal{H}}}_{ij}\left(s{\boldsymbol{H}}_{K}^{n+1}+(1-s){\boldsymbol{H}}_{K}^{n},\mathbf{x}_{K}\right)$,
where we recall that $\boldsymbol{{\boldsymbol{H}}}_{K}^{n}=\leftidx{^{t}}{\left(H_{K,1}^{n},\cdots,H_{K,L}^{n}\right)}$.
Then we call the following decomposition: 

\[
\begin{split}{\Phi_{K,i}^{n}}\mathcal{F}_{e,i}^{n}.\textbf{n}_{e,K}{m_{e}} & ={\overline{\Phi}_{e,i}^{n}}\mathcal{F}_{e,i}^{n}.\textbf{n}_{e,K}{m_{e}}+({\Phi_{K,i}^{n}}-{\overline{\Phi}_{e,i}^{n}})\mathcal{F}_{e,i}^{n}.\textbf{n}_{e,K}{m_{e}}\\
 & ={\overline{\Phi}_{e,i}^{n}}\mathcal{F}_{e,i}^{n}.\textbf{n}_{e,K}{m_{e}}-{\overline{H\textbf{u}}_{e,i}^{n}}.\boldsymbol{\delta}\Phi_{e,i}^{n}{m_{e}}+\Pi_{e,i}^{n}.\boldsymbol{\delta}\Phi_{e,i}^{n}{m_{e}}\,.
\end{split}
\]
\\
Expanding ${\overline{H\textbf{u}}_{e,i}^{n}}={H_{K,i}^{n}}\textbf{u}_{K,i}^{n}+\left(\dfrac{{H_{K_{e},i}^{n}}{\textbf{u}_{K_{e},i}^{n}}-{H_{K,i}^{n}}\textbf{u}_{K,i}^{n}}{2}\right)$
we recover the symmetric fluxes ${\mathcal{G}_{\mathcal{E},e,i}^{n}}.{\textbf{n}_{e,K}}$
and the residuals $\mathcal{Q}_{\mathcal{E},K}$, $\mathcal{R}_{\mathcal{E},K}$,
$\mathcal{H}_{\mathcal{E},K}$. Concerning now the Taylor's residual,
we have, according to (\ref{HL2}): 
\begin{equation}
\mathcal{W}_{\mathcal{E},K}:=\dfrac{1}{2}{\sum_{i=1}^{L}}{\sum_{j=1}^{L}}\big({H_{K,i}^{n+1}}-{H_{K,i}^{n}}\big){\boldsymbol{\mathcal{H}}}_{ij,K}^{n+s}\big(H_{K,j}^{n+1}-H_{K,j}^{n}\big)\leq\dfrac{1}{2}{C_{{\boldsymbol{\mathcal{H}}}}}{\sum_{i=1}^{L}}\left({H_{K,i}^{n+1}}-{H_{K,i}^{n}}\right)^{2}\,.\label{WL2}
\end{equation}
We then reformulate (\ref{mass}): 
\[
\begin{split}{H_{K,i}^{n+1}}-{H_{K,i}^{n}} & =-\dfrac{\Delta t}{m_{K}}{\sum_{e\in\partial K}}\mathcal{F}_{e,i}^{n}.\textbf{n}_{e,K}{m_{e}}=-\dfrac{\Delta t}{m_{K}}{\sum_{e\in\partial K}}{\overline{H\textbf{u}}_{e,i}^{n}}.\textbf{n}_{e,K}{m_{e}}+\dfrac{\Delta t}{m_{K}}{\sum_{e\in\partial K}}\Pi_{e,i}^{n}.\textbf{n}_{e,K}{m_{e}}\,,\\
 & =-\dfrac{\Delta t}{m_{K}}{\sum_{e\in\partial K}}{{\delta}(H\textbf{u})_{e,i}^{n}}{m_{e}}+\dfrac{\Delta t}{m_{K}}{\sum_{e\in\partial K}}\Pi_{e,i}^{n}.\textbf{n}_{e,K}{m_{e}}\,,
\end{split}
\]
where we recall that ${{\delta}(H\textbf{u})_{e,i}^{n}}=\dfrac{1}{2}({H_{K_{e},i}^{n}}{\textbf{u}_{K_{e},i}^{n}}-{H_{K,i}^{n}}\textbf{u}_{K,i}^{n}).\textbf{n}_{e,K}$.
Injecting this in (\ref{WL2}), we use Jensen's inequality to obtain:
\begin{equation}
\begin{split}\mathcal{W}_{\mathcal{E},K}\leq & {C_{{\boldsymbol{\mathcal{H}}}}}{\sum_{i=1}^{L}}\left(\dfrac{\Delta t}{m_{K}}{\sum_{e\in\partial K}}{{\delta}(H\textbf{u})_{e,i}^{n}}{m_{e}}\right)^{2}+{C_{{\boldsymbol{\mathcal{H}}}}}{\sum_{i=1}^{L}}\left(\dfrac{\Delta t}{m_{K}}{\sum_{e\in\partial K}}\Pi_{e,i}^{n}.\textbf{n}_{e,K}{m_{e}}\right)^{2}\\
\leq & {C_{{\boldsymbol{\mathcal{H}}}}}\left(\dfrac{\Delta t}{m_{K}}\right)^{2}{m_{\partial K}}{\sum_{i=1}^{L}}{\sum_{e\in\partial K}}({{\delta}(H\textbf{u})_{e,i}^{n}})^{2}{m_{e}}+{C_{{\boldsymbol{\mathcal{H}}}}}\left(\dfrac{\Delta t}{m_{K}}\right)^{2}{m_{\partial K}}{\sum_{i=1}^{L}}{\sum_{e\in\partial K}}(\Pi_{e,i}^{n}.\textbf{n}_{e,K})^{2}{m_{e}}\,,
\end{split}
\label{JensenP}
\end{equation}
and fall on the two remaining terms of the estimation.\end{proof}

\subsubsection{Total energy\label{TotalE}}

Let's now consider ${E^{n}}={\displaystyle \sum_{K\in\mathbb{T}}{m_{K}}\left({\mathcal{E}_{K}^{n}}/\varepsilon^{2}+{\displaystyle {\sum_{i=1}^{L}}{\mathcal{K}_{K,i}^{n}}}\right)}$
the discrete mechanical energy, and focus on the non-antisymmetric
terms. We first observe an exact balance between the terms (\ref{Qkk})
and (\ref{Qek}) arising from the kinetic and potential parts. In
consequence the effort is put on a simultaneous control of the terms
$\mathcal{R}$ and $\mathcal{A}$ appearing in the kinetic and potential
energy budgets.\\

\textbf{\textit{Estimate 1 :}}

We gather the contributions issuing from the estimations on the kinetic
and potential discrete energies, i.e. (\ref{Rkk}) and (\ref{Rek},
\ref{tRek}) respectively: 
\[
\begin{split}{m_{K}}{\sum_{i=1}^{L}}\mathcal{R}_{\mathcal{K},K,i} & =\left(\Delta t\right)^{2}{\sum_{i=1}^{L}}\left(\dfrac{({H_{K,i}^{n}})^{2}}{{H_{K,i}^{n+1}}}\dfrac{{m_{\partial K}}}{{m_{K}}}\right){\sum_{e\in\partial K}}\norm{\dfrac{{\boldsymbol{\delta}\Phi_{e,i}^{n}}}{\varepsilon^{2}}}^{2}{m_{e}}\,,\\
-{m_{K}}\mathcal{R}_{\mathcal{E},K}/\varepsilon^{2} & =-\Delta t{\sum_{i=1}^{L}}{\sum_{e\in\partial K}}\Pi_{e,i}^{n}.\dfrac{{\boldsymbol{\delta}\Phi_{e,i}^{n}}}{\varepsilon^{2}}{m_{e}}\,,\\
{m_{K}}\tilde{\mathcal{R}}_{\mathcal{E},K}/\varepsilon^{2} & =\left(\Delta t\right)^{2}{C_{{\boldsymbol{\mathcal{H}}}}}\left(\dfrac{{m_{\partial K}}}{{m_{K}}}\right){\sum_{i=1}^{L}}{\sum_{e\in\partial K}}\left(\dfrac{\Pi_{e,i}^{n}.\textbf{n}_{e,K}}{\varepsilon}\right)^{2}{m_{e}}\,.
\end{split}
\]
As a preliminary step, we define:
\begin{equation}
\widehat{H}_{K,i}^{n}:=\dfrac{({H_{K,i}^{n}})^{2}}{{H_{K,i}^{n+1}}}=H_{K,i}^{n}+\mathcal{O}(\Delta t)\,.\label{hath}
\end{equation}
 Then, using $\left(\dfrac{({H_{K,i}^{n}})^{2}}{{H_{K,i}^{n+1}}}\dfrac{{m_{\partial K}}}{{m_{K}}}\right)=2\left(\dfrac{\widehat{H}}{{\Delta}}\right)_{K,i}^{n}=\left(\left(\dfrac{\widehat{H}}{{\Delta}}\right)_{K,i}^{n}+\left(\dfrac{\widehat{H}}{{\Delta}}\right)_{K_{e},i}^{n}\right)+\left(\left(\dfrac{\widehat{H}}{{\Delta}}\right)_{K,i}^{n}-\left(\dfrac{\widehat{H}}{{\Delta}}\right)_{K_{e},i}^{n}\right)$,
we split the first contribution ${m_{K}}{\sum_{i=1}^{L}}\mathcal{R}_{\mathcal{K},K,i}$
in a sum of symmetric and antisymmetric parts: 
\[
\begin{split}{m_{K}}{\sum_{i=1}^{L}}\mathcal{R}_{\mathcal{K},K,i} & =\left(\Delta t\right)^{2}{\sum_{i=1}^{L}}{\sum_{e\in\partial K}}2\left(\dfrac{\widehat{H}}{{\Delta}}\right)_{e,i}^{n}\norm{\dfrac{{\boldsymbol{\delta}\Phi_{e,i}^{n}}}{\varepsilon^{2}}}^{2}{m_{e}}\,\\
 & +\left(\Delta t\right)^{2}{\sum_{i=1}^{L}}{\sum_{e\in\partial K}}\frac{1}{2}\left(\left(\dfrac{\widehat{H}}{{\Delta}}\right)_{K,i}^{n}-\left(\dfrac{\widehat{H}}{{\Delta}}\right)_{K_{e},i}^{n}\right)\norm{\dfrac{{\boldsymbol{\delta}\Phi_{e,i}^{n}}}{\varepsilon^{2}}}^{2}{m_{e}}\,.
\end{split}
\]
In a similar way, with $\dfrac{1}{\Delta_{K}}=\dfrac{1}{2}\left(\dfrac{1}{\Delta_{K}}+\dfrac{1}{\Delta_{K_{e}}}\right)+\dfrac{1}{2}\left(\dfrac{1}{\Delta_{K}}-\dfrac{1}{\Delta_{K_{e}}}\right)$,
the term ${m_{K}}\tilde{\mathcal{R}}_{\mathcal{E},K}/\varepsilon^{2}$
reads: 
\[
\begin{split}{m_{K}}\tilde{\mathcal{R}}_{\mathcal{E},K}/\varepsilon^{2} & =\left(\Delta t\right)^{2}{\sum_{i=1}^{L}}{\sum_{e\in\partial K}}\frac{{C_{{\boldsymbol{\mathcal{H}}}}}}{\Delta_{e}}\left(\dfrac{\Pi_{e,i}^{n}.\textbf{n}_{e,K}}{\varepsilon}\right)^{2}{m_{e}}\,\\
 & +\left(\Delta t\right)^{2}{\sum_{i=1}^{L}}{\sum_{e\in\partial K}}\frac{{C_{{\boldsymbol{\mathcal{H}}}}}}{2}\left(\frac{1}{\Delta_{K}}-\frac{1}{\Delta_{K_{e}}}\right)\left(\dfrac{\Pi_{e,i}^{n}.\textbf{n}_{e,K}}{\varepsilon}\right)^{2}{m_{e}}\,.
\end{split}
\]
Dropping the antisymmetric terms, which vanish after global summation,
we use (\ref{alpen}): 
\[
\Pi_{e,i}^{n}=\gamma\Delta t\left(\dfrac{\widehat{H}}{{\Delta}}\right)_{e,i}^{n}\dfrac{{\boldsymbol{\delta}\Phi_{e,i}^{n}}}{\varepsilon^{2}}\,\,\,,\,\,\gamma>0\,,
\]
to write the total contribution as: 
\begin{equation}
\begin{split}\sum_{K\in\mathbb{T}}{m_{K}}\left({\sum_{i=1}^{L}}\mathcal{R}_{\mathcal{K},K,i}\tilde{\mathcal{R}}_{\mathcal{E},K}/\varepsilon^{2}-\mathcal{R}_{\mathcal{E},K}/\varepsilon^{2}+\tilde{\mathcal{R}}_{\mathcal{E},K}/\varepsilon^{2}\right)\qquad\qquad\qquad\\
=\left(\Delta t\right)^{2}\sum_{K\in\mathbb{T}}{\sum_{i=1}^{L}}{\sum_{e\in\partial K}}\left[2+\gamma^{2}\left(\dfrac{\left(\Delta t\right)^{2}}{\varepsilon^{2}}\frac{{C_{{\boldsymbol{\mathcal{H}}}}}}{\Delta_{e}}\left(\dfrac{\widehat{H}}{{\Delta}}\right)_{e,i}^{n}\right)-\gamma\right] & \left(\dfrac{\widehat{H}}{{\Delta}}\right)_{e,i}^{n}\norm{\dfrac{{\boldsymbol{\delta}\Phi_{e,i}^{n}}}{\varepsilon^{2}}}^{2}{m_{e}}\,.
\end{split}
\label{gamma}
\end{equation}
Defining the quantity $\rho_{\varepsilon}$ such that: 
\begin{equation}
\rho_{\varepsilon}^{2}=2\dfrac{\left(\Delta t\right)^{2}}{\varepsilon^{2}}\frac{{C_{{\boldsymbol{\mathcal{H}}}}}}{\Delta_{e}}\left(\dfrac{\widehat{H}}{{\Delta}}\right)_{e,i}^{n}\,,\label{re}
\end{equation}
the negativity of (\ref{gamma}) reduces to: 
\begin{equation}
p(\gamma)=\frac{1}{2}\rho_{\varepsilon}^{2}\gamma^{2}-\gamma+2\leq0\,.\label{p}
\end{equation}
Based on the positivity of the discriminant (that is $\rho_{\varepsilon}\leq\dfrac{1}{2}$)
and the roots of $p$: $\gamma^{\pm}=\dfrac{1\pm\sqrt{1-4\rho_{\varepsilon}^{2}}}{\rho_{\varepsilon}^{2}}$,
one can establish that the value $\gamma=4$ ensures the negativity
of $p$.\\

\textbf{\textit{Estimate 2:}}

We consider the three remaining terms involved in the energy budget
(\ref{Akk}), (\ref{tAkk}) and (\ref{Aek}): 
\[
\begin{split}-{m_{K}}{\sum_{i=1}^{L}}\mathcal{A}_{\mathcal{K},K,i} & =-\Delta t{\sum_{i=1}^{L}}{\sum_{e\in\partial K}}\dfrac{{\Lambda_{e,i}^{n}}}{\varepsilon^{2}}{{\delta}(H\textbf{u})_{e,i}^{n}}{m_{e}}\,,\\
{m_{K}}{\sum_{i=1}^{L}}\tilde{\mathcal{A}}_{\mathcal{K},K,i} & =2\left(\Delta t\right)^{2}{\sum_{i=1}^{L}}\left(\dfrac{({H_{K,i}^{n}})^{2}}{{H_{K,i}^{n+1}}}\dfrac{{m_{\partial K}}}{{m_{K}}}\right){\sum_{e\in\partial K}}\left(\dfrac{{\Lambda_{e,i}^{n}}}{\varepsilon^{2}}\right)^{2}{m_{e}}\,,\\
{m_{K}}\mathcal{A}_{\mathcal{E},K}/\varepsilon^{2} & =\left(\Delta t\right)^{2}{C_{{\boldsymbol{\mathcal{H}}}}}\left(\dfrac{{m_{\partial K}}}{{m_{K}}}\right){\sum_{i=1}^{L}}{\sum_{e\in\partial K}}({{\delta}(H\textbf{u})_{e,i}^{n}}/\varepsilon)^{2}{m_{e}}\,.
\end{split}
\]
In the spirit of the previous analysis we decompose ${m_{K}}{\sum_{i=1}^{L}}\tilde{\mathcal{A}}_{\mathcal{K},K,i}$
and ${m_{K}}{\sum_{i=1}^{L}}\mathcal{A}_{\mathcal{E},K}$ as follows:
\[
\begin{split}{m_{K}}{\sum_{i=1}^{L}}\tilde{\mathcal{A}}_{\mathcal{K},K,i} & =4\left(\Delta t\right)^{2}{\sum_{i=1}^{L}}{\sum_{e\in\partial K}}\left(\dfrac{\widehat{H}}{{\Delta}}\right)_{e,i}^{n}\left(\dfrac{{\Lambda_{e,i}^{n}}}{\varepsilon^{2}}\right)^{2}{m_{e}}\,\\
 & +\left(\Delta t\right)^{2}{\sum_{i=1}^{L}}{\sum_{e\in\partial K}}\left(\left(\dfrac{\widehat{H}}{{\Delta}}\right)_{K,i}^{n}-\left(\dfrac{\widehat{H}}{{\Delta}}\right)_{K_{e},i}^{n}\right)\left(\dfrac{{\Lambda_{e,i}^{n}}}{\varepsilon^{2}}\right)^{2}{m_{e}}\,\\
{m_{K}}\mathcal{A}_{\mathcal{E},K}/\varepsilon^{2} & =\left(\Delta t\right)^{2}{\sum_{i=1}^{L}}{\sum_{e\in\partial K}}\frac{{C_{{\boldsymbol{\mathcal{H}}}}}}{\Delta_{e}}({{\delta}(H\textbf{u})_{e,i}^{n}}/\varepsilon)^{2}{m_{e}}\,\\
 & +\left(\Delta t\right)^{2}{\sum_{i=1}^{L}}{\sum_{e\in\partial K}}\frac{{C_{{\boldsymbol{\mathcal{H}}}}}}{2}\left(\frac{1}{\Delta_{K}}-\frac{1}{\Delta_{K_{e}}}\right)({{\delta}(H\textbf{u})_{e,i}^{n}}/\varepsilon)^{2}{m_{e}}\,.
\end{split}
\]
Again we neglect the antisymmetric terms, and consider (\ref{alpen}):
\[
{\Lambda_{e,i}^{n}}=\alpha{C_{{\boldsymbol{\mathcal{H}}}}}\Delta t\frac{{{\delta}(H\textbf{u})_{e,i}^{n}}}{\Delta_{e}}\quad,\quad\alpha>0\,.
\]
The total contribution attached to these terms becomes: 
\begin{equation}
\begin{split}\sum_{K\in\mathbb{T}}{\sum_{e\in\partial K}}{m_{K}}\left(-{\sum_{i=1}^{L}}\tilde{\mathcal{A}}_{\mathcal{K},K,i}+{\sum_{i=1}^{L}}\tilde{\mathcal{A}}_{\mathcal{K},K,i}+\mathcal{A}_{\mathcal{E},K}/\varepsilon^{2}\right)\qquad\qquad\\
=\left(\Delta t\right)^{2}\sum_{K\in\mathbb{T}}{\sum_{i=1}^{L}}{\sum_{e\in\partial K}}\left[-\alpha+\alpha^{2}\left(4\dfrac{\left(\Delta t\right)^{2}}{\varepsilon^{2}}\frac{{C_{{\boldsymbol{\mathcal{H}}}}}}{\Delta_{e}}\left(\dfrac{\widehat{H}}{{\Delta}}\right)_{e,i}^{n}\right)+1\right] & \frac{{C_{{\boldsymbol{\mathcal{H}}}}}}{\Delta_{e}}\left(\dfrac{{{\delta}(H\textbf{u})_{e,i}^{n}}}{\varepsilon}\right)^{2}{m_{e}}\,.
\end{split}
\label{alpha}
\end{equation}
Using the same notations as previously, we are this time left with
the study of the second-order polynomial: 
\begin{equation}
q(\alpha)=2\rho_{\varepsilon}^{2}\alpha^{2}-\alpha+1\leq0\,.\label{q}
\end{equation}
Supposing that $\rho_{\varepsilon}\leq\dfrac{1}{2\sqrt{2}}$, the
real roots are $\alpha^{\pm}=\dfrac{1\pm\sqrt{1-8\rho_{\varepsilon}^{2}}}{4\rho_{\varepsilon}^{2}}$,
from which we extract the value $\alpha=2$.

\subsection{Reformulation as convex combination of 1d schemes\label{subsec:Reformulation-as-convex}}

Following the ideas of \citep{Berthon2006} (see also \citep{Duran2015}
for an application to the Shallow Water equations), each cell $K$
is divided in a subgrid made of triangles $T_{K,e}$, connecting the
edges $e\in\partial K$ to the mass center of $K$ (see Fig.\ref{fig:convex}).
\begin{figure}[!tbh]
\begin{centering}
\includegraphics[width=0.35\textwidth]{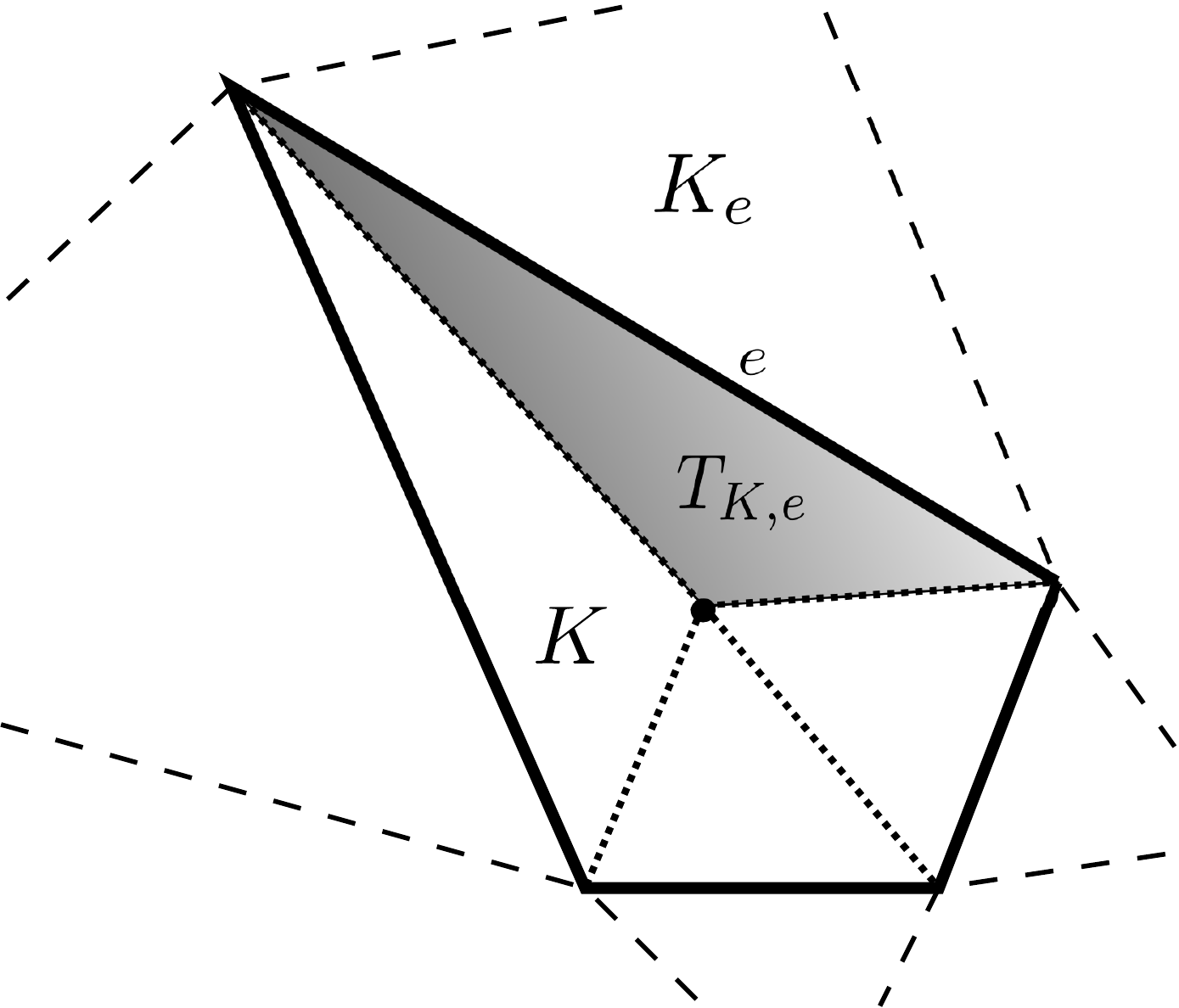}
\par\end{centering}
\caption{Mesh subgrid associated with an element $K$. Focus on the interface
$e$: the triangle $T_{K,e}$ connects $e$ to the mass center of
$K$. \label{fig:convex}}
\end{figure}
Gathering the discrete variables of the model in the vectors ${\mathbf{W}}_{K}$,
the mass and momentum fluxes involved in the scheme (\ref{mass},
\ref{mom}), together with the discrete gradient pressure, can be
reformulated in terms of functions of ${\mathbf{W}}_{K},\,{\mathbf{W}}_{K_{e}}$
and $n_{e,K}$ , through the following notations (we drop the subscript
\textit{``i''} to alleviate the notations):
\begin{align}
\mathcal{F}_{e}^{n}.\mathbf{n}_{e,K} & =\mathcal{F}({\mathbf{W}}_{K}^{n},\,{\mathbf{W}}_{K_{e}}^{n},\,{\mathbf{n}}_{e,K})\nonumber \\
\mathcal{G}_{e}^{n}.\mathbf{n}_{e,K} & =\mathcal{G}({\mathbf{W}}_{K}^{n},\,{\mathbf{W}}_{K_{e}}^{n},\,{\mathbf{n}}_{e,K})=\mathbf{u}_{K}^{n}\left(\mathcal{F}_{e}^{n}.\mathbf{n}_{e,K}\right)^{+}+\mathbf{u}_{K_{e}}^{n}\left(\mathcal{F}_{e}^{n}.\mathbf{n}_{e,K_{e}}\right)^{-}\qquad.\label{eq:fluxes}\\
\mathcal{P}_{e}^{n} & ={\Phi_{e}^{n,\ast}}{\mathbf{n}}_{e,K}=\mathcal{P}({\mathbf{W}}_{K}^{n},\,{\mathbf{W}}_{K_{e}}^{n},\,{\mathbf{n}}_{e,K})\,\nonumber 
\end{align}
Then, denoting $m_{T_{K,e}}$ the area of $T_{K,e}$, the scheme (\ref{mass},
\ref{mom}) can be written as a convex combination of one-dimensional
schemes:

\begin{subequations}
\begin{empheq}[left=\empheqlbrace,right=\text{\quad ,}]{align}
H_{K}^{n+1}& =  {\displaystyle \sum_{e \in \partial K } \dfrac{m_{T_{K,e}}}{m_K} H_{e}^{n+1} }\label{mass1d}
\\ 
H_{K}^{n+1} \mathbf{u}_{K}^{n+1}  &=  {\displaystyle  \sum_{e \in \partial K }  \dfrac{m_{T_{K,e}}}{m_K} H_{e}^{n+1} \mathbf{u}_{e}^{n+1}   }\,  \label{mom1d}
\end{empheq} \end{subequations}where we have introduced the auxiliary variables:

\begin{subequations}
\begin{empheq}[left=\empheqlbrace,right=\text{\quad ,}]{align}
H_{e}^{n+1}& =  H_{K}^{n}  -  {\displaystyle \dfrac{\Delta t}{\Delta x_e} 
\left[\mathcal{F}({\mathbf{W}}_{K},\,{\mathbf{W}}_{K_{e}},\,{\mathbf{n}}_{e,K})  -  \mathcal{F}({\mathbf{W}}_{K},\,{\mathbf{W}}_{K},\,{\mathbf{n}}_{e,K})  \right]}\label{mass1d}
\\ H_{e}^{n+1} \mathbf{u}_{e}^{n+1}  &=  H_{K}^{n} \mathbf{u}_{K}^{n}  -  {\displaystyle \dfrac{\Delta t}{\Delta x_e}
\left[\mathcal{G}({\mathbf{W}}_{K},\,{\mathbf{W}}_{K_{e}},\,{\mathbf{n}}_{e,K}) - \mathcal{G}({\mathbf{W}}_{K},\,{\mathbf{W}}_{K},\,{\mathbf{n}}_{e,K})\right]} \label{mom1d}
\\  &   \qquad \qquad - {\displaystyle \dfrac{\Delta t}{\Delta x_e} H_{K}^{n} \left[\mathcal{P}({\mathbf{W}}_{K},\,{\mathbf{W}}_{K_{e}},\,{\mathbf{n}}_{e,K}) - \mathcal{P}({\mathbf{W}}_{K},\,{\mathbf{W}}_{K},\,{\mathbf{n}}_{e,K}) \right] } \,\nonumber     \end{empheq}  \end{subequations} and the geometric constant $\Delta x_{e}=\dfrac{m_{T_{K,e}}}{m_{e}}$.

\subsection{Second-order extension\label{subsec:Second-order-extension}}

\subsubsection{MUSCL reconstructions\label{subsec:MUSCL}}

We consider in this work a monoslope second-order MUSCL scheme, which
consists of local linear reconstructions by computing a vectorial
slope $\left[\mathbf{\nabla}\mathbf{W}_{K}\right]_{m}$ in each cell
$K$ and for each primitive variable $m$, such that the two reconstructed
primitive variables vectors $\mathbf{W}_{e,K}$ and $\mathbf{W}_{e,K_{e}}$
are evaluated at each side of edge $e$ by:

\begin{equation}
\begin{array}{c}
\mathbf{W}_{e,K_{\phantom{e}}}=\mathbf{W}_{K_{\phantom{e}}}+\mathbf{\nabla}\mathbf{W}_{K_{\phantom{e}}}.\mathbf{x}_{K_{\phantom{e}}}\mathbf{x}_{e}\\
\mathbf{W}_{e,K_{e}}=\mathbf{W}_{K_{e}}+\mathbf{\nabla}\mathbf{W}_{K_{e}}.\mathbf{x}_{K_{e}}\mathbf{x}_{e}
\end{array}\,.\label{eq:muscl_recons_vector}
\end{equation}

These quantities are intended to replace the primitive variables in
the first-order scheme (Eqs.\ref{mass}-\ref{mom}-\ref{phiea}-\ref{fe})
to evaluate the numerical flux $\mathcal{F}_{e}^{n}$ and the pressure
$\Phi_{e}^{n,\ast}$ at the edge $e$. Classically, with such a linear
reconstruction, one can expect a scheme with a second-order accuracy
in space for sufficient regular solutions. To this end, a least square
method is employed to compute the vectorial slopes for each primitive
variable $h_{K}^{n}$, $u_{K}^{n}$ and $v_{K}^{n}$. More explicitly,
the following sums of squares

\begin{equation}
\begin{array}{c}
E_{m}\left(\left[\mathbf{\nabla}\mathbf{W}_{K}\right]_{m}\right)={\displaystyle \sum_{e\in\partial K}\left(\left[\mathbf{W}_{K_{e}}\right]_{m}-\left(\left[\mathbf{W}_{K}\right]_{m}+\left[\mathbf{\nabla}\mathbf{W}_{K}\right]_{m}.\mathbf{x}_{K}\mathbf{x}_{K_{e}}\right)\right)^{2}}\end{array}\,,\label{eq:ls_muscl}
\end{equation}

\noindent are minimized by setting the gradients to zero solution
of simple 2 x 2 linear systems. This method represents a good alternative
among others to find the hyperplane because of its accuracy and robustness,
independently from the number of neighbours. No limitation is imposed
to the computed vectorial slope because most of the numerical solutions
considered in this work are largely sufficiently regular and far from
wet/dry conditions to ensure numerical stability (except a Barth limiter
\citep{Barth2004} for the lake test case \S\ref{subsec:lake}).

\subsubsection{Second-order scheme\label{subsec:second-order-scheme}}

With the two reconstructed primitive variables vectors $\mathbf{W}_{e,K}^{n}$
and $\mathbf{W}_{e,K_{e}}^{n}$ at each side of the edge $e$, interface
terms are simply replaced in the original first-order scheme. In the
general $L$ layer case, and omitting the subscript ``i'' referring
to the layer numbering for the sake of clarity, this leads to the
scheme:

\begin{equation}
\left\{ \begin{array}{lcll}
H_{K}^{n+1} & = & H_{K}^{n} & -{\displaystyle \frac{\Delta t}{m_{K}}}{\displaystyle \sum_{e\in\partial K}}\left(\mathcal{F}_{e}^{n}.\mathbf{n}_{e,K}\right)m_{e}\\
\\
H_{K}^{n+1}\mathbf{u}_{K}^{n+1} & = & H_{K}^{n}\mathbf{u}_{K}^{n} & -{\displaystyle \frac{\Delta t}{m_{K}}}{\displaystyle \sum_{e\in\partial K}}\left({\color{red}{\normalcolor \mathbf{u}_{e,K}^{n}}}\left(\mathcal{F}_{e}^{n}.\mathbf{n}_{e,K}\right)^{+}+{\color{red}{\normalcolor \mathbf{u}_{e,K_{e}}^{n}}}\left(\mathcal{F}_{e}^{n}.\mathbf{n}_{e,K}\right)^{-}\right)m_{e}\\
\\
 &  &  & -{\displaystyle \frac{\Delta t}{m_{K}}}H_{K}^{n}{\displaystyle \sum_{e\in\partial K}}\left(\dfrac{\Phi_{e}^{n,\ast}}{\varepsilon^{2}}\,\mathbf{n}_{e,K}\right)m_{e}
\end{array}\right.\,,\label{eq:exp-scheme-1-1}
\end{equation}

with

\begin{equation}
\left\{ \begin{array}{l}
\mathcal{F}_{e}^{n}={\displaystyle \frac{{\color{red}{\normalcolor H_{e,K}^{n}}}{\color{red}{\normalcolor \mathbf{u}_{e,K}^{n}}}+{\color{red}{\normalcolor H_{e,K_{e}}^{n}}}{\color{red}{\normalcolor \mathbf{u}_{e,K_{e}}^{n}}}}{2}-\frac{\gamma\Delta t}{4}\left({\color{red}{\normalcolor H_{e,K}^{n}}}\frac{m_{\partial K}}{m_{K}}+{\color{red}{\normalcolor H_{e,K_{e}}^{n}}}\frac{m_{\partial K_{e}}}{m_{K_{e}}}\right)\left(\frac{{\color{red}{\normalcolor \Phi_{e,K_{e}}^{n}}}-{\color{red}{\normalcolor \Phi_{e,K}^{n}}}}{2\varepsilon^{2}}\right)\mathbf{n}_{e,K}}\\
\\
\Phi_{e}^{n,\ast}={\displaystyle \frac{{\color{red}{\normalcolor \Phi_{e,K}^{n}}}+{\color{red}{\normalcolor \Phi_{e,K_{e}}^{n}}}}{2}-\frac{\alpha\Delta t}{2}gL\left(\frac{m_{\partial K}}{m_{K}}+\frac{m_{\partial K_{e}}}{m_{K_{e}}}\right)\left(\dfrac{{\color{red}{\normalcolor H_{e,K_{e}}^{n}}}{\color{red}{\normalcolor \mathbf{u}_{e,K_{e}}^{n}}}-{\color{red}{\normalcolor H_{e,K}^{n}}}{\color{red}{\normalcolor \mathbf{u}_{e,K}^{n}}}}{2}\right).\mathbf{n}_{e,K}}
\end{array}\right.\,.\label{eq:exp-scheme-2-1}
\end{equation}

For $\mathcal{F}_{e}^{n}$ we use the fully explicit version of the
numerical fluxes, following comments of \S \ref{Robustness} and
Remark \ref{Explicit}. As concerns the corrected potential, $\Phi_{e}^{n,\ast}$,
to make things more concrete, the constant ${C_{{\boldsymbol{\mathcal{H}}}}}$
relying on the $L^{2}-$norm of $\mathcal{H}$ (\ref{Hess}) has been
roughly estimated by $gL/\rho$, $\rho$ standing for the density
of the considered layer. Of course, a more accurate estimate of $\vert\vert\vert{\boldsymbol{\mathcal{H}}}(\boldsymbol{H},\mathbf{x})\vert\vert\vert_{L^{2}}$
can be used, according to Remark \ref{Hess}, but this does not affect
the numerical results . All the vectorial slopes are first computed
and the reconstructed primitive variables ${\normalcolor h_{e,K}^{n}}$,
${\color{red}{\normalcolor \mathbf{u}_{e,K}^{n}}}$ are subsequently
extracted at each edge side. The numerical scheme can afterwards be
supplemented by a Heun scheme for time integration in order to derive
a full second-order scheme is space and time, stable under a classical
CFL number.

\subsubsection{Entropy stability of MUSCL extension\label{subsec:Formal-second-order}}

The following section is intented to give some insights into the general
strategy adopted to extend the energy dissipation to MUSCL schemes.
We consider the case of a regular cartesian mesh for the sake of simplicity,
and note $\Delta x$ the space step (meaning that $\ m_{e}=\Delta x\;\text{\,}\forall e\in\mathbb{F}\text{\,,}$
$\mathbb{F}$ collecting the edges of the mesh). Again for simplicity
reasons, we propose here a formal proof, in which the constants will
be generically denoted $C$. Note that we allow some of these constants
to imply several $L^{\infty}$ norms of the flow variables, which
is ultimately equivalent to suppose the water heights bounded and
far from zero. Following \citep{Vila1989}, and denotig $h$ a characteristic
length of the mesh, we proceed to a complementary restriction on the
reconstructed variables (\ref{eq:muscl_recons_vector}), assuming
\begin{equation}
\left\Vert \nabla\mathbf{W}_{K}^{n}\right\Vert <Ch^{1-r}\label{restriction}
\end{equation}
 with $0<r<1$, and $C>0$, in order to control the slope in the regions
close to discontinuities. Note that such a limitation does not occur
in smooth areas since we expect $\left\Vert \nabla\mathbf{W}_{K}^{n}\right\Vert <C.$
Let $\mathbf{V}=\mathbf{E}_{\mathbf{W}}(\mathbf{W})$ be the set of
entropy variables. Denoting $\mathbf{W}_{K}^{n}$ and $\mathbf{\bar{{W}}}_{K}^{n}$
the solutions of the MUSCL and first-order schemes respectively, and
according to the convexity of $\mathbf{E}_{\mathbf{}}$ , we have
the local estimation:
\[
\mathbf{E}_{K}^{n+1}\leq\mathbf{E}(\mathbf{\bar{{W}}}_{K}^{n+1})+\mathbf{V}_{K}^{n+1}.\left(\mathbf{W}_{K}^{n+1}-\mathbf{\bar{{W}}}_{K}^{n+1}\right)\text{\,}.
\]
Formally, according to (\ref{Strict_decrease}), we can find a constant
$C>0$ such that :

\begin{equation}
\mathbf{E}^{n+1}+C\left(\Delta t\right)^{2}\sum_{K\in\mathbb{T},e\in\partial K}\left(\left\Vert \delta\Phi_{e}^{n}\right\Vert ^{2}+\left\Vert \delta H\mathbf{u}_{e}^{n}\right\Vert ^{2}\right)\leq\mathbf{E}^{n}+\left(\Delta x\right)^{2}\sum_{K\in\mathbb{T}}\mathbf{V}_{K}^{n+1}.\left(\mathbf{W}_{K}^{n+1}-\mathbf{\bar{{W}}}_{K}^{n+1}\right)\text{\,}.\label{eq:second_order_energy}
\end{equation}
We express the difference between the second and first-order solutions
at time $n+1$ as:

\begin{equation}
\mathbf{W}_{K}^{n+1}-\mathbf{\bar{{W}}}_{K}^{n+1}\text{=\ensuremath{\left(\begin{array}{c}
 \frac{\Delta t}{\Delta x}\sum_{e\in\partial K}\delta\mathcal{F}_{e,K}^{n}\\
 \frac{\Delta t}{\Delta x}\sum_{e\in\partial K}\delta\mathcal{G}_{e,K}^{n}+\frac{\Delta t}{\Delta x}H_{K}^{n}\sum_{e\in\partial K}\delta\mathcal{P}_{e,K}^{n} 
\end{array}\right)}\,},\label{Wdelta}
\end{equation}
where
\begin{align*}
\delta\mathcal{F}_{e,K}^{n} & =\mathcal{F}({\mathbf{W}}_{e,K}^{n},\,{\mathbf{W}}_{e,K_{e}}^{n},\,{\mathbf{n}}_{e,K})-\mathcal{F}({\mathbf{W}}_{K}^{n},\,{\mathbf{W}}_{K_{e}}^{n},\,{\mathbf{n}}_{e,K})\\
\delta\mathcal{G}_{e,K}^{n} & =\mathcal{G}({\mathbf{W}}_{e,K}^{n},\,{\mathbf{W}}_{e,K_{e}}^{n},\,{\mathbf{n}}_{e,K})-\mathcal{G}({\mathbf{W}}_{K}^{n},\,{\mathbf{W}}_{K_{e}}^{n},\,{\mathbf{n}}_{e,K})\qquad,\\
\delta\mathcal{P}_{e,K}^{n} & =\mathcal{P}({\mathbf{W}}_{e,K}^{n},\,{\mathbf{W}}_{e,K_{e}}^{n},\,{\mathbf{n}}_{e,K})-\mathcal{P}({\mathbf{W}}_{K}^{n},\,{\mathbf{W}}_{K_{e}}^{n},\,{\mathbf{n}}_{e,K})\,
\end{align*}
using the notations introduced in (\ref{eq:fluxes}). We hence have:
\begin{align}
\left(\Delta x\right)^{2}\sum_{K\in\mathbb{T}}\mathbf{V}_{K}^{n+1}.\left(\mathbf{W}_{K}^{n+1}-\mathbf{\bar{{W}}}_{K}^{n+1}\right) & =\Delta t\Delta x\sum_{e\in\mathbb{F}}\delta\mathcal{F}_{e,K}^{n}\left(\mathbf{V}_{K}^{n+1}-\mathbf{V}_{K_{e}}^{n+1}\right)_{H}\nonumber \\
 & \,+\Delta t\Delta x\sum_{e\in\mathbb{F}}\delta\mathcal{G}_{e,K}^{n}.\left(\mathbf{V}_{K}^{n+1}-\mathbf{V}_{K_{e}}^{n+1}\right)_{H\mathbf{u}}\label{scalarV}\\
 & \,+\Delta t\Delta x\sum_{e\in\mathbb{F}}\delta\mathcal{P}_{e,K}^{n}.\left(\mathbf{V}_{K}^{n+1}H_{K}^{n}-\mathbf{V}_{K_{e}}^{n+1}H_{K_{e}}^{n}\right)_{H\mathbf{u}}.\nonumber 
\end{align}
We then write:
\begin{align}
\left\Vert \mathbf{V}_{K}^{n+1}-\mathbf{V}_{K_{e}}^{n+1}\right\Vert  & =\left\Vert \mathbf{V}\left(\mathbf{\bar{{W}}}_{K}^{n+1}+\mathbf{W}_{K}^{n+1}-\mathbf{\bar{{W}}}_{K}^{n+1}\right)-\mathbf{V}\left(\mathbf{\bar{{W}}}_{K_{e}}^{n+1}+\mathbf{W}_{K_{e}}^{n+1}-\mathbf{\bar{{W}}}_{K_{e}}^{n+1}\right)\right\Vert \nonumber \\
 & =\left\Vert \mathbf{V}\left(\mathbf{W}_{K}^{n}-\mathcal{A}_{K}^{n}+\mathbf{W}_{K}^{n+1}-\mathbf{\bar{{W}}}_{K}^{n+1}\right)-\mathbf{V}\left(\mathbf{\bar{{W}}}_{K_{e}}^{n}-\mathcal{A}_{K_{e}}^{n}+\mathbf{W}_{K_{e}}^{n+1}-\mathbf{\bar{{W}}}_{K_{e}}^{n+1}\right)\right\Vert \quad,\label{Vkn}
\end{align}
where the terms $\mathcal{A}_{K}^{n}=\mathbf{\bar{{W}}}_{K}^{n+1}-\mathbf{W}_{K}^{n}$
are given by the first-order scheme (\ref{mass}, \ref{mom}). Considering
that each quantity $\delta\mathcal{F}_{e,K}^{n}$ , $\delta\mathcal{G}_{e,K}^{n}$
and $\delta\mathcal{P}_{e,K}^{n}$ appearing in (\ref{Wdelta}) can
be expressed, by construction, in terms of components of $\delta\mathbf{W}_{e,K}^{n}=\mathbf{W}_{e,K}^{n}-\mathbf{W}_{K}^{n}$,
the limitation (\ref{restriction}) gives, using (\ref{eq:muscl_recons_vector}):
\begin{align*}
\max\left(\left\Vert \delta\mathcal{F}_{e,K}^{n}\right\Vert ,\left\Vert \delta\mathcal{G}_{e,K}^{n}\right\Vert ,\left\Vert \delta\mathcal{H}_{e,K}^{n}\right\Vert \right)\leq Ch^{r}\,,
\end{align*}
and therefore

\begin{align*}
\left\Vert \mathbf{W}_{K}^{n+1}-\mathbf{\bar{{W}}}_{K}^{n+1}\right\Vert \leq C\frac{\Delta t}{\Delta x}h^{r}\,.
\end{align*}
Reformulating the first-order scheme (\ref{mass}, \ref{mom}), one
can establish that a similar estimation stands for the terms $\mathcal{A}_{K}^{n}$.
By continuity arguments in (\ref{Vkn}), this finally gives:
\begin{align*}
\left\Vert \mathbf{V}_{K}^{n+1}-\mathbf{V}_{K_{e}}^{n+1}\right\Vert \leq C\left(\left\Vert \mathbf{W}_{K}^{n}-\mathbf{W}_{K_{e}}^{n}\right\Vert +C\frac{\Delta t}{\Delta x}h^{r}\right)\,.
\end{align*}
Using this estimation to control the terms appearing in the right
hand side of (\ref{scalarV}), going back to (\ref{eq:second_order_energy})
we finally get:

\begin{equation}
\mathbf{E}^{n+1}+C\left(\Delta t\right)^{2}\sum_{K,e}\left(\left\Vert \delta\Phi_{e}^{n}\right\Vert ^{2}+\left\Vert \delta H\mathbf{u}_{e}^{n}\right\Vert ^{2}\right)\leq\mathbf{E}^{n}+\Delta t\Delta x\sum_{e\in\mathbb{F}}Ch^{r}\left(\left\Vert \mathbf{W}_{K}^{n}-\mathbf{W}_{K_{e}}^{n}\right\Vert +C\frac{\Delta t}{\Delta x}h^{r}\right)\text{\,}.\label{eq:second_order_energy-1}
\end{equation}
Noting that we have an estimation of the form

\begin{align*}
\left\Vert \mathbf{W}_{K}^{n}-\mathbf{W}_{K_{e}}^{n}\right\Vert ^{2}\leq C\left(\left\Vert \delta\Phi_{e}^{n}\right\Vert ^{2}+\left\Vert \delta H\mathbf{u}_{e}^{n}\right\Vert ^{2}\right)\,,
\end{align*}
we write:
\[
\sum_{e\in\mathbb{F}}\Delta t\Delta x\left\Vert \mathbf{W}_{K}^{n}-\mathbf{W}_{K_{e}}^{n}\right\Vert \leq C\left(\sum_{e\in\mathbb{F}}\Delta t\left(\Delta x\right)^{2}\right)^{\frac{1}{2}}\left(\sum_{e\in\mathbb{F}}\Delta t\left(\left\Vert \delta\Phi_{e}^{n}\right\Vert ^{2}+\left\Vert \delta H\mathbf{u}_{e}^{n}\right\Vert ^{2}\right)\right)^{\frac{1}{2}}\,.
\]
Setting $M^{2}=\Delta t\sum_{K,e}\left(\left\Vert \delta\Phi_{e}^{n}\right\Vert ^{2}+\left\Vert \delta H\mathbf{u}_{e}^{n}\right\Vert ^{2}\right)$,
(\ref{eq:second_order_energy-1}) gives:

\begin{equation}
\mathbf{E}^{n+1}+C\Delta tM^{2}\leq\mathbf{E}^{n}+C\left(\frac{\Delta t}{\Delta x}\right)^{2}h^{2r}+C\sqrt{\Delta t}Mh^{r}\text{\,}.\label{eq:second_order_energy-2}
\end{equation}
With $\Delta t=\Delta x=h$:
\begin{equation}
\frac{\mathbf{E}^{n+1}-\mathbf{E}^{n}}{\Delta t}\leq Ch^{2r-1}+CMh^{r-1/2}-CM^{2}\text{\,}.\label{eq:second_order_energy-2-1}
\end{equation}
A trivial analysis of the quadratic polynomial in $M$ of the right
hand side leads to a condition of the form $\alpha(h)\leq C$, where
$\alpha(h)$ is $\underset{}{\mathcal{O}}(h^{2r-1},h^{r-1/2})$, leading
to the condition $r>1/2$.

\subsection{Time stepping for Coriolis force\label{subsec:time-stepping-coriolis}}

It has been demonstrated that under inequalities conditions on $\gamma$
and $\alpha$, the first-order scheme given by (\ref{mass}- \ref{mom}-\ref{phiea}-\ref{fe})
dissipates mechanical energy. This property has also been highlighted
for the second-order scheme (\ref{eq:exp-scheme-1-1}) and (\ref{eq:exp-scheme-2-1}),
at least numerically, in \S \ref{subsec:linear-waves}. The proposed
approach to incorporate the Coriolis force is designed to preserve
at best these stability properties. From this perspective, a time
stepping scheme is considered to integrate the following ordinary
differential equations~:

\begin{equation}
{\displaystyle \frac{\partial}{\partial t}\left(\begin{array}{c}
u\\
v
\end{array}\right)=f\left(\begin{array}{cc}
0 & 1\\
-1 & 0
\end{array}\right)}\left(\begin{array}{c}
u\\
v
\end{array}\right)\,.\label{eq:ode-coriolis}
\end{equation}

Among the desired stability properties, one asks the numerical approach
to be a symplectic integrator and to preserve kinetic energy, i.e.
$\left\Vert \mathbf{u}\right\Vert ^{n+1}=\left\Vert \mathbf{u}\right\Vert ^{n}$.
A first way to proceed is to consider the exact integration of the
previous ordinary differential equations (\ref{eq:ode-coriolis}),
resulting to the scheme:

\begin{equation}
\left\{ \begin{array}{cclcc}
u^{n+1} & = & \cos\left(f\Delta t^{n}\:u^{n}\right) & + & \sin\left(f\Delta t^{n}\:v^{n}\right)\\
\\
v^{n+1} & = & \cos\left(f\Delta t^{n}\:v^{n}\right) & - & \sin\left(f\Delta t^{n}\:u^{n}\right)
\end{array}\right.\,.\label{eq:scheme-cor-rot}
\end{equation}

Another way is to consider the Crank-Nicolson scheme:

\begin{equation}
\left\{ \begin{array}{cclc}
u^{n+1} & = &  & {\displaystyle \frac{f\Delta t^{n}}{2}\left(v^{n}+v^{n+1}\right)}\\
\\
v^{n+1} & = & - & {\displaystyle \frac{f\Delta t^{n}}{2}\left(u^{n}+u^{n+1}\right)}
\end{array}\right.\,.\label{eq:scheme-cor-CN}
\end{equation}

It has been found by numerical experience that the last scheme (\ref{eq:scheme-cor-CN})
with an IMEX time steeping scheme H-CN(2,2,2) defined below in Tab.\ref{tab:HCN222}
by his Butcher tableau is globally dissipative for long time simulations.

\begin{table}[H]
\centering{}%
\begin{tabular}{cc}
\begin{tabular}{c|cc}
$0$  & $0$  & $0$\tabularnewline
$1$  & 1  & $0$\tabularnewline
\hline 
 & $1/2$  & $1/2$\tabularnewline
\end{tabular}\quad{}  & \quad{}%
\begin{tabular}{c|cc}
$0$  & $0$  & $0$\tabularnewline
$1$  & 1/2  & $1/2$\tabularnewline
\hline 
 & $1/2$  & $1/2$\tabularnewline
\end{tabular}\tabularnewline
\end{tabular}\caption{Second-order IMEX scheme H-CN(2,2,2) with an explicit Heun scheme
for the model without Coriolis force and a Crank-Nicolson scheme for
the Coriolis force.\label{tab:HCN222}}
\end{table}
The above IMEX time steeping can be written for numerical implementation
purpose as follows:

\begin{equation}
\begin{array}{lcl}
\mathbf{U}_{K}^{(1)} & = & \mathbf{U}_{K}^{n}\:+\:\Delta t^{n}\:\mathcal{L}(\mathbf{U}_{K}^{(1)})\\
\mathbf{U}_{K}^{(2)} & = & \mathbf{U}_{K}^{(1)}\:+\:{\displaystyle \frac{\Delta t^{n}}{2}}\mathcal{C}(\mathbf{U}_{K}^{n})\:+\:{\displaystyle \frac{\Delta t^{n}}{2}}\mathcal{C}(\mathbf{U}_{K}^{(2)})\\
\mathbf{U}_{K}^{(3)} & = & \mathbf{U}_{K}^{(2)}\:+\:\Delta t^{n}\:\mathcal{L}(\mathbf{U}_{K}^{(2)})\\
\mathbf{U}_{K}^{n+1} & = & {\displaystyle \frac{1}{2}}\left(\mathbf{U}_{K}^{n}-\mathbf{U}_{K}^{(1)}+\mathbf{U}_{K}^{(2)}+\mathbf{U}_{K}^{(3)}\right)
\end{array}\,,\label{eq:HCN222}
\end{equation}
where $\mathcal{L}$ is the numerical space integration of the homogeneous
model (corresponding to Eqs.\ref{eq:exp-scheme-1-1}-\ref{eq:exp-scheme-2-1})
and $\mathcal{C}$ is the operator corresponding to the Coriolis force:

\begin{equation}
\mathcal{C}(\mathbf{U}_{i})=\begin{bmatrix}0\\
\phantom{-}f\:h_{i}u_{i}\\
-f\:h_{i}v_{i}
\end{bmatrix}\,.
\end{equation}

As it can be observed in Fig.\ref{fig:vortex-conv} for the long time
simulations of the baroclinic vortex, the mechanical energy is effectively
dissipated using this time stepping scheme. These energy losses gradually
become less important as the mesh resolution increases.

\subsection{Time step\label{subsec:time-step}}

Based on (\ref{CFL_1}), the numerical CFL-like condition for the
time step $\Delta t^{n}$ for all the two-dimensional simulations
presented in this article is:

\begin{equation}
\Delta t^{n}=\tau_{CFL}\;\min_{K\in\Omega}{\displaystyle \left(\frac{2\;m_{K}}{m_{\partial K}\left(\left\Vert \bar{\mathbf{u}}_{K}^{n}\right\Vert +\sqrt{g\bar{h}_{K}^{n}}\right)}\right)}\,,\label{eq:time-step}
\end{equation}
where $\tau_{CFL}$ is the CFL number, $\bar{h}_{K}^{n}$ is the total
water depth and $\left\Vert \bar{\mathbf{u}}_{K}^{n}\right\Vert $
is the mean velocity, computed from:

\begin{equation}
\left\{ \begin{array}{l}
\bar{h}_{K}^{n}={\displaystyle \sum_{i=1}^{L}h_{K,i}^{n}}\\
\\
\left\Vert \bar{\mathbf{u}}_{K}^{n}\right\Vert ={\displaystyle \frac{1}{\bar{h}_{K}^{n}}}\sqrt{\left({\displaystyle \sum_{i=1}^{L}h_{K,i}^{n}}u_{K,i}^{n}\right)^{2}+\left({\displaystyle \sum_{i=1}^{L}h_{K,i}^{n}}v_{K,i}^{n}\right)^{2}}
\end{array}\right.\,.
\end{equation}
The time step is thus calibrated on the barotropic gravity wave.


\bibliographystyle{plain}
\phantomsection\addcontentsline{toc}{section}{\refname}\bibliography{biblio_from_zotero}

\end{document}